\numberwithin{equation}{section}
\newtheorem{thm}{Theorem}[section]
\newtheorem{prop}[thm]{Proposition}
\newtheorem{lem}[thm]{Lemma}
\theoremstyle{definition}
\newtheorem{defn}[thm]{Definition}
\newtheorem{rem}[thm]{Remark}
\newcommand{\Z}{\mathbb{Z}}
\newcommand{\R}{\mathbb{R}}
\newcommand{\G}{\mathbb{G}}
\newcommand{\dvg}{\mathrm{div}\,}
\newcommand{\w}{W^{1,p}_0(\Omega)}
\renewcommand{\rho}{\varrho}
\renewcommand{\theta}{\vartheta}
\newcommand{\idx}[1]{\operatorname{\mathrm{Index}}\left(#1\right)}
\begin{document}


\title[Amann-Zehnder type results for $p$-Laplace 
problems]{Amann-Zehnder type results for $p$-Laplace problems}

\author{Silvia Cingolani}
\address{Dipartimento di Meccanica, Matematica e Management\\
         Politecnico di Bari\\
         Via Ora\-bo\-na 4\\
         70125 Bari, Italy}
\email{silvia.cingolani@poliba.it}
\thanks{The research of the authors was partially supported by 
        Gruppo Nazionale per l'Analisi Matematica, la Probabilit\`a
				e le loro Applicazioni (INdAM). 
				In particular S.~Cingolani is supported
        by G.N.A.M.P.A. Project 2016
        ``Studio variazionale di fenomeni fisici non lineari''.}
\author{Marco Degiovanni}
\address{Dipartimento di Matematica e Fisica\\
         Universit\`a Cattolica del Sacro Cuore\\
         Via dei Musei 41\\
         25121 Bre\-scia, Italy}
\email{marco.degiovanni@unicatt.it}
\author{Giuseppina Vannella}
\address{Dipartimento di Meccanica, Matematica e Management \\
         Politecnico di Bari\\
         Via Ora\-bo\-na 4\\
         70125 Bari, Italy}
\email{giuseppina.vannella@poliba.it}

\keywords{$p$-Laplace operator, $p$-area functional, 
nontrivial solutions, Morse theory, critical groups,
functionals with lack of smoothness}

\subjclass[2010]{35J62, 35J92, 58E05}



%
\begin{abstract}
The existence of a nontrivial solution is proved for a class of
quasilinear elliptic equations involving, as principal part, 
either the $p$-Laplace operator or the operator related to the
$p$-area functional, and a nonlinearity with $p$-linear growth at 
infinity.
To this aim, Morse theory techniques are combined with
critical groups estimates.
\end{abstract}
\maketitle 


\section{Introduction}
In 1980,  Amann and Zehnder \cite{amann_zehnder} studied the
asymptotically linear elliptic problem
\begin{equation}
\label{eq:semi} 
\left\{
\begin{array}{ll}
- \, \Delta  u = g(u)
& \qquad\text{in $\Omega$} \,, \\
\noalign{\medskip}
u=0
& \qquad\text{on $\partial\Omega$} \,,
\end{array}
\right.
\end{equation}
where  $\Omega$ is a bounded domain in $\R^N$ with smooth
boundary, $g:\R\rightarrow\R$ is a $C^1$-function such that 
$g(0)=0$ and there exists $\lambda\in\R$ such that
\[
\lim_{|s|\to\infty} g'(s) = \lambda\,.
\]
They proved that problem~\eqref{eq:semi} admits a nontrivial 
solution $u$, supposing that $\lambda$ is not an eigenvalue of 
$-\Delta$, the so-called nonresonance condition at infinity, and 
that there exists some eigenvalue of $-\Delta$ between $\lambda$ 
and $g'(0)$.
The same result was obtained by Chang~\cite{chang1981-cpam} in 
1981, using Morse theory for manifolds with boundary, and by  
Lazer and Solimini~\cite{lazer_solimini1988} in 1988, combining 
mini-max characterization of the critical point and Morse index 
estimates.
More precisely, the basic idea in \cite{lazer_solimini1988} is to 
recognize that the energy functional associated to the 
asymptotically linear problem~\eqref{eq:semi} has a saddle geometry, 
which implies that a suitable Poincar\'e polynomial is not trivial, 
and also to show that a certain critical group at zero is trivial, 
to ensure the existence of a solution $u \neq 0$ of~\eqref{eq:semi}.
\par
In the present work, we are interested in finding nontrivial 
solutions~$u$ for the quasilinear elliptic problem
\begin{equation}
\label{eq:quasmod} 
\left\{
\begin{array}{ll}
- \, \dvg\left[\left(\kappa^2+
|\nabla u|^2\right)^{\frac{p-2}2} \nabla u\right] = g(u)
& \qquad\text{in $\Omega$} \,, \\
\noalign{\medskip}
u=0
& \qquad\text{on $\partial\Omega$} \,,
\end{array}
\right.
\end{equation}
where $\Omega$ is a bounded open subset of $\R^N$, $N\geq 1$,
with $\partial\Omega$ of class $C^{1,\alpha}$ for some
$\alpha\in]0,1]$, while $\kappa\geq 0$, $p >1$ are real numbers, 
and $g:\R\rightarrow\R$ is a $C^1$-function such that:
\begin{itemize}
\item[$(a)$]
$g(0)=0$ and there exists $\lambda\in\R$ such that
\[
\lim_{|s|\to\infty} \frac{g(s)}{|s|^{p-2}s} = \lambda\,.
\]
\end{itemize}
About the principal part of the equation, the reference cases
are $\kappa=0$, which yields the \emph{$p$-Laplace operator},
and $\kappa=1$, which yields the operator related to the 
\emph{$p$-area functional}.
In the case $p=2$ the value of $\kappa$ is irrelevant.
\par
It is standard that weak solutions $u$ of~\eqref{eq:quasmod} 
correspond to critical points of the
$C^1$-functional $f:W^{1,p}_0(\Omega)\rightarrow\R$ defined as
\begin{equation}
\label{eq:fmod} 
f(u) = 
\int_{\Omega} \Psi_{p,\kappa}(\nabla u)\,dx
- \int_{\Omega} G(u)\,dx \,,
\end{equation}
where
\[
\Psi_{p,\kappa}(\xi) = \frac{1}{p}\,
\left[\left(\kappa^2 + |\xi|^2\right)^{\frac{p}{2}}
- \kappa^p\right]\,,
\qquad G(s)= \int_0^s g(t) dt.
\]\par
With reference to the approach of~\cite{lazer_solimini1988}, 
when $p \neq 2$ the new difficulties that one has to face 
are related to both the main ingredients of the argument, 
namely to recognize a saddle structure, with a related 
information on a suitable Poincar\'e polynomial, and to 
provide an estimate of the critical groups at zero by some 
Hessian type notion.
\par
Concerning the first aspect, the spectral properties of 
$-\Delta_p$ are not yet well understood. 
We say that the real number $\lambda$ is an eigenvalue of 
$-\Delta_p$ if the equation $-\Delta_p u = \lambda |u|^{p-2}u$ 
admits a nontrivial solution $u \in \w$ and we denote by 
$\sigma(-\Delta_p)$ the set of such eigenvalues.
It is known that there exists a first eigenvalue $\lambda_1>0$, 
which is simple, and a second eigenvalue $\lambda_2 >\lambda_1$, 
both possessing several equivalent characterizations
(see~\cite{anane1987, anane_tsuoli1996, cuesta2001, 
drabek_robinson1999, lindqvist1990}).
Moreover, one can define in at least three different ways a 
diverging sequence $(\lambda_m)$ of eigenvalues of $-\Delta_p$
(see~\cite{cingolani_degiovanni2005, drabek_robinson1999, 
perera2003-tmna}),
but it is not known if they agree for $m \geq 3$ and if  
the whole set $\sigma(-\Delta_p)$ is covered.
Therefore it is not standard to recognize a saddle type geometry 
for the energy functional associated to the quasilinear problem.
\par
On the other hand, for functionals defined on Banach spaces, 
serious difficulties arise in extending Morse theory
(see~\cite{uhlenbeck1972, tromba1977, chang1983-cam, chang1993,
chang1998}).
More precisely, by standard deformation results, which hold
also in general Banach spaces, one can prove the so-called
Morse relations, which can be written as
\[
\sum_{m=0}^{\infty} C_m t^m =
\sum_{m=0}^{\infty} \beta_m t^m + (1+t) Q(t)\,,
\]
where $(\beta_m)$ is the sequence of the Betti numbers of
a pair of sublevels $(\{f\leq b\},\{f < a\})$ and $(C_m)$
is a sequence related to the critical groups  of the
critical points $u$ of $f$ with $a\leq f(u) \leq b$
(see e.g. the next Definition~\ref{critgroups} 
and~\cite[Theorem~I.4.3]{chang1993}).
The problem, in the extension from Hilbert to Banach spaces,
concerns the estimate of $(C_m)$, hence of critical groups,
by the Hessian of $f$ or some related concept.
In a Hilbert setting, the classical Morse lemma and
the generalized Morse lemma~\cite{gromoll_meyer1969-t}
provide a satisfactory answer.
For Banach spaces, a similar general result is so far
not known, also due to the lack of Fredholm properties of 
the second derivative of the functional.
\par
The first difficulty has been overcome by the first two authors
in~\cite{cingolani_degiovanni2005} for a problem quite 
similar to~\eqref{eq:quasmod}.
By generalizing from~\cite{chang1993} the notion of homological 
linking, in~\cite[Theorem 3.6]{cingolani_degiovanni2005} an 
abstract result has been proved which allows to produce a pair 
of sublevels $(\{f\leq b\},\{f < a\})$ with a nontrivial homology
group. 
In order to describe its dimension in terms of $\lambda$
in the setting of problem~\eqref{eq:quasmod},
it is then convenient to set, whenever $m\geq 1$,
\[
\lambda_m = \inf \left\{ \sup_A \mathcal{E}:\,
\text{$A \subseteq M$, $A$ is symmetric and
$\idx{A}\geq m$} \right\} \,,
\]
where
\[
M = \left\{u \in W^{1,p}_0(\Omega):\, 
\int_{\Omega} |u|^p\,dx= 1\right\}\,,\qquad 
\mathcal{E}(u) = \int_{\Omega} |\nabla u|^p \,dx
\]
and $\mathrm{Index}$ denotes the $\Z_2$-cohomological index
of Fadell and Rabinowitz~\cite{fadell_rabinowitz1977,
fadell_rabinowitz1978}.
For a matter of convenience, we also set $\lambda_0=-\infty$.
It is well known that $(\lambda_m)$ is a nondecreasing divergent 
sequence.
The arguments of~\cite{cingolani_degiovanni2005} 
apply for any $p>1$.
\par
For the second difficulty, the value of $p$ becomes relevant.
In~\cite{cingolani_vannella2003-aihp} the first and the last 
author have proved, for $p>2$ and $\kappa>0$, an extension of the 
Morse Lemma and established a connection between the critical 
groups and the Morse index, taking advantage of the fact that, 
under suitable assumptions on $g$, the functional $f$ is 
actually of class $C^2$ on $W^{1,p}_0(\Omega)$ and that
\[
\Psi''_{p,\kappa}(\eta)[\xi]^2 \geq \nu_{p,\kappa}|\xi|^2
\qquad\text{with $\nu_{p,\kappa}>0$}\,.
\]
Related results in the line of Morse theory have been proved by 
the first and the last author, in the case $p>2$, in 
\cite{cingolani_vannella2003-bergamo, cingolani_vannella2006, 
cingolani_vannella2007}.
By means of the results of~\cite{cingolani_vannella2007},
an Amann-Zehnder type result has been proved 
in~\cite{cingolani_degiovanni2005} for a problem quite 
similar to~\eqref{eq:quasmod}, provided that $p>2$.
\par\smallskip
In this work we are first of all interested in a
corresponding result in the case $1<p<2$, which amounts
to establish a relation between critical groups and Hessian type 
notions also in this case.
Since our argument recovers also the case $p\geq 2$ with
less assumptions on $g$, we provide an Amann-Zehnder type  
result for any $p>1$.
\par
Let us point out that, if $1<p<2$, the functional $f$ is not 
of class $C^2$ on $W^{1,p}_0(\Omega)$.
For $\kappa=0$, even the function $\Psi_{p,\kappa}$ is
not of class $C^2$ on $\R^N$.
\par
If $\kappa>0$ or $p \geq 2$, let us denote by $m(f,0)$ the 
supremum of the dimensions of the linear subspaces where the 
quadratic form $Q_0:W^{1,2}_0(\Omega) \rightarrow\R$ defined as
\[
Q_0(u) =  
\begin{cases}
\displaystyle{
\kappa^{p-2}\,\int_{\Omega} |\nabla u|^2 \, dx
- g'(0) \, \int_{\Omega} u^2 \, dx}
&\text{if $\kappa>0$ or $p>2$}\,,\\
\noalign{\medskip}
\displaystyle{
\int_{\Omega} |\nabla u|^2 \, dx
- g'(0) \, \int_{\Omega} u^2 \, dx}
&\text{if $\kappa=0$ and $p=2$}\,,
\end{cases}
\]
is negative definite. 
Let us also denote by $m^*(f,0)$ the supremum of the dimensions 
of the linear subspaces where the quadratic form $Q_0$ is 
negative semidefinite.
If $\kappa=0$ and $1<p<2$, we set $m(f,0)=m^*(f,0)=0$.
\par
Our first result is the following:
\begin{thm}
\label{thm:nonres} 
Assume $1<p<\infty$, $\kappa\geq 0$ and hypothesis $(a)$ on $g$.	
Suppose also that  $\lambda\not\in\sigma(-\Delta_p)$ and
denote by $m_{\infty}$ the integer such that
$\lambda_{m_{\infty}} < \lambda < \lambda_{m_{\infty}+1}$.
\par
If 
\[	
m_{\infty} \not\in [m(f,0),m^*(f,0)]\,,
\]
then there exists a nontrivial solution $u$
of~\eqref{eq:quasmod}.
\end{thm}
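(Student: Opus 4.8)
The plan is to adapt the Lazer--Solimini scheme to the quasilinear setting: combine the information carried by the critical groups of $f$ ``at infinity'' (a saddle/linking structure governed by $\lambda$) with the vanishing of a critical group of $f$ at the origin, and then to derive a contradiction from the Morse relations recalled in the Introduction. First I would check that, under the nonresonance hypothesis $\lambda\not\in\sigma(-\Delta_p)$ together with $(a)$, the functional $f$ defined in \eqref{eq:fmod} satisfies the Palais--Smale condition on $\w$; this is the standard compactness input for asymptotically $p$-linear problems and is what makes the deformation lemmas, and hence the Morse relations, available. Since $g(0)=0$ gives $f(0)=0$, the origin is a critical point, and I shall argue by contradiction, \emph{assuming that $0$ is the only critical point of $f$}, in order to produce the desired nontrivial solution.

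For the behaviour at infinity, I would exploit that, by $(a)$, $G(s)=\frac{\lambda}{p}\,|s|^p+o(|s|^p)$, while $\Psi_{p,\kappa}(\xi)=\frac1p\,|\xi|^p+\mathrm{O}(|\xi|^{p-2})$ for large $|\xi|$, so that $f$ is a lower-order perturbation of the $p$-homogeneous functional $u\mapsto \frac1p\bigl(\mathcal{E}(u)-\lambda\io|u|^p\,dx\bigr)$. The strict inequalities $\lambda_{m_\infty}<\lambda<\lambda_{m_\infty+1}$, read through the cohomological index characterization of the $\lambda_m$, furnish a symmetric subset $A$ of $M$ with $\idx{A}\ge m_\infty$ on which $\mathcal{E}<\lambda$, but no symmetric set of index $\ge m_\infty+1$ on which $\mathcal{E}\le\lambda$. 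Inserting this into the abstract homological linking result \cite[Theorem~3.6]{cingolani_degiovanni2005} yields levels $a<0<b$ such that the pair of sublevels $(\{f\le b\},\{f<a\})$ has a nontrivial $m_\infty$-th Betti number, $\beta_{m_\infty}\ge 1$.

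The decisive and hardest step is the estimate of the critical groups at the origin. I would establish that
\[
C_m(f,0)=0\qquad\text{whenever } m\notin[m(f,0),m^*(f,0)]\,,
\]
so that the hypothesis $m_\infty\notin[m(f,0),m^*(f,0)]$ forces $C_{m_\infty}(f,0)=0$. The hard part will be precisely this: for $1<p<2$ the functional $f$ is not of class $C^2$ (and $\Psi_{p,\kappa}$ itself fails to be $C^2$ when $\kappa=0$), so neither the classical nor the generalized Morse lemma applies. One must therefore replace the missing second derivative by the Hessian-type quadratic form $Q_0$ and carry out, on a small neighbourhood of $0$, a splitting/deformation analysis that controls the degenerate directions and pins the critical groups between the index $m(f,0)$ and the ``index plus nullity'' $m^*(f,0)$; this is the technical core for which the definitions of $m(f,0)$ and $m^*(f,0)$ are tailored.

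Finally I would assemble the two computations. Since $\beta_{m_\infty}\ge 1$, the pair $(\{f\le b\},\{f<a\})$ is not acyclic, so under the contradiction assumption the unique critical point $0$ lies in the range $[a,b]$; consequently, in the Morse relations all the $C_m$ reduce to $\dim C_m(f,0)$, and comparison of the coefficients of $t^{m_\infty}$ gives
\[
\dim C_{m_\infty}(f,0)=\beta_{m_\infty}+q_{m_\infty}+q_{m_\infty-1}\,,
\]
where $q_{m_\infty},q_{m_\infty-1}\ge 0$ are coefficients of the polynomial $Q$. As $\beta_{m_\infty}\ge 1$, the right-hand side is positive, contradicting $C_{m_\infty}(f,0)=0$. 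Hence $f$ must possess a critical point different from $0$, that is, a nontrivial solution of \eqref{eq:quasmod}.
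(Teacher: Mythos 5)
Your proposal is correct and takes essentially the same route as the paper: compactness (Cerami--Palais--Smale) from the nonresonance hypothesis, a linking/saddle structure at infinity obtained from the cohomological-index characterization of $(\lambda_m)$, and the critical-group estimate $C_{m}(f,0)=\{0\}$ for $m\notin[m(f,0),m^*(f,0)]$ (the paper's Theorem~\ref{thm:general}, proved via the finite-dimensional reduction) to rule out the trivial solution. The only difference is presentational: the paper's saddle theorem over cones (Theorem~\ref{thm:saddle}, combining \cite{degiovanni_lancelotti2007} and \cite{degiovanni2011}) directly produces a critical point $u$ with $C_{m_\infty}(f,u)\neq\{0\}$ among finitely many critical points, whereas you extract a nontrivial Betti number of a sublevel pair and run the Morse relations to a contradiction --- equivalent bookkeeping over the same machinery.
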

It is easily seen that, if $p=2$, the assumption that there 
exists some eigenvalue of $-\Delta$ between $\lambda$ 
and $g'(0)$ is equivalent to
$m_{\infty} \not\in [m(f,0),m^*(f,0)]$.
\par
Differently from
\cite{cingolani_degiovanni2005}, we aim also to deal with the 
resonant case, namely $\lambda\in\sigma(-\Delta_p)$.
This is not motivated by the pure wish of facing a more 
complicated situation.
To our knowledge, nobody has so far excluded the possibility 
that $\sigma(-\Delta_p)=\{\lambda_1\}\cup[\lambda_2,+\infty[$. 
In such a case, the restriction
$\lambda\not\in\sigma(-\Delta_p)$ would be quite severe.
Taking into account Theorem~\ref{thm:nonres}, the next
result has interest if $\lambda\in\sigma(-\Delta_p)$.
\begin{thm}
\label{thm:res} 
Assume hypothesis $(a)$ on $g$ and one of the following:
\begin{itemize}
\item[$(b_-)$]
we have
\[
\lim_{|s|\to\infty} \left[pG(s) - g(s)s\right] = -\infty\,;
\]
then we denote by $m_{\infty}$ the integer such that
\[
\lambda_{m_{\infty}} < \lambda \leq \lambda_{m_{\infty}+1}\,;
\]
\item[$(b_+)$]
we have
\[
\lim_{|s|\to\infty} \left[pG(s) - g(s)s\right] = +\infty
\]
and, moreover, either $1<p\leq 2$ with $\kappa\geq 0$ or 
$p>2$ with $\kappa=0$;
then we denote by $m_{\infty}$ the integer such that
\[
\lambda_{m_{\infty}} \leq \lambda < \lambda_{m_{\infty}+1}\,.
\]
\end{itemize}
\indent
If 
\[	
m_{\infty} \not\in [m(f,0),m^*(f,0)]\,,
\]
then there exists a nontrivial solution $u$
of~\eqref{eq:quasmod}.
\end{thm}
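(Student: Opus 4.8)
The plan is to deduce the existence of a nontrivial solution from the Morse relations
\[
\sum_{m=0}^{\infty} C_m t^m = \sum_{m=0}^{\infty} \beta_m t^m + (1+t)Q(t)\,,
\]
following the scheme of~\cite{lazer_solimini1988} adapted to the quasilinear setting. Two ingredients have to be combined: a saddle geometry forcing the $m_\infty$-th Betti number of a suitable pair of sublevels of $f$ to be nonzero, and a critical group estimate at the trivial solution giving $C_{m_\infty}(f,0)=0$. Since $g(0)=0$, the point $u=0$ is always a critical point of $f$. Arguing by contradiction, if $f$ had no nontrivial critical point, then $0$ would be its only critical point and, comparing the coefficients of $t^{m_\infty}$ in the Morse relations, one would obtain $C_{m_\infty}(f,0)\neq 0$ (because $\beta_{m_\infty}\geq 1$ and $Q$ has nonnegative coefficients), against $C_{m_\infty}(f,0)=0$. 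Before all this, the Palais--Smale condition must be secured, and this is precisely where the resonant hypotheses $(b_\pm)$ enter; it is the step that distinguishes Theorem~\ref{thm:res} from the nonresonant Theorem~\ref{thm:nonres}.

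First I would prove that $f$ satisfies the Palais--Smale condition. Since the principal part enjoys standard compactness properties (it is of type $(S)_+$), it suffices to show that every Palais--Smale sequence is bounded in \w. The natural quantity to test is
\[
p\,f(u) - f'(u)[u] = \io \kappa^2 \left(\kappa^2 + |\nabla u|^2\right)^{\frac{p-2}{2}}\,dx - \kappa^p|\Omega| - \io \left[pG(u) - g(u)u\right]\,dx\,,
\]
the gradient contribution coming from the identity $(\kappa^2+|\xi|^2)^{p/2} - (\kappa^2+|\xi|^2)^{(p-2)/2}|\xi|^2 = \kappa^2(\kappa^2+|\xi|^2)^{(p-2)/2}$. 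Along a Palais--Smale sequence the left-hand side stays bounded, so if $\|u_n\|\to\infty$ one reaches a contradiction by analysing the sign of the right-hand side after renormalisation. Under $(b_-)$ the nonlinear integral tends to $+\infty$ while the gradient term is nonnegative, so the right-hand side diverges to $+\infty$ for every $p>1$ and $\kappa\geq 0$, which explains why $(b_-)$ needs no restriction. Under $(b_+)$ the nonlinear integral tends to $-\infty$, and forcing divergence to $-\infty$ requires the gradient term to stay bounded above: this holds when $\kappa=0$ (it vanishes) and when $1<p\leq 2$ (since then $(\kappa^2+|\nabla u|^2)^{(p-2)/2}\leq \kappa^{p-2}$), but fails for $p>2$ with $\kappa>0$, exactly the case excluded in $(b_+)$.

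Next I would produce the pair of sublevels with nontrivial homology. By hypothesis $(a)$, as $\|u\|\to\infty$ the functional $f$ is dominated by the $p$-homogeneous expression $u\mapsto \frac1p\io|\nabla u|^p\,dx - \frac{\lambda}{p}\io|u|^p\,dx$, whose geometry relative to the level $\lambda$ is governed by the sequence $(\lambda_m)$ built through the Fadell--Rabinowitz index. I would invoke the abstract homological linking result~\cite[Theorem~3.6]{cingolani_degiovanni2005}, which holds for every $p>1$, to obtain levels $a<b$ with $\beta_{m_\infty}\neq 0$ for the pair $(\{f\leq b\},\{f<a\})$. The endpoint eigenvalue requires care: on the degenerate directions, where $\io|\nabla u|^p\,dx\approx\lambda\io|u|^p\,dx$, the leading term of $f$ is, to leading order, $-\frac1p\io[pG(u)-g(u)u]\,dx$. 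Thus $(b_-)$ pushes $f$ to $+\infty$ on the degenerate eigenspace, so the eigenvalue $\lambda_{m_\infty+1}$ may be counted on the positive side and $\lambda\leq\lambda_{m_\infty+1}$ is admissible, whereas $(b_+)$ pushes $f$ to $-\infty$ there, so $\lambda_{m_\infty}$ is counted on the negative side and $\lambda_{m_\infty}\leq\lambda$ is admissible. This is exactly the reason for the two different definitions of $m_\infty$.

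Finally I would estimate the critical groups at $0$. The second-order information is encoded in the quadratic form $Q_0$, whose negative and negative-semidefinite subspaces have dimensions $m(f,0)$ and $m^*(f,0)$. Using a Morse-type description of the critical groups — available from~\cite{cingolani_vannella2007} when $p\geq 2$ and, in the range $1<p<2$, relying on the non-smooth substitute for the generalized Morse lemma developed in this paper — one shows that $C_q(f,0)=0$ whenever $q\notin[m(f,0),m^*(f,0)]$; in particular $m_\infty\notin[m(f,0),m^*(f,0)]$ yields $C_{m_\infty}(f,0)=0$, which contradicts $\beta_{m_\infty}\neq 0$ and closes the argument. The hard part is this last estimate when $1<p<2$, where $f$ is merely $C^1$ (and for $\kappa=0$ even $\Psi_{p,\kappa}$ fails to be $C^2$), so that no classical Morse lemma applies and the degeneracy of the second variation at $0$ must be controlled by a suitable replacement; together with securing the Palais--Smale condition at the endpoint eigenvalue under $(b_\pm)$, these are the genuinely resonant difficulties specific to Theorem~\ref{thm:res}.
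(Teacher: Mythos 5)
Your overall architecture coincides with the paper's: compactness extracted from $(b_\pm)$ via the quantity $pf(u)-\langle f'(u),u\rangle$, a saddle geometry over the symmetric cones attached to the Fadell--Rabinowitz eigenvalues producing nontrivial topology in dimension $m_\infty$, and the vanishing $C_{m_\infty}(f,0)=\{0\}$ to exclude the trivial solution (the paper runs the last step through a linking theorem over cones, Theorem~\ref{thm:saddle}, rather than through the Morse relations, but that difference is immaterial). The genuine gap is in your compactness step. You assert that along a Palais--Smale sequence the quantity $pf(u_n)-\langle f'(u_n),u_n\rangle$ stays bounded. This is false before boundedness of $(u_n)$ is known: $f(u_n)$ is bounded, but $|\langle f'(u_n),u_n\rangle|\le\|f'(u_n)\|\,\|u_n\|$ is only $o(\|u_n\|)$, so this term may diverge when $\|u_n\|\to\infty$. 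On the other side, the divergence of $\int_\Omega\left[pG(u_n)-g(u_n)u_n\right]dx$ that $(b_\pm)$ and Fatou's lemma provide comes with no rate whatsoever (the hypothesis $pG(s)-g(s)s\to\pm\infty$ may hold arbitrarily slowly), so the two sides of your identity can diverge simultaneously and no contradiction is reached: the boundedness argument collapses precisely in the resonant situation the theorem addresses. The paper avoids this by proving the \emph{Cerami}--Palais--Smale condition, for which $(1+\|u_n\|)\|f'(u_n)\|\to 0$ forces $\langle f'(u_n),u_n\rangle\to 0$, and by using a saddle theorem formulated under that condition (Theorem~\ref{thm:saddle}, resting on \cite{corvellec1999, degiovanni2011}). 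Your scheme can be repaired, but only by replacing PS with Cerami sequences throughout and invoking deformation/Morse relations valid under the Cerami condition.

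Two further points, less central but real. First, your treatment of the resonant endpoint in the geometry is only heuristic: setting $H(s)=G(s)-\frac{\lambda}{p}|s|^p$, one has $pG(s)-g(s)s=pH(s)-sH'(s)$, and passing from $(b_\pm)$, i.e. $pH-sH'\to\pm\infty$, to the statements actually needed --- namely $G(s)\le\frac{\lambda}{p}|s|^p+C$, so that $\inf_{X_+}f>-\infty$ under $(b_-)$ even when $\lambda=\lambda_{m_\infty+1}$, and $G(s)\ge\frac{\lambda}{p}|s|^p-C$ plus divergence of $\int_\Omega\left[G(u_n)-\frac{\lambda}{p}|u_n|^p\right]dx$ via Fatou, so that $f\to-\infty$ on $X_-$ under $(b_+)$ even when $\lambda=\lambda_{m_\infty}$ --- requires an integration argument; this is exactly Proposition~\ref{prop:res} of the paper and cannot be skipped. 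Relatedly, the restriction in $(b_+)$ (that $\kappa=0$ when $p>2$) is needed not only for your gradient term but also for the geometry itself, since the inequality $\Psi_{p,\kappa}(\xi)\le\frac{1}{p}|\xi|^p$ used on $X_-$ fails when $p>2$ and $\kappa>0$. Second, for $p\ge 2$ the critical-group estimate at $0$ cannot be imported from \cite{cingolani_vannella2007}: those results require $p>2$, $\kappa>0$ and growth assumptions on $g'$ making $f$ of class $C^2$, none of which are available here (in $(b_+)$ with $p>2$ one even has $\kappa=0$). What you need is precisely Theorem~\ref{thm:general} of the paper, which holds for all $\kappa\ge 0$ and $1<p<\infty$ because $g$ is independent of $x$ and, by hypothesis $(a)$, has subcritical growth.
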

\begin{rem}
Concerning the lower order term, examples of $g$
satisfying $(a)$ and $(b_+)$ or $(b_-)$ are given by
\begin{alignat*}{3}
&g(s) = \lambda(1+s^2)^{\frac{p-2}{2}}s
+ \mu(1+s^2)^{\frac{q-2}{2}}s
&&\qquad\text{with $\mu\neq 0$ and $0<q<p\leq 2$}\,,\\
&g(s) = \lambda |s|^{p-2}s
+ \mu |s|^{q-2} s
&&\qquad\text{with $\mu\neq 0$ and $2\leq q<p$}\,,
\end{alignat*}
so that, respectively,
\begin{alignat*}{3}
&G(s) = \frac{\lambda}{p}\,
\left[\left(1 + s^2\right)^{\frac{p}{2}} - 1\right] 
+ \frac{\mu}{q}\,
\left[\left(1 + s^2\right)^{\frac{q}{2}} - 1\right]\,, \\
&G(s) = \frac{\lambda}{p}\,|s|^p
+ \frac{\mu}{q}\,|s|^q\,.
\end{alignat*}
\end{rem}
\begin{rem}
Let $p=4$, so that
\[
\Psi_{p,\kappa}(\xi) 
= \frac{1}{4}\,|\xi|^4 + \frac{1}{2}\,\kappa^2 |\xi|^2\,,
\]
and let
\[
g(s) = \lambda_m s^3 + \mu s
\]
with $m\geq 1$ and $\mu>0$, so that
\[
\lim_{|s|\to\infty} \left[4G(s) - g(s)s\right] = +\infty\,,
\]
while
\[
f(u) = \frac{1}{4} \int_\Omega \left[
|\nabla u|^4 - \lambda_m |u|^4\right]\,dx 
+ \frac{1}{2} \int_\Omega \left[
\kappa^2|\nabla u|^2 - \mu |u|^2\right]\,dx\,.
\]
It is clear that we cannot describe the geometry
of the functional $f$, if we have no information
concerning $\kappa^2$ and $\mu$.
For this reason in~$(b_+)$ only the case
$\kappa=0$ is considered, when $p>2$.
\end{rem}
In Section~\ref{sect:critical groups} we state some results 
about the critical groups estimates for a large class of 
functionals including~\eqref{eq:fmod}.  
We refer to Theorems~\ref{thm:positivo}, \ref{thm:cornerstone},
\ref{thm:positivo2} for $k>0$ and to 
Theorems~\ref{thm:zero} and~\ref{thm:u=0} 
for $k=0$ and $1<p<2$ (such results have been announced, 
without proof, in~\cite{cingolani_degiovanni_vannella2014}). 
Moreover, in a more particular situation which is however
enough for the proof of Theorems~\ref{thm:nonres} 
and~\ref{thm:res}, we extend Theorem~\ref{thm:positivo} to any 
$\kappa\geq 0$ and $1<p<\infty$ (see Theorem~\ref{thm:general}).
\par
Sections~\ref{sect:aux}, \ref{sect:parmin}, \ref{sect:red} 
and~\ref{sect:proofcrit} are devoted to the proof, by a finite 
dimensional reduction introduced in a different setting 
in~\cite{lancelotti2002}, of the results stated 
in Section~\ref{sect:critical groups},
while~Section~\ref{sect:proofs} contains the proof of 
Theorems~\ref{thm:nonres} and~\ref{thm:res}.


\section{Critical groups estimates}
\label{sect:critical groups}
In this section we consider a class of functionals
including~\eqref{eq:fmod}.
More precisely, let $\Omega$ be a bounded open subset of 
$\R^N$, $N\geq 1$, with $\partial\Omega$ of class $C^{1,\alpha}$ 
for some $\alpha\in]0,1]$, and let
$f:W^{1,p}_0(\Omega)\to\R$ be the functional defined as
\begin{equation}
\label{eq:f}
f(u) = \int_{\Omega} \Psi(\nabla u)\,dx
- \int_{\Omega} G(x,u)\,dx
\end{equation}
where $G(x, s)= \int_0^s g(x, t) dt$.
We assume that:
\begin{itemize}
\item[$(\Psi_1)$]
the function $\Psi:\R^N\rightarrow\R$ is of class~$C^1$ 
with $\Psi(0)=0$ and $\nabla\Psi(0)=0$;
moreover, there exist $1<p<\infty$, $\kappa\geq 0$ and
$0<\nu\leq C$ such that the functions
$\left(\Psi-\nu\,\Psi_{p,\kappa}\right)$ and
$\left(C\,\Psi_{p,\kappa}-\Psi\right)$ are both convex;
such a $p$ is clearly unique;
\item[$(\Psi_2)$]
if $\kappa=0$ and $1<p<2$, then $\Psi$ is of class~$C^2$
on $\R^N\setminus\{0\}$;
otherwise, $\Psi$ is of class~$C^2$ on $\R^N$;
\item[$(g_1)$]
the function $g:\Omega\times\R\to\R$ is such that
$g(\cdot,s)$ is measurable for every $s\in\R$ and 
$g(x,\cdot)$ is of class~$C^1$ for a.e. $x\in\Omega$;
moreover, we suppose that:
\begin{itemize}
\item
if $p < N$, there exist $C, q > 0$ such that
$q \leq p^* -1=\frac{Np}{N-p} - 1$ and
\[
|g(x,s)| \leq  C( 1+ |s|^{q})
\qquad\text{for a.e. $x\in\Omega$ and every $s\in\R$}\,;
\]
\item
if $p = N$, there exist $C, q > 0$ such that
\[
|g(x,s)| \leq  C( 1+ |s|^{q})
\qquad\text{for a.e. $x\in\Omega$ and every $s\in\R$}\,;
\]
\item
if $p > N$, for every $S>0$ there exists $C_S>0$ such that
\[
|g(x,s)|\leq C_S
\qquad\text{for a.e. $x\in\Omega$ and every $s\in\R$
with $|s|\leq S$}\,;
\]
\end{itemize}
\item[$(g_2)$]
for every $S>0$ there exists $\widehat{C}_S>0$ such that
\[
|D_sg(x,s)|\leq \widehat{C}_S
\qquad\text{for a.e. $x\in\Omega$ and every $s\in\R$
with $|s|\leq S$}\,.
\]
\end{itemize}
From $(\Psi_1)$ it follows that $\Psi$ is strictly convex.
Moreover, under these assumptions, it is easily seen that
$f:W^{1,p}_0(\Omega)\rightarrow\R$ is of class~$C^1$,
while it is of class $C^2$ if $p>\max\{N,2\}$.
Finally, even in the case $g=0$,
$f$ is never of class~$C^2$ for $1<p<2$ and is of
class $C^2$ in the case $p=2$ iff $\Psi$ is a quadratic
form on $\R^N$
(see~\cite[Proposition~3.2]{abbondandolo_schwarz2009}).
\par
Now, let $u_0\in W^{1,p}_0(\Omega)$ be a critical point of
the functional $f$, namely a weak solution of
\[
\left\{
\begin{array}{ll}
- \, \dvg\left[\nabla\Psi(\nabla u)\right] = g(x,u)
& \qquad\text{in $\Omega$} \,, \\
\noalign{\medskip} u=0
& \qquad\text{on $\partial\Omega$} \,.
\end{array}
\right.
\]
According
to~\cite{guedda_veron1989, dibenedetto1983, lieberman1988,
	tolksdorf1983, tolksdorf1984},
$u_0 \in C^{1,\beta}(\overline\Omega)$ for some
$\beta\in]0,1]$ (see also the next
Theorems~\ref{thm:regLinfty} and~\ref{thm:regC1}).
\par
Let us recall the first ingredient we need
from~\cite{chang1993, degiovanni2011, mawhin_willem1989}.
\begin{defn}
\label{critgroups}
Let $\G$ be an abelian group, $c=f(u_0)$ and
$f^c= \left \{u \in \w\! : f(u) \leq c \right \}$.
The {\em $m$-th critical group of $f$ at $u_0$ with coefficients
in $\G$} is defined by
\[
C_m(f,u_0;\G) =
H^m \left(f^{c}, f^{c}\setminus \{u_0\};\G\right) \,,
\]
where $H^*$ stands for Alexander-Spanier
cohomology~\cite{spanier1966}.
We will simply write $C_m(f,u_0)$, if no confusion can arise.
\end{defn}
In general, it may happen that $C_m(f,u_0)$ is not finitely
generated for some $m$ and that $C_m(f,u_0)\neq\{0\}$ for 
infinitely many $m$'s.
If however $u_0$ is an isolated critical point, under
assumptions $(\Psi_1)$ and $(g_1)$ it follows
from~\cite[Theorem~1.1]{cingolani_degiovanni2009}
and~\cite[Theorem~3.4]{almi_degiovanni2013} that
$C_*(f,u_0)$ is of finite type.
\par
The other ingredient is a notion of Morse index,
which is not standard, as the functional~$f$ is
not in general of class $C^2$.
\par
In the case $\kappa>0$ and $1<p<\infty$, observe that
\begin{multline*}
\nu\,\min\left\{(p-1),1\right\}\, 
(\kappa^2+ |\eta|^2)^{\frac{p-2}{2}}\,|\xi|^2
\leq \Psi''(\eta)[\xi]^2 \\
\leq
C\,\max\left\{(p-1),1\right\}\,
(\kappa^2+ |\eta|^2)^{\frac{p-2}{2}}\,|\xi|^2
\qquad\text{for any $\eta, \xi\in\R^N$}\,,
\end{multline*}
as
$\left(\Psi-\nu\,\Psi_{p,\kappa}\right)$ and
$\left(C\,\Psi_{p,\kappa}-\Psi\right)$ are both convex.
Therefore, there exists
$\tilde{\nu}>0$ such that
\[
\tilde{\nu}\,|\xi|^2 \leq \Psi''(\nabla u_0(x))\,[\xi]^2
\leq\dfrac{1}{\tilde{\nu}}\,|\xi|^2
\qquad\text{for any $x\in\Omega$ and $\xi\in\R^N$}\,,
\]
as $\nabla u_0$ is bounded.
Moreover, $D_sg(x,u_0)\in L^{\infty}(\Omega)$,
as $u_0$ is bounded.
Thus, we can define a smooth quadratic form
$Q_{u_0}:W^{1,2}_0(\Omega) \rightarrow\R$ by
\[
Q_{u_0}(v) =
\int_{\Omega}\Psi''(\nabla u_0)[\nabla v]^2\,dx
- \int_{\Omega} D_sg(x,u_0)v^2\,dx
\]
and define the \emph{Morse index of $f$ at $u_0$}
(denoted by $m(f,u_0)$) as the supremum of  the dimensions of
the linear subspaces of $W^{1,2}_0(\Omega)$ where $Q_{u_0}$
is negative definite and the \emph{large Morse index of $f$
at $u_0$} (denoted by $m^*(f,u_0)$) as the supremum of
the dimensions of the linear subspaces of $W^{1,2}_0(\Omega)$
where $Q_{u_0}$ is negative semidefinite.
We clearly have $m(f,u_0)\leq m^*(f,u_0)<+\infty$.
Let us point out that $Q_{u_0}$ is well behaved on 
$W^{1,2}_0(\Omega)$, while~$f$ is naturally defined on
$W^{1,p}_0(\Omega)$.
\par
In the case $\kappa=0$ and $p>2$, we still have
\[
\Psi''(\nabla u_0(x))\,[\xi]^2
\leq\dfrac{1}{\tilde{\nu}}\,|\xi|^2
\qquad\text{for any $x\in\Omega$ and $\xi\in\R^N$}\,,
\]
so that 
$Q_{u_0}:W^{1,2}_0(\Omega) \rightarrow\R$, $m(f,u_0)$ and
$m^*(f,u_0)$ can be defined as before and
$m(f,u_0)\leq m^*(f,u_0)$.
However, $m(f,u_0)$ and $m^*(f,u_0)$ might take the 
value $+\infty$.
\par
Finally, in the case $\kappa=0$ and $1<p<2$ observe that
\[
\frac{(p-1)\nu}{|\eta|^{2-p}}\,|\xi|^2
\leq \Psi''(\eta)[\xi]^2 \leq
\frac{C}{|\eta|^{2-p}}\,|\xi|^2
\qquad\text{for any $\eta, \xi\in\R^N$ with $\eta\neq 0$}\,,
\]
whence
\[
\Psi''(\nabla u_0(x))\,[\xi]^2 \geq \tilde{\nu}\,|\xi|^2
\qquad\text{for any $x\in\Omega$ with $\nabla u_0(x)\neq 0$ 
and $\xi\in\R^N$}\,.
\]
Set
\begin{alignat*}{3}
&Z_{u_0}&&=\left\{x\in\Omega:\,\,
\nabla u_0(x) = 0\right\}\,,\\
&X_{u_0}&&=
\left\{v\in W^{1,2}_0(\Omega):\,\,\text{
	$\nabla v(x)=0$ a.e. in $Z_{u_0}$ and
	$\dfrac{|\nabla v|^2}{|\nabla u_0|^{2-p}}\in
	L^1(\Omega\setminus Z_{u_0})$}
\right\}\,.
\end{alignat*}
Then
\[
(v|w)_{u_0} = \int_{\Omega\setminus Z_{u_0}}
\Psi''(\nabla u_0)\,[\nabla v,\nabla w]\,dx
\]
is a scalar product on $X_{u_0}$ which
makes $X_{u_0}$ a Hilbert space continuously
embedded in~$W^{1,2}_0(\Omega)$.
Moreover, we can define a smooth quadratic form
$Q_{u_0}:X_{u_0}\rightarrow\R$ by
\[
Q_{u_0}(v) =
\int_{\Omega\setminus Z_{u_0}}
\Psi''(\nabla u_0)[\nabla v]^2\,dx
- \int_{\Omega} D_sg(x,u_0)v^2\,dx
\]
and denote again by $m(f,u_0)$ the supremum of
the dimensions of the linear subspaces of $X_{u_0}$
where $Q_{u_0}$ is negative definite and by $m^*(f,u_0)$ the
supremum of the dimensions of the linear subspaces of
$X_{u_0}$ where $Q_{u_0}$ is negative semidefinite.
Since the derivative of $Q_{u_0}$ is a compact
perturbation of the Riesz isomorphism, we 
have $m(f,u_0)\leq m^*(f,u_0)<+\infty$.
For a sake of uniformity, let us set
$X_{u_0}=W^{1,2}_0(\Omega)$ when $\kappa>0$ and $1<p<\infty$.
\par\smallskip
Now we can state the results concerning the critical groups 
estimates for the functional~\eqref{eq:f}.
\begin{thm}
\label{thm:positivo}
Let  $\kappa>0$ and $1<p<\infty$.
Let $u_0 \in W^{1,p}_0(\Omega)$ be a
critical point of the functional $f$ defined in~\eqref{eq:f}.
\par
Then we have
\[
C_m(f, u_0) = \{0\}\qquad\text{
whenever $m < m(f,u_0)$ or $m > m^*(f,u_0)$}\,.
\]
\end{thm}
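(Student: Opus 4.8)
The plan is to reduce the computation of $C_m(f,u_0)$ to a finite-dimensional problem controlled by the quadratic form $Q_{u_0}$, exploiting the two features that $\kappa>0$ provides: $\Psi\in C^2(\R^N)$ and the uniform bound $\tilde{\nu}\,|\xi|^2\le\Psi''(\nabla u_0(x))[\xi]^2\le\tilde{\nu}^{-1}|\xi|^2$. First I would decompose $W^{1,2}_0(\Omega)=V^-\oplus V^0\oplus V^+$, orthogonally for the scalar product $(v,w)\mapsto\io\Psi''(\nabla u_0)[\nabla v,\nabla w]\,dx$, which is equivalent to the standard one by the ellipticity bound. Since $D_sg(\cdot,u_0)\in L^\infty(\Omega)$ and $W^{1,2}_0(\Omega)\hookrightarrow L^2(\Omega)$ is compact, $Q_{u_0}$ is this scalar product minus a compact perturbation; hence $V^-$ (where $Q_{u_0}$ is negative definite) and $V^0=\ker$ are finite-dimensional with $\dim V^-=m(f,u_0)$ and $\dim(V^-\oplus V^0)=m^*(f,u_0)$, while $Q_{u_0}(v)\ge\delta\,\|v\|^2$ on $V^+$ for some $\delta>0$. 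By linear elliptic regularity (the coefficients $\Psi''(\nabla u_0)$ are continuous, as $u_0\in C^{1,\beta}(\cl\Omega)$) the space $W:=V^-\oplus V^0$ consists of bounded functions, so $W\subset\w$, and I would fix a closed complement $V$ of $W$ in $\w$ along which $f$ is locally strictly convex near $u_0$.

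The core analytic step is a parametrized minimization in the $V$-direction. For $w$ in a neighborhood of $0$ in $W$, I would show that $v\mapsto f(u_0+w+v)$ has a unique critical point $v=\psi(w)$ near $0$, yielding a continuous map $\psi$ with $\psi(0)=0$. The failure of $f$ to be $C^2$ rules out the classical implicit function theorem, so the minimizer is produced directly; uniqueness and continuity come from the strict, \emph{locally uniform} convexity of $u\mapsto\io\Psi(\nabla u)\,dx$ — which for $1<p<2$ relies precisely on $\kappa>0$, since $(\kappa^2+|\eta|^2)^{(p-2)/2}$ stays bounded below for bounded $|\eta|$ — combined with the positive definiteness of $Q_{u_0}$ on $V^+$ dominating the bounded perturbation $-\io D_sg(x,u_0)\,v^2\,dx$. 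Setting $\hat f(w)=f(u_0+w+\psi(w))$ then gives a reduced functional on the finite-dimensional space $W$ with a critical point at $0$.

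Next I would establish the reduction identity $C_m(f,u_0)\cong C_m(\hat f,0)$ for every $m$: the map $(w,v)\mapsto u_0+w+v$ and the negative gradient flow pushing the $V$-component toward $\psi(w)$ exhibit the pair $(f^{c},f^{c}\setminus\{u_0\})$, via excision and homotopy invariance of Alexander–Spanier cohomology, as cohomologically equivalent to the finite-dimensional pair for $\hat f$ at $0$. Since $\hat f$ admits at $0$ the expansion $\hat f(w)=\hat f(0)+\tfrac12\,Q_{u_0}(w)+o(\|w\|^2)$ with $Q_{u_0}|_{W}$ negative definite on $V^-$ and null on $V^0$, the finite-dimensional Gromoll–Meyer splitting gives $C_m(\hat f,0)\cong C_{m-m(f,u_0)}(\check f,0)$ for a functional $\check f$ on $V^0$. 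As $V^0$ has dimension $m^*(f,u_0)-m(f,u_0)$, the right-hand side vanishes whenever $m-m(f,u_0)<0$ or $m-m(f,u_0)>m^*(f,u_0)-m(f,u_0)$, i.e. whenever $m<m(f,u_0)$ or $m>m^*(f,u_0)$, which is exactly the assertion.

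I expect the main obstacle to be the parametrized minimization together with the regularity and continuity of $w\mapsto\psi(w)$ in the Banach, non-$C^2$ setting, and — closely tied to it — reconciling the $W^{1,p}_0(\Omega)$ and $W^{1,2}_0(\Omega)$ topologies (whose inclusion reverses according to whether $p\lessgtr 2$) so that the decomposition and the minimization cooperate, and verifying that the resulting reduction preserves the Alexander–Spanier critical groups. This is precisely the part that must substitute for the generalized Morse lemma available in Hilbert spaces, and where the hypotheses $\kappa>0$ and the convexity built into $(\Psi_1)$ are indispensable.
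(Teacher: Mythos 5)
Your reduction scheme is essentially the paper's own, with the roles of the letters $V$ and $W$ interchanged: both arguments parametrically minimize $f$ over the infinite-dimensional part where $Q_{u_0}$ is positive definite (this is where $\kappa>0$ gives the locally uniform convexity), both put the kernel directions into the finite-dimensional parameter space of dimension $m^*(f,u_0)$, and both transfer critical groups to the reduced functional by a deformation argument. The genuine gap is in your final step. The Gromoll--Meyer shifting theorem, which you invoke to get $C_m(\hat f,0)\cong C_{m-m(f,u_0)}(\check f,0)$, requires $0$ to be an \emph{isolated} critical point of $\hat f$ (equivalently, by the reduction, $u_0$ to be an isolated critical point of $f$), and it requires genuine $C^2$ regularity of $\hat f$, not merely the pointwise expansion $\hat f(w)=\hat f(0)+\tfrac12 Q_{u_0}(w)+o(\|w\|^2)$ that you assert. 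Theorem~\ref{thm:positivo} assumes neither: $u_0$ is an arbitrary critical point. This is exactly why the paper reserves the shifting-type dichotomy for Theorem~\ref{thm:positivo2}, where isolation is an explicit hypothesis and \cite[Corollary~8.4]{mawhin_willem1989} is applied.

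The paper closes the argument without any isolation hypothesis as follows. For $m>m^*(f,u_0)$ nothing beyond the reduction is needed: the reduced functional lives on a space of dimension $m^*(f,u_0)$, and critical groups defined via Alexander--Spanier cohomology vanish in all degrees above the dimension of the ambient space. For $m<m(f,u_0)$ one first proves (Theorem~\ref{thm:phiC2}) that for $\kappa>0$ the map $\psi$ is of class $C^1$ into $W^{1,2}_0(\Omega)$, so that the reduced functional is genuinely of class $C^2$ with second derivative at $0$ equal to $Q_{u_0}$ restricted to the reduction space; one then applies the Morse-index estimate of \cite[Theorem~3.1]{lancelotti2002}, which yields vanishing of $C_m$ below the Morse index \emph{without} isolation. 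If you replace your shifting step by these two arguments --- and upgrade your asserted Taylor expansion to the actual $C^2$ statement, which is where the differentiability of $\psi$ with respect to the parameter has to be proved --- your proof becomes complete. A smaller point: ``fix a closed complement'' of the finite-dimensional space in $W^{1,p}_0(\Omega)$ needs care when $p<2$; the paper takes the annihilator in $L^1(\Omega)$ precisely so that the finite-dimensional projection is $L^1$-continuous, which is what feeds the $C^{1,\beta}$ a priori estimates underlying the whole construction.
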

When the quadratic form $Q_{u_0}$ has no kernel,
we can provide a complete description of the
critical groups.
\begin{thm}
\label{thm:cornerstone}
Let  $\kappa>0$ and $1<p<\infty$.
Let $u_0 \in W^{1,p}_0(\Omega)$ be a
critical point of the functional $f$ defined in~\eqref{eq:f}
with $m(f,u_0)=m^*(f,u_0)$.
\par
Then $u_0$ is an isolated critical point of $f$ and we have
\[
\left\{
\begin{array}{ll}
C_m(f,u_0) \approx \G
&\text{if $m = m(f,u_0)$}\,,\\
\noalign{\medskip}
C_m(f, u_0) = \{0\}
&\text{if $m \neq m(f,u_0)$}\,.
\end{array}
\right.
\]
\end{thm}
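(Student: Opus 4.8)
The plan is to combine the vanishing result of Theorem~\ref{thm:positivo} with an explicit computation of the single surviving critical group, carried out through the finite-dimensional reduction developed in the subsequent sections. Set $d := m(f,u_0) = m^*(f,u_0)$. Since the hypothesis forces equality of the Morse index and the large Morse index, Theorem~\ref{thm:positivo} already gives $C_m(f,u_0) = \{0\}$ for every $m \neq d$, because such an $m$ satisfies $m < m(f,u_0)$ or $m > m^*(f,u_0)$. Thus the whole content of the statement reduces to proving that $u_0$ is isolated and that $C_d(f,u_0) \approx \G$.

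First I would reinterpret the hypothesis $m(f,u_0) = m^*(f,u_0)$ as a nondegeneracy condition. Since $\kappa > 0$ we have $X_{u_0} = W^{1,2}_0(\Omega)$, and the two-sided bound $\tilde{\nu}\,|\xi|^2 \le \Psi''(\nabla u_0)[\xi]^2 \le \tilde{\nu}^{-1}|\xi|^2$ shows that $(v,w) \mapsto \int_\Omega \Psi''(\nabla u_0)[\nabla v, \nabla w]\,dx$ is an inner product equivalent to the standard one on $W^{1,2}_0(\Omega)$. With respect to it, $Q_{u_0}$ is represented by $\mathrm{Id} - K$ with $K$ compact and self-adjoint, the zero-order coefficient $D_sg(\cdot,u_0) \in L^\infty(\Omega)$ being composed with the compact embedding $W^{1,2}_0(\Omega) \hookrightarrow L^2(\Omega)$. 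Hence $W^{1,2}_0(\Omega)$ splits into the finite-dimensional negative eigenspace $H^-$, the kernel $H^0$, and the positive part $H^+$, with $m(f,u_0) = \dim H^-$ and $m^*(f,u_0) = \dim H^- + \dim H^0$. The hypothesis is therefore equivalent to $H^0 = \{0\}$, i.e. $0$ lies outside the spectrum of $\mathrm{Id}-K$.

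Next I would run the finite-dimensional reduction: choose a $d$-dimensional space $V$ of smooth functions modelling $H^-$ and a closed complement $W$ in $W^{1,p}_0(\Omega)$ and, exploiting the strict convexity of $\Psi$ together with the strict positivity of $Q_{u_0}$ transverse to $H^-$, produce for each small $v \in V$ a unique minimizer $w(v)$ of $w \mapsto f(v+w)$ on a ball of $W$, with $v \mapsto w(v)$ continuous and $w(0)=0$. The reduced functional $\phi(v) = f(v + w(v))$ on $V$ then carries the local topology of $f$ near $u_0$, so that $C_m(f,u_0) \approx C_m(\phi,0)$ for every $m$, and critical points of $f$ near $u_0$ correspond, via $v \mapsto v + w(v)$, to critical points of $\phi$ near $0$. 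Because $V$ is entirely a negative direction for $Q_{u_0}$ while $H^0 = \{0\}$, the point $0$ is a strict local maximum of $\phi$, hence an isolated critical point, and the correspondence above yields that $u_0$ is isolated. Finally, for a strict local maximum of a continuous function on the $d$-dimensional space $V$, the relevant pair is locally $(B^d, B^d\setminus\{0\})$, whence $C_m(\phi,0) \approx H^m(B^d, B^d\setminus\{0\};\G)$ equals $\G$ for $m=d$ and $\{0\}$ otherwise, giving $C_d(f,u_0) \approx \G$.

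The main obstacle is the reduction step itself, precisely because $f$ need not be of class $C^2$ on $W^{1,p}_0(\Omega)$ when $1<p<2$, so that the classical Gromoll--Meyer splitting lemma is unavailable. The delicate points are: constructing the parametrized minimizer $v \mapsto w(v)$ and proving its continuity at the merely $C^1$ level, using strict monotonicity of $\nabla\Psi$ and the coercivity encoded in the lower bound on $\Psi''(\nabla u_0)$ in place of an implicit function theorem; verifying that the reduction preserves the critical groups by an explicit deformation and excision argument; and the bookkeeping guaranteeing $\dim V = d$ and that $Q_{u_0}$ stays strictly positive on the complement, so that $0$ is a genuine strict maximum of $\phi$. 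These are exactly the facts established in Sections~\ref{sect:aux}--\ref{sect:proofcrit}, on which this proof relies.
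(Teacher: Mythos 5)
Your overall route is the paper's: perform the finite dimensional reduction of Sections~\ref{sect:parmin}--\ref{sect:red}, obtain the reduced functional $\varphi(v)=f(u_0+v+\psi(v))$ on the space $V$ with $\dim V=m^*(f,u_0)=d$, use $C_m(f,u_0)\approx C_m(\varphi,0)$ from Theorem~\ref{thm:phiC1}, and read off everything from the behaviour of $\varphi$ at $0$ (invoking Theorem~\ref{thm:positivo} for the vanishing when $m\neq d$ is legitimate and only cosmetically different from the paper, whose proofs of Theorems~\ref{thm:positivo} and~\ref{thm:cornerstone} are unified). The genuine gap is in your final step: you infer ``$0$ is a strict local maximum of $\varphi$, \emph{hence} an isolated critical point''. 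For a function that is merely continuous, or even $C^1$ --- and your reduction, as you formulate it, only produces a continuous $\psi$ and a $C^1$ functional $\varphi$ --- this implication is false: a strict local maximum can be an accumulation point of critical points (already on $\R$, the $C^1$ function $\varphi(x)=-x^4\left(2+\sin(1/x)\right)$ has a strict local maximum at $0$ and critical points $x_k\to 0$). Since ``$u_0$ is an isolated critical point of $f$'' is one of the two conclusions of the theorem, and by Theorem~\ref{thm:phiC1} it is exactly equivalent to $0$ being isolated for $\varphi$, this step cannot be waved through. A related weakness: the strict maximality itself does not follow just from ``$V$ is a negative direction for $Q_{u_0}$ and $H^0=\{0\}$''; you need to know that the second-order behaviour of $\varphi$ at $0$ is governed by $Q_{u_0}\bigl|_V$, which is not among the properties your reduction statement records.

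The paper closes both holes with Theorem~\ref{thm:phiC2}, and this is precisely where $\kappa>0$ enters in an essential way: for $\kappa>0$ the map $\psi$ is of class $C^1$ into $W^{1,2}_0(\Omega)$ (Theorem~\ref{thm:parmin}), hence $\varphi$ is of class $C^2$ with $\varphi''(0)[v]^2=Q_{u_0}(v)$ for $v\in V$. Under the hypothesis $m(f,u_0)=m^*(f,u_0)$ one has $V=V_-$, so $\varphi''(0)$ is negative definite; thus $0$ is a \emph{nondegenerate} critical point of a $C^2$ function on the $d$-dimensional space $V$, which simultaneously gives that $0$ is isolated (whence $u_0$ is isolated for $f$) and that $C_m(\varphi,0)\approx\delta_{m,d}\,\G$. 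If you wish to keep your ``strict local maximum'' formulation, you must first establish $\varphi\in C^2$ (or at least a second-order expansion of $\varphi$, or of $f$ along $u_0+V$ combined with $\varphi(v)\leq f(u_0+v)$) to get the maximality, and you must replace the invalid implication ``strict maximum $\Rightarrow$ isolated'' by the nondegeneracy argument. The only part of your final step that survives as stated is the cohomological computation: by excision, a strict local maximum on a $d$-dimensional space has $C_m\approx\delta_{m,d}\,\G$, whether or not it is an isolated critical point.
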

If $u_0$ is an isolated critical point of $f$, then
a sharper form of Theorem~\ref{thm:positivo} can be proved.
Taking into account Theorem~\ref{thm:cornerstone}, only the
case $m(f,u_0)<m^*(f,u_0)$ is interesting.
\begin{thm}
\label{thm:positivo2}
Let  $\kappa>0$ and $1<p<\infty$.
Let $u_0 \in W^{1,p}_0(\Omega)$ be an isolated
critical point of the functional $f$ defined in~\eqref{eq:f}
with $m(f,u_0)<m^*(f,u_0)$.
\par
Then one and only one of the following facts holds:
\begin{itemize}
\item[$(a)$]
we have
\[
\left\{
\begin{array}{ll}
C_m(f,u_0) \approx \G
&\text{if $m = m(f,u_0)$}\,,\\
\noalign{\medskip}
C_m(f, u_0) = \{0\}
&\text{if $m \neq m(f,u_0)$}\,;
\end{array}
\right.
\]
\item[$(b)$]
we have
\[
\left\{
\begin{array}{ll}
C_m(f,u_0) \approx \G
&\text{if $m = m^*(f,u_0)$}\,,\\
\noalign{\medskip}
C_m(f, u_0) = \{0\}
&\text{if $m \neq m^*(f,u_0)$}\,;
\end{array}
\right.
\]
\item[$(c)$]
we have
\[
C_m(f, u_0) = \{0\}\qquad\text{
whenever $m \leq m(f,u_0)$ or $m \geq m^*(f,u_0)$}\,.
\]
\end{itemize}
\end{thm}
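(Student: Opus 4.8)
The plan is to reduce the computation of the critical groups near $u_0$ to a finite-dimensional problem on the kernel of the quadratic form $Q_{u_0}$, by a Gromoll--Meyer type splitting adapted to the present non-smooth setting, and then to read off the three alternatives from the standard finite-dimensional trichotomy. Throughout I write $\mu=m(f,u_0)$ and $\mu^*=m^*(f,u_0)$, so that $\nu:=\mu^*-\mu\geq 1$ is the nullity, that is the dimension of the kernel $V^0:=\ker Q_{u_0}$.

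First I would split $W^{1,2}_0(\Omega)$ according to the spectral decomposition of $Q_{u_0}$ as $V^-\oplus V^0\oplus V^+$, where $Q_{u_0}$ is negative definite on the $\mu$-dimensional space $V^-$, vanishes on $V^0$, and is positive definite on $V^+$; since the elements of $V^-$ and $V^0$ solve a linear uniformly elliptic equation with bounded coefficients (here $\kappa>0$, so $\Psi''(\nabla u_0)$ is uniformly elliptic), they are $C^{1,\beta}$ regular and in particular lie in $\w$. On the complementary directions $V^-\oplus V^+$ the form $Q_{u_0}$ is nondegenerate, which is exactly the situation controlled by Theorem~\ref{thm:cornerstone}: a parametrized minimization in the $V^+$ directions and maximization in the $V^-$ directions, carried out at the level of $\w$ by the same reduction used to prove Theorems~\ref{thm:positivo} and~\ref{thm:cornerstone}, collapses these directions and produces a continuous function $\hat f$, defined on a neighbourhood of $0$ in the finite-dimensional space $V^0$, together with the shifting isomorphism
\[
C_m(f,u_0)\;\cong\;C_{m-\mu}(\hat f,0)\qquad\text{for every }m\,.
\]
This already recovers Theorem~\ref{thm:positivo}, since $C_{j}(\hat f,0)=\{0\}$ for $j<0$ and for $j>\nu=\dim V^0$.

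Since $u_0$ is an isolated critical point of $f$, the origin is an isolated critical point of $\hat f$ on the $\nu$-dimensional space $V^0$, with $\nu\geq 1$. At this stage I would invoke the classical finite-dimensional facts: for an isolated critical point in dimension $\nu$ one has $C_0(\hat f,0)\neq\{0\}$ if and only if $0$ is a local minimum, in which case $C_j(\hat f,0)\approx\G$ for $j=0$ and $C_j(\hat f,0)=\{0\}$ otherwise; dually, $C_\nu(\hat f,0)\neq\{0\}$ if and only if $0$ is a local maximum, in which case $C_j(\hat f,0)\approx\G$ for $j=\nu$ and $C_j(\hat f,0)=\{0\}$ otherwise. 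The three possibilities ``$0$ is a local minimum of $\hat f$'', ``$0$ is a local maximum of $\hat f$'' and ``$0$ is neither'' are mutually exclusive, a point that were both forcing $\hat f$ to be locally constant and hence contradicting the isolatedness of $0$ when $\nu\geq 1$, and they are clearly exhaustive. Translating back through the shifting isomorphism, the local-minimum case gives alternative $(a)$, the local-maximum case gives alternative $(b)$, and in the remaining case $C_\mu(f,u_0)\cong C_0(\hat f,0)=\{0\}$ and $C_{\mu^*}(f,u_0)\cong C_\nu(\hat f,0)=\{0\}$, which together with Theorem~\ref{thm:positivo} yields alternative $(c)$.

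The main obstacle is the first step, namely producing the shifting isomorphism in the absence of $C^2$ regularity and of Fredholm properties of the second derivative. The form $Q_{u_0}$ naturally lives on $W^{1,2}_0(\Omega)$, whereas $f$ is defined on $\w$, and for $1<p<2$ the integrand $\Psi$ need not be twice differentiable; hence the classical Gromoll--Meyer splitting lemma is not directly available. This is exactly where the finite-dimensional reduction of Lancelotti type, developed in the later sections for Theorems~\ref{thm:positivo} and~\ref{thm:cornerstone}, must be used: one performs the minimization and maximization in the nondegenerate directions within $\w$ and verifies that the reduced functional $\hat f$ is continuous and retains $0$ as an isolated critical point. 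Once this reduction is granted, the finite-dimensional trichotomy above is routine, and the mutual exclusiveness of the three cases gives the ``one and only one'' assertion of the statement.
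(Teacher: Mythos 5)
Your overall strategy --- collapse the nondegenerate directions, then read the three alternatives off a finite-dimensional trichotomy --- is the same as the paper's, and your finite-dimensional endgame (local minimum gives $(a)$, local maximum gives $(b)$, neither gives $(c)$, with exclusivity forced by isolatedness) is exactly what the paper outsources to \cite[Corollary~8.4]{mawhin_willem1989}. The gap is in the step you yourself flag as the main obstacle, and it is not repaired by your appeal to the paper's reduction. The machinery of Sections~\ref{sect:parmin}--\ref{sect:red} performs \emph{only} a parametric minimization over the complement $W$ of a space $V$ with $\dim V = m^*(f,u_0)$: what drives it is the coercivity of the quadratic form on $\widetilde{W}\cap W^{1,2}_0(\Omega)$ (Lemma~\ref{lem:QconvW}) and the resulting strict convexity of $w\mapsto f(u_0+v+w)$ (Theorem~\ref{thm:split}). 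There is no maximization over the negative directions anywhere in that construction, and no concavity statement that could support one; consequently what it yields is Theorem~\ref{thm:phiC1}, namely $C_m(f,u_0)\approx C_m(\varphi,0)$ with $\varphi$ defined on $V=V^-\oplus V^0$ and \emph{no} degree shift --- not your shifting isomorphism $C_m(f,u_0)\cong C_{m-m(f,u_0)}(\hat f,0)$ with $\hat f$ living on the kernel $V^0$. Carrying out a genuine min--max splitting directly on $f$ in $\w$ is precisely the ``generalized Morse lemma in Banach spaces'' which, as the introduction explains, is not available; so ``the same reduction'' cannot deliver what you ask of it.

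What closes this gap in the paper --- and what is absent from your proposal --- is Theorem~\ref{thm:phiC2}: for $\kappa>0$ the map $\psi$ is of class $C^1$ (Theorem~\ref{thm:parmin}), hence the reduced functional $\varphi$ is of class $C^2$ on the finite-dimensional space $V$, with $\varphi''(0)[v]^2=Q_{u_0}(v)$, whose Morse index in $V$ is $m(f,u_0)$ (after arranging $V_-\subseteq V$, as in the proof of Theorem~\ref{thm:positivo}) and whose nullity is $m^*(f,u_0)-m(f,u_0)$. Only at that point does the classical $C^2$ shifting theorem become applicable --- to $\varphi$, not to $f$ --- and it is exactly this application (collapse of $V^-$ with the shift by $m(f,u_0)$, followed by your trichotomy on the kernel) that \cite[Corollary~8.4]{mawhin_willem1989} packages, which is why the paper's proof is two lines: Theorem~\ref{thm:phiC1} gives that $0$ is an isolated critical point of $\varphi$ with the same critical groups, and the cited corollary does the rest. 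Your plan can therefore be repaired, but only by inserting the two-stage structure: first the minimization-only reduction, then the $C^2$ regularity of $\varphi$, and finally the finite-dimensional splitting applied to $\varphi$. As written, the shifting isomorphism you start from is unsupported, and it is the crux of the matter.
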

\begin{rem}
\label{rem:p2}
Since the value of $\kappa$ is irrelevant in the case $p=2$,
Theorems~\ref{thm:positivo}, \ref{thm:cornerstone}
and~\ref{thm:positivo2}
cover also the case $\kappa=0$ with $p=2$.
\end{rem}
In the case $\kappa=0$ and $p\neq 2$, we cannot provide such 
a complete description.
Let us mention, however, that critical groups estimates  
have been obtained in~\cite{aftalion_pacella2004-tams}
when $\Omega$ is a ball centered at the origin, and the critical 
point $u_0$ is a positive and radial function such that
$|\nabla u_0( x)|\neq 0$  for $x \neq 0$.
\par
Apart from the radial case, if $p>2$ and $g$ is subjected to 
assumptions that imply~$f$ to be of class $C^2$ on 
$W^{1,p}_0(\Omega)$, it has been proved 
in~\cite[Theorem~3.1]{lancelotti2002} that 
$C_m(f, u_0) = \{0\}$ whenever $m < m(f,u_0)$.
On the contrary, there is no information, in general,
when $p>2$ and $m > m^*(f,u_0)$.
\par
In the case $1<p<2$, the situation turns out to be in some 
sense reversed.
We will prove a result when $m > m^*(f,u_0)$, while we have no 
information, in general, when $m < m(f,u_0)$. 
\begin{thm}
\label{thm:zero}
Let $\kappa=0$ and $1<p<2$.
Let $u_0 \in W^{1,p}_0(\Omega)$ be a
critical point of the functional $f$ defined in~\eqref{eq:f}.
\par
Then we have
\[
C_m(f, u_0) = \{0\}\qquad\text{
whenever $m > m^*(f,u_0)$}\,.
\]
\end{thm}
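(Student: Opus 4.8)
\section*{Proof plan}

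The plan is to compute $C_*(f,u_0)$ by a finite-dimensional reduction onto a subspace of dimension exactly $m^*(f,u_0)$, after which the vanishing in degrees $m>m^*(f,u_0)$ becomes a purely dimensional statement about local cohomology. First I would translate the critical point to the origin by setting $v=u-u_0$ and exploit $f'(u_0)=0$ to write, for $v\in\w$,
\[
f(u_0+v)-f(u_0)=\Phi(v)-R(v)\,,
\]
where
\[
\Phi(v)=\int_\Omega\bigl[\Psi(\nabla u_0+\nabla v)-\Psi(\nabla u_0)-\nabla\Psi(\nabla u_0)\cdot\nabla v\bigr]\,dx\,,
\qquad
R(v)=\int_\Omega\bigl[G(x,u_0+v)-G(x,u_0)-g(x,u_0)v\bigr]\,dx\,.
\]
By $(\Psi_1)$ the map $v\mapsto\Phi(v)$ is \emph{strictly} convex, nonnegative, with $\Phi(0)=0$, and its formal second variation at $0$ is the quadratic part of $Q_{u_0}$; the term $R$ is a lower-order perturbation controlled near $0$, through $(g_1)$, $(g_2)$ and the boundedness of $u_0$, by $\|v\|_{L^2}^2$. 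The decisive feature of the range $1<p<2$ is the lower bound $\Psi''(\eta)[\xi]^2\geq\tilde\nu\,|\xi|^2$ valid where $\nabla u_0\neq0$: it renders $\Phi$ coercive with respect to the $W^{1,2}$-structure of $X_{u_0}$ away from $Z_{u_0}$. This is precisely the control \emph{from below} that is available for $1<p<2$ (and which fails for $p>2$, where instead one controls $m<m(f,u_0)$).

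Next I would diagonalize $Q_{u_0}$ on the Hilbert space $X_{u_0}$. Since the derivative of $Q_{u_0}$ is a compact perturbation of the Riesz isomorphism, $X_{u_0}$ splits as $H^-\oplus H^0\oplus H^+$, with $H^-$ the finite-dimensional negative eigenspace, $H^0$ the kernel, and $H^+$ the positive part on which $Q_{u_0}$ is bounded below by a positive multiple of the norm; here $\dim(H^-\oplus H^0)=m^*(f,u_0)$. Setting $V=H^-\oplus H^0\subseteq X_{u_0}\subseteq W^{1,2}_0(\Omega)\hookrightarrow\w$, I would choose a closed complement $Y$ of $V$ in $\w$, so that $\w=V\oplus Y$. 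The point of placing the \emph{whole} kernel $H^0$ into the parameter space $V$ is that $Q_{u_0}$ is then strictly positive definite on $Y\cap X_{u_0}$, so no degenerate direction survives in the part over which the minimization below is performed; this is exactly why the reduction reaches dimension $m^*(f,u_0)$ and not $m(f,u_0)$.

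The heart of the argument is a parametrized minimization: for each $w\in V$ near $0$ I would show that $y\mapsto f(u_0+w+y)$ has a unique minimizer $y=\phi(w)$ in a small ball of $Y$, with $\phi$ continuous and $\phi(0)=0$. Existence and uniqueness rest on the strict convexity of $\Phi$ combined with the two coercivity mechanisms isolated above: strict positivity of $Q_{u_0}$ on $Y\cap X_{u_0}$ handles the $W^{1,2}$-directions, while the genuinely nonlinear $|\cdot|^p$-growth of $\Psi$ handles the purely $W^{1,p}$-directions of $Y$, the term $R$ being absorbed as a small perturbation near $0$. The reduced functional $\widetilde f(w)=f(u_0+w+\phi(w))$ is then defined on a neighborhood of $0$ in the finite-dimensional space $V$, has $\widetilde f(0)=f(u_0)$ as a critical value, and, by the standard invariance of critical groups under such minimization-reductions (the minimizer $\phi$ yields a deformation retraction of the sublevels of $f$ near $u_0$ onto those of $\widetilde f$), it satisfies $C_m(f,u_0)\approx C_m(\widetilde f,0)$ for every $m$. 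Since $\dim V=m^*(f,u_0)$, the local cohomology $C_m(\widetilde f,0)$ vanishes for every $m>m^*(f,u_0)$ by dimension, which is the assertion.

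The main obstacle I expect is the construction and the regularity of the reduction map $\phi$, carried out across the auxiliary and parametrized-minimization sections. The difficulty is structural: $f$ is only of class $C^1$ on $\w$, the form $Q_{u_0}$ that dictates the splitting lives on the strictly smaller, differently normed space $X_{u_0}\subseteq W^{1,2}_0(\Omega)$, and $\Psi''$ degenerates (indeed blows up) on the set $Z_{u_0}$ where $\nabla u_0=0$. Reconciling the nonlinear $W^{1,p}$-convexity of $\Phi$ with the $W^{1,2}$-quadratic structure of $Q_{u_0}$ — so that the minimizer over $Y$ exists, is unique, depends continuously on $w$, and furnishes a genuine homotopy of sublevels rather than a mere pointwise identity — is the delicate step, and it is precisely what forces the degenerate kernel directions into the parameter space $V$ and restricts the conclusion to the range $m>m^*(f,u_0)$.
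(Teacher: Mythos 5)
Your plan coincides in substance with the paper's own proof: translate to $u_0$, split off a finite-dimensional space $V$ spanned by the nonpositive directions of $Q_{u_0}$ \emph{including the kernel}, so that $\dim V=m^*(f,u_0)$ (the paper's Proposition~\ref{prop:decomposition}), perform a parametric minimization over a complement to obtain a continuous reduction map and a reduced functional $\varphi$ on $V$ (Theorems~\ref{thm:split} and~\ref{thm:parmin}), prove $C_m(f,u_0)\approx C_m(\varphi,0)$ by a Second-Deformation-Lemma type homotopy of sublevels (Theorem~\ref{thm:phiC1}), and conclude that $C_m(\varphi,0)=\{0\}$ for $m>\dim V$ purely by dimension of Alexander--Spanier cohomology. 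The only ingredient you leave unspecified, and which the paper makes essential rather than optional, is the choice of the complement: it is taken as $\widetilde W=\{w\in L^1(\Omega):\int_\Omega vw\,dx=0\ \text{for all } v\in V\}$ intersected with $W^{1,p}_0(\Omega)$, precisely so that the projection onto $V$ is continuous in the $L^1(\Omega)$-topology, since this continuity (together with $V\subseteq L^\infty(\Omega)$, proved by a truncation argument) is what drives the uniform $C^{1,\beta}$ bounds and the $C^1(\overline\Omega)$-versus-$W^{1,p}_0(\Omega)$ local-minimum theorem on which your "delicate step" (existence, uniqueness, continuity of the reduction map, and the sublevel homotopy) actually rests.
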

However, in the case $u_0=0$, we can provide an
optimal result in the line of 
Theorem~\ref{thm:cornerstone}.
\begin{thm}
\label{thm:u=0}
Let $\kappa=0$ and $1<p<2$.
Let $0$ be a critical point of the functional $f$ defined 
in~\eqref{eq:f}.
\par
Then we have $m(f,0)=m^*(f,0)=0$ and $0$ is a strict local 
minimum and an isolated critical point of $f$ with
\[
\left\{
\begin{array}{ll}
C_m(f,0) \approx \G
&\text{if $m = 0$}\,,\\
\noalign{\medskip}
C_m(f, 0) = \{0\}
&\text{if $m \neq 0$}\,.
\end{array}
\right.
\]
\end{thm}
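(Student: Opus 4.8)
The plan is to bypass any Morse-lemma or Hessian machinery and instead show \emph{directly} that $0$ is a strict local minimum of $f$, with a quantitative lower bound of order $\|u\|^p$; isolation and the critical groups will then follow by soft arguments. Throughout I write $\|u\|=\left(\io|\nabla u|^p\,dx\right)^{1/p}$ for the norm of $\w$. First I would record the two algebraic consequences of $(\Psi_1)$ with $\kappa=0$: since $\Psi-\nu\,\Psi_{p,0}$ is convex and vanishes with its gradient at the origin, it is nonnegative and satisfies $\bigl(\nabla\Psi(\xi)-\nu\,\nabla\Psi_{p,0}(\xi)\bigr)\cdot\xi\ge0$, and as $\Psi_{p,0}(\xi)=\frac1p|\xi|^p$ this gives
\[
\Psi(\xi)\ge\frac{\nu}{p}\,|\xi|^p
\qquad\text{and}\qquad
\nabla\Psi(\xi)\cdot\xi\ge\nu\,|\xi|^p
\qquad(\xi\in\R^N)\,.
\]
Moreover, since $0$ is a critical point, $g(x,0)=0$ a.e., so $(g_2)$ yields the pointwise bounds $|g(x,s)|\le\widehat C_S\,|s|$ and $|G(x,s)|\le\frac12\widehat C_S\,s^2$ whenever $|s|\le S$.

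The heart of the matter is to estimate $\io G(x,u)\,dx$ for $\|u\|$ small, and this is exactly where $1<p<2$ enters. I would split $\Omega=\{|u|\le S\}\cup\{|u|>S\}$. On $\{|u|\le S\}$ the inequality $s^2\le S^{2-p}|s|^p$ --- valid \emph{precisely because} $2-p>0$ --- combined with the quadratic bound and Poincar\'e's inequality gives a contribution at most $\frac12\widehat C_S\,S^{2-p}C_P^{\,p}\,\|u\|^p$. On $\{|u|>S\}$ I first absorb the constant term of $(g_1)$ (using $1<|s|^q/S^q$ there) to get $|G(x,s)|\le C(S)\,|s|^{q+1}$, then use $|s|^{q+1}\le S^{q+1-p^*}|s|^{p^*}$ and the embedding $\w\hookrightarrow L^{p^*}$ to bound that contribution by $C(S)\,\|u\|^{p^*}$ (the case $p\ge N$ forces $N=1$ and $\w\hookrightarrow L^\infty$, so only the local quadratic bound is needed). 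Altogether
\[
f(u)\ge\Bigl(\frac{\nu}{p}-\tfrac12\widehat C_S\,S^{2-p}C_P^{\,p}\Bigr)\|u\|^p-C(S)\,\|u\|^{p^*}\,.
\]
I would now \emph{first} fix $S$ so small that $\frac12\widehat C_S\,S^{2-p}C_P^{\,p}<\frac{\nu}{2p}$ (possible since $S^{2-p}\to0$ while $\widehat C_S$ stays bounded as $S\to0$), and \emph{then}, as $p^*>p$, take $\rho>0$ with $C(S)\rho^{\,p^*-p}<\frac{\nu}{4p}$, obtaining $f(u)\ge\frac{\nu}{4p}\|u\|^p>0=f(0)$ for $0<\|u\|\le\rho$. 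Thus $0$ is a strict local minimum.

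Isolation follows by running the identical splitting on $\io g(x,u)u\,dx$ and combining it with $\nabla\Psi(\nabla u)\cdot\nabla u\ge\nu|\nabla u|^p$ to get $f'(u)[u]\ge\frac{\nu}{4}\|u\|^p>0$ for $0<\|u\|\le\rho$; hence no nonzero $u$ near $0$ can be critical. That $m(f,0)=m^*(f,0)=0$ is immediate from the definitions, since for $u_0=0$ one has $Z_{u_0}=\Omega$, whence $X_{u_0}=\{0\}$ and $Q_{u_0}$ lives on the trivial space. Finally, $0$ being an \emph{isolated strict local minimum} is the classical situation: with the neighbourhood above one has $f^{0}\cap\{\|u\|\le\rho\}=\{0\}$, so by excision $C_m(f,0)=H^m(\{0\},\emptyset)$, which is $\G$ for $m=0$ and $\{0\}$ otherwise. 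Equivalently, once $m^*(f,0)=0$ is known one may invoke Theorem~\ref{thm:zero} to kill $C_m(f,0)$ for every $m\ge1$, leaving only $C_0(f,0)\approx\G$ to be read off from the minimum. The single genuine obstacle is the estimate of the previous paragraph: unlike for $p\ge2$, the near-$0$ quadratic behaviour of $G$ produces a term of the \emph{same} order $\|u\|^p$ as the leading part of $f$, and it is only the gain $S^{2-p}$ afforded by $p<2$ --- after localizing to $\{|u|\le S\}$ --- that renders it absorbable, the far region being then controlled by the strictly higher power $\|u\|^{p^*}$.
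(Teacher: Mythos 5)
Your proposal is correct, and it takes a genuinely different route from the paper's. The paper obtains Theorem~\ref{thm:u=0} as the degenerate case of its finite-dimensional reduction: for $u_0=0$, $\kappa=0$, $1<p<2$ one has $Z_{u_0}=\Omega$ and $X_{u_0}=\{0\}$, hence $V=\{0\}$ and $W=W^{1,p}_0(\Omega)$, and the strict local minimality and the isolation are then read off from Theorem~\ref{thm:parmin} --- whose proof rests on Lemma~\ref{lem:QconvW}, the strict convexity statement of Theorem~\ref{thm:split}, the regularity Theorems~\ref{thm:regLinfty} and~\ref{thm:regC1}, and the Brezis--Nirenberg type Theorem~\ref{thm:locminW1p}; the final excision step is identical to yours. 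You instead prove minimality directly and quantitatively, getting $f(u)\geq \frac{\nu}{4p}\,\|\nabla u\|_p^p$ and $\langle f'(u),u\rangle\geq\frac{\nu}{4}\,\|\nabla u\|_p^p$ on a small ball via the splitting $\{|u|\le S\}\cup\{|u|>S\}$, the inequality $u^2\le S^{2-p}|u|^p$ (the only place $p<2$ enters), and the correct quantifier order (fix $S$ so that $\widehat{C}_S S^{2-p}$ is small --- legitimate, since $\widehat{C}_S$ may be taken nondecreasing in $S$ --- and only then shrink $\rho$ to control the higher-order term $\|u\|^{p^*}$, the case $N=1$ being handled by the embedding into $L^\infty$). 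What your argument buys: it is elementary and self-contained, it uses neither hypothesis $(\Psi_2)$ nor the $C^{1,\alpha}$ regularity of $\partial\Omega$ (no regularity theory at all), it accommodates the critical exponent $q=p^*-1$, and it yields an explicit lower bound of order $\|u\|^p$ together with the sign of $\langle f'(u),u\rangle$, which gives isolation for free. What the paper's route buys: zero marginal cost, since the reduction machinery is developed anyway for Theorems~\ref{thm:zero} and \ref{thm:positivo}--\ref{thm:positivo2}, and a conceptual reading of the statement --- the reduction collapses ($V=\{0\}$) exactly because, when $\kappa=0$, $1<p<2$ and $u_0=0$, the quadratic form lives on the trivial space; that is the same phenomenon ($|u|^p$ dominating $u^2$ near zero) that your pointwise inequality isolates. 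Your closing alternative of invoking Theorem~\ref{thm:zero} for $m\geq 1$ would work but is redundant and would reintroduce dependence on the reduction; the direct excision argument you give first is preferable.
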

Finally, under more specific assumptions we can extend 
Theorem~\ref{thm:positivo} to any $\kappa$ and $p$.
This will be enough for the results stated in the Introduction.
\par
\begin{thm}
\label{thm:general}
Let  $\kappa\geq 0$ and $1<p<\infty$.
Let $0$ be an isolated critical point of the functional $f$ 
defined in~\eqref{eq:f} and suppose that $g$ is independent of 
$x$ and satisfies assumption~$(g_1)$ with $q<p^*-1$ in the case
$p<N$.
\par
Then we have
\[
C_m(f, 0) = \{0\}\qquad\text{
whenever $m < m(f,0)$ or $m > m^*(f,0)$}\,.
\]
\end{thm}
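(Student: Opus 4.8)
The plan is to reduce everything to Theorem~\ref{thm:positivo} by discussing separately the values of $\kappa$ and $p$. If $\kappa>0$ the statement is exactly Theorem~\ref{thm:positivo}; if $\kappa=0$ and $p=2$ it follows from Remark~\ref{rem:p2}; and if $\kappa=0$ with $1<p<2$, Theorem~\ref{thm:u=0} already gives $m(f,0)=m^*(f,0)=0$ together with $C_0(f,0)\approx\G$ and $C_m(f,0)=\{0\}$ for $m\neq0$, which is the assertion. The whole matter therefore reduces to the case $\kappa=0$ and $p>2$.

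In this case $(\Psi_2)$ makes $\Psi$ of class $C^2$ on $\R^N$, and since $\Psi$ is squeezed between multiples of $\Psi_{p,0}$ with $p>2$ one gets $\Psi''(0)=0$; as $g$ is independent of $x$, the form entering the Morse index is simply $Q_0(v)=-g'(0)\io v^2\,dx$, so that $m(f,0)$ and $m^*(f,0)$ depend only on the sign of $g'(0)$. If $g'(0)>0$ then $Q_0$ is negative definite on all of $W^{1,2}_0(\Omega)$, whence $m(f,0)=m^*(f,0)=+\infty$ and the claim reduces to $C_m(f,0)=\{0\}$ for \emph{every} finite $m$; if $g'(0)<0$ then $Q_0$ is positive definite, $m(f,0)=m^*(f,0)=0$, and the claim reduces to $C_m(f,0)=\{0\}$ for $m>0$; if $g'(0)=0$ then $m(f,0)=0$, $m^*(f,0)=+\infty$, and both conditions $m<m(f,0)$, $m>m^*(f,0)$ are vacuous. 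Only the first two cases require an argument.

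I would treat them uniformly by removing the degeneracy. For $\e>0$ let $\Psi_\e$ be a nondegenerate approximation of $\Psi$ (for instance $\Psi_\e=\Psi+\tfrac{\e}{2}\,|\cdot|^2$): since $p>2$ the growth at infinity is unaffected while $\Psi_\e''(0)=\e\,\mathrm{Id}$, so $\Psi_\e$ satisfies $(\Psi_1)$ and $(\Psi_2)$ with the same $p$ and a suitable $\kappa_\e>0$. Setting $f_\e(u)=\io\Psi_\e(\nabla u)\,dx-\io G(u)\,dx$, Theorem~\ref{thm:positivo} applies to $f_\e$. Because $g$ is independent of $x$, the quadratic form of $f_\e$ at $0$ is $\e\io|\nabla v|^2\,dx-g'(0)\io v^2\,dx$, so $m(f_\e,0)$ and $m^*(f_\e,0)$ count the Dirichlet eigenvalues $\mu_j$ of $-\Delta$ with $\mu_j<g'(0)/\e$, respectively $\mu_j\le g'(0)/\e$. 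If $g'(0)>0$ then $m(f_\e,0)\to+\infty$ as $\e\to0^+$, so for each fixed $m$ one has $m<m(f_\e,0)$ for $\e$ small and hence $C_m(f_\e,0)=\{0\}$; if $g'(0)<0$ then $m^*(f_\e,0)=0$ for every $\e$, so $C_m(f_\e,0)=\{0\}$ for all $m>0$. In either case $C_m(f_\e,0)=\{0\}$ for the relevant $m$ and all small $\e$.

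It remains to pass to the limit, i.e.\ to prove the stability $C_m(f_\e,0)=C_m(f,0)$ for $\e$ small; this is the step I expect to be the main obstacle. The decisive point is that $0$ be an isolated critical point of $f_\e$ \emph{uniformly} in $\e$, that is, that there exist $\rho>0$ independent of $\e$ such that $f_\e$ has no critical point with $0<\|u\|_{\w}\le\rho$. Here the two extra hypotheses are essential: the strict subcriticality $q<p^*-1$ renders the lower order term and its derivative compact on $\w$, while the autonomy of $g$ permits a scaling argument, so that a hypothetical sequence of nontrivial critical points $u_\e\to0$ of $f_\e$ would, after a suitable rescaling, converge to a nontrivial solution of a limiting equation, contradicting the isolation of $0$ as a critical point of $f$. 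Once uniform isolation is secured, $f_\e\to f$ in $C^1$ on the fixed ball $\{\|u\|_{\w}\le\rho\}$, where a Palais--Smale condition holds uniformly (again by the compactness from $q<p^*-1$); a Gromoll--Meyer pair for $f$ at $0$ then serves, after a small $\e$-dependent deformation, as one for each $f_\e$, and the homotopy invariance and excision of Alexander--Spanier cohomology give $C_m(f_\e,0)=C_m(f,0)$. Combined with the previous step, this completes the reduction to Theorem~\ref{thm:positivo}.
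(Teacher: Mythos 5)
Your reduction to the case $\kappa=0$, $p>2$, the observation that $\Psi''(0)=0$, and the trichotomy on the sign of $g'(0)$ all coincide with the paper's proof, and the cases $g'(0)=0$ and $g'(0)<0$ are essentially fine (the paper settles $g'(0)<0$ more directly: $f$ is strictly convex near $0$ for the $C^1(\overline\Omega)$-topology, so Theorem~\ref{thm:locminW1p} makes $0$ a strict local minimum and the critical groups are those of a point). The fatal gap is the case $g'(0)>0$: the uniform isolation you postulate for $f_\e(u)=\io\Psi_\e(\nabla u)\,dx-\io G(u)\,dx$ with $\Psi_\e=\Psi+\frac{\e}{2}|\cdot|^2$ is not merely the hard step, it is \emph{false}, and with it the stability $C_m(f_\e,0)\approx C_m(f,0)$. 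Indeed, suppose there were $\rho,\bar\e>0$ such that no $f_\e$ with $0<\e\le\bar\e$ has a critical point $u$ with $0<\|\nabla u\|_p\le\rho$. Since $\langle f_\e'(u)-f'(u),v\rangle=\e\io\nabla u\cdot\nabla v\,dx$ is $O(\e)$ uniformly on $\{\|\nabla u\|_p\le\rho\}$ (recall $p>2$), the family $\{f_\e\}_{0\le\e\le\bar\e}$ is continuous into $C^1$ of that ball, each $f_\e'$ is of class $(S)_+$ by \cite[Theorem~3.5]{almi_degiovanni2013}, and $0$ would be the unique critical point of every $f_\e$ there; the homotopy invariance you invoke, \cite[Theorem~5.2]{corvellec_hantoute2002}, would then make $C_m(f_\e,0)$ independent of $\e\in[0,\bar\e]$. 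But that is impossible: as you yourself computed, for $\e>0$ with $g'(0)/\e\notin\sigma(-\Delta)$ the form $\e\io|\nabla v|^2\,dx-g'(0)\io v^2\,dx$ is nondegenerate with $m(f_\e,0)=m^*(f_\e,0)=\#\{j:\,\mu_j<g'(0)/\e\}$ ($\mu_j$ the Dirichlet eigenvalues of $-\Delta$), so Theorem~\ref{thm:cornerstone} — which does apply to $f_\e$, since $\Psi_\e$ satisfies $(\Psi_1)$--$(\Psi_2)$ with a suitable $\kappa_\e>0$ — gives $C_m(f_\e,0)\approx\G$ for $m=m(f_\e,0)$ and $\{0\}$ otherwise, with $m(f_\e,0)\to+\infty$ as $\e\to0^+$. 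Two admissible values $\e',\e''\le\bar\e$ with different indices have critical groups concentrated in different degrees, a contradiction. So nontrivial critical points of $f_\e$ necessarily bifurcate into every ball around $0$ as $\e\to0^+$: your perturbation changes the local structure of $f$ at $0$ (it replaces an infinite Morse index by finite, $\e$-dependent ones), which is precisely what stability of critical groups forbids. The rescaling $u_\e/\|u_\e\|$ cannot repair this: $(\Psi_1)$--$(\Psi_2)$ do not make the general $\Psi$ of~\eqref{eq:f} asymptotically $p$-homogeneous at $0$, and in any case a nontrivial solution of a limit equation is not a critical point of $f$, so no contradiction with the isolation of $0$ for $f$ would ensue.

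The paper's perturbation is chosen exactly to avoid this phenomenon: it truncates the nonlinearity, $g_t(s)=g(\vartheta(ts)s)$, instead of regularizing the principal part. Then $f_t$ is of class $C^2$ for $t\in\,]0,1]$ with $f_t''(0)[v]^2=-g'(0)\io v^2\,dx$ negative definite, so $C_m(f_t,0)=\{0\}$ for every $m$ by \cite[Theorem~3.1]{lancelotti2002}; and, crucially, $f_t$ \emph{coincides} with $f$ on $\{\|u\|_\infty\le 1/t\}$, so the $L^\infty$-bound of Theorem~\ref{thm:regLinfty} (this is where $q<p^*-1$ enters) forces every critical point of $f_t$ in a small $\w$-ball to be a critical point of $f$ itself; uniform isolation then follows from the assumed isolation of $0$ for $f$, and only at that point is \cite[Theorem~5.2]{corvellec_hantoute2002} legitimately applicable. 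Any perturbation argument for this theorem must share that feature — agreement of the perturbed functional with $f$ wherever its small critical points can live — and yours does not.
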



\section{Some auxiliary results}
\label{sect:aux}
In the following, for any $q\in[1,\infty]$ we will
denote by $\|~\|_q$ the usual norm in $L^q(\Omega)$.
We also set, for any $u\in C^{1,\alpha}(\overline\Omega)$,
\[
\|u\|_{C^{1,\alpha}} =
\sup_{\Omega} |u| + \sup_{\Omega} |\nabla u| +
\sup_{\substack{x,y\in\Omega \\ x\neq y}}
\dfrac{|\nabla u(x)-\nabla u(y)|}{|x-y|^{\alpha}}\,.
\]
Throughout this section, we assume that $\Omega$ is a bounded
open subset of $\R^N$ and we suppose that $\Psi$ and $g$ 
satisfy assumptions~$(\Psi_1)$, $(\Psi_2)$ and~$(g_1)$,
without any further restriction on~$p$ and $\kappa$.
\par
In the first part, we adapt to our setting some regularity results
from~\cite{guedda_veron1989, dibenedetto1983, lieberman1988,
tolksdorf1983, tolksdorf1984}.
\begin{thm}
\label{thm:regLinfty}
For every $u_0\in W^{1,p}_0(\Omega)$, there exists $r>0$ such 
that, for any $u\in W^{1,p}_0(\Omega)$ and 
$w\in W^{-1,\infty}(\Omega)$ satisfying
\[
\left\{
\begin{array}{l}
\displaystyle{
\int_\Omega \left[
\nabla\Psi(\nabla u)\cdot\nabla v - g(x,u)v\right]\,dx 
\leq \langle w,v\rangle} \\
\noalign{\medskip}
\qquad\qquad\qquad\qquad
\text{for any $v\in W^{1,p}_0(\Omega)$ with
$vu\geq 0$ a.e. in $\Omega$}\,,\\
\noalign{\medskip}
\|\nabla u-\nabla u_0\|_p\leq r\,,
\end{array}
\right.
\]
we have $u\in L^{\infty}(\Omega)$ and
\[
\|u\|_{\infty} \leq
C\left(\|w\|_{W^{-1,\infty}}\right)\,.
\]
\end{thm}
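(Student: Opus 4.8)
The plan is to establish the bound by the De Giorgi--Stampacchia truncation method, exploiting the one-sided variational inequality together with the coercivity built into $(\Psi_1)$. First I would record the structural consequence of $(\Psi_1)$: since $\left(\Psi-\nu\,\Psi_{p,\kappa}\right)$ is convex with $\nabla\Psi(0)=0$, one has $\nabla\Psi(\eta)\cdot\eta\geq\nu\,\nabla\Psi_{p,\kappa}(\eta)\cdot\eta=\nu\,(\kappa^2+|\eta|^2)^{\frac{p-2}{2}}|\eta|^2$, from which a coercivity estimate of the form $\nabla\Psi(\eta)\cdot\eta\geq c_0|\eta|^p-c_0'$ follows for suitable $c_0,c_0'>0$ depending only on $\nu,p,\kappa$ (splitting according to $|\eta|\gtrless\kappa$ when $1<p<2$). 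It suffices to bound $u$ from above, the lower bound following by symmetry upon replacing the test functions below by their negatives.

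For $k>0$ set $A_k=\{x\in\Omega:u(x)>k\}$ and take $v=(u-k)^+$; since $v$ is supported where $u>k>0$, it satisfies $vu\geq 0$ and is admissible. Inserting it into the variational inequality and using $\nabla v=\nabla u\,\chi_{A_k}$, the coercivity above gives
\[
c_0\int_{A_k}|\nabla u|^p\,dx \leq c_0'|A_k| + \int_{A_k}g(x,u)(u-k)\,dx + \langle w,v\rangle\,.
\]
Because $W^{-1,\infty}(\Omega)$ is the dual of $W^{1,1}_0(\Omega)$, one has $\langle w,v\rangle\leq\|w\|_{W^{-1,\infty}}\int_{A_k}|\nabla u|\,dx$, and Young's inequality absorbs this into the left-hand side at the cost of a term $C(\|w\|_{W^{-1,\infty}})|A_k|$. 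Writing $R=\|\nabla u_0\|_p+r$ for the resulting uniform bound $\|\nabla u\|_p\leq R$ and applying the Sobolev inequality $\|v\|_{p^*}^p\leq S\int_{A_k}|\nabla u|^p\,dx$ (with $p^*$ replaced by any fixed exponent $>p$ when $p\geq N$), I obtain
\[
\|v\|_{p^*}^p \leq C_1|A_k| + C_2\int_{A_k}\bigl(1+|u|^q\bigr)(u-k)\,dx\,,
\]
where $C_1$ carries the dependence on $\|w\|_{W^{-1,\infty}}$ and $q\leq p^*-1$.

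The main obstacle is the critical case $q=p^*-1$, where the growth term is not of lower order. Here I would use $|u|\leq v+k$ on $A_k$ to estimate $|u|^{p^*-1}(u-k)\leq C(v^{p^*}+k^{p^*-1}v)$, and then, since $v\leq|u|$, bound $\int_{A_k}v^{p^*}\leq\theta_k^{(p^*-p)/p^*}\|v\|_{p^*}^p$ by H\"older, where $\theta_k=\int_{A_k}|u|^{p^*}\,dx$. The decisive point is that $\theta_k$ can be made uniformly small: by Sobolev and $\|\nabla u-\nabla u_0\|_p\leq r$ one has $\|u-u_0\|_{p^*}\leq S_p r$, while $|\{|u|>k\}|\leq(S_p R)^{p^*}k^{-p^*}$ is uniformly small for large $k$, so absolute continuity of $\int|u_0|^{p^*}$ forces $\theta_k$ below any prescribed threshold once $r$ is small and $k$ exceeds a level $k_1$ depending only on $u_0$ and the data. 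Choosing $r$ small and $k\geq k_1$ so that $C\,\theta_k^{(p^*-p)/p^*}\leq\tfrac12$, the critical term is absorbed; this is precisely where the hypothesis ``there exists $r>0$'' enters, and it is the crux of the whole argument.

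It then remains to run the iteration. Together with the remaining lower-order contributions (the summand $1$ and the term $k^{p^*-1}v$, handled by $\int_{A_k}(u-k)\,dx\leq\|v\|_{p^*}|A_k|^{1-1/p^*}$ and a further application of Young's inequality over the bounded range of levels), one arrives at $\|v\|_{p^*}^{p^*}\leq C_5|A_k|^{p^*/p}$ for $k\geq k_1$. For $h>k\geq k_1$ the inclusion $A_h\subseteq A_k$ and $u-k\geq h-k$ on $A_h$ give $|A_h|(h-k)^{p^*}\leq\|v\|_{p^*}^{p^*}$, hence
\[
|A_h| \leq \frac{M}{(h-k)^{p^*}}\,|A_k|^{\,p^*/p}\,,\qquad h>k\geq k_1\,,
\]
with $p^*/p>1$ and $M$ depending on $\|w\|_{W^{-1,\infty}}$ (and on $u_0,p,\kappa,\Omega$ and the structural constants). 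Stampacchia's lemma then provides $d>0$, computable from $M$, $p^*/p$ and $|A_{k_1}|\leq|\Omega|$, such that $|A_{k_1+d}|=0$, i.e.\ $u\leq k_1+d$ a.e.\ in $\Omega$. Since the symmetric argument controls the negative part, this yields $u\in L^\infty(\Omega)$ with $\|u\|_\infty\leq C(\|w\|_{W^{-1,\infty}})$, completing the proof.
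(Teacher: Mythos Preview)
Your strategy differs from the paper's: the paper invokes the Moser-type bootstrap of Guedda--V\'eron (first $u\in L^{q}$ for every $q<\infty$, then $u\in L^{\infty}$), whereas you go for a direct De Giorgi--Stampacchia level-set iteration. The crucial insight is the same in both cases. Your uniform smallness of $\theta_k=\int_{A_k}|u|^{p^*}dx$ (obtained from $\|u-u_0\|_{p^*}\leq S_p r$, Chebyshev, and absolute continuity of $\int|u_0|^{p^*}$) is exactly the paper's statement that $\int_{\{|V|>k\}}|V|^{N/p}dx$ is uniformly small for $k$ large; indeed, writing $g(x,u)=V|u|^{p-2}u$ one has $V\sim|u|^{p^*-p}$ and $(p^*-p)\tfrac{N}{p}=p^*$. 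So the absorption of the pure $\int_{A_k}v^{p^*}$ term is correct and is the heart of the matter.

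The gap is in the sentence ``handled \dots\ over the bounded range of levels''. After the absorption you are left with a term $C\,k^{p^*-1}\int_{A_k}v\,dx$, and via H\"older and Young this produces a constant of order $k^{(p^*-1)p'}$ in the recursion
\[
|A_h|\leq \frac{M(k)}{(h-k)^{p^*}}\,|A_k|^{p^*/p}\,.
\]
Stampacchia's lemma requires $M$ independent of $k$. Your idea that ``the levels stay in $[k_1,k_1+d]$'' is circular: $d$ is determined by $M$, while $M\leq M_0(1+k_1+d)^{(p^*-1)p'p^*/p}$ depends on $d$, and since $(p^*-1)p'/p>1$ for $p<N$ the loop does not close (equivalently, using Chebyshev $|A_k|\lesssim k^{-p^*}$ one finds $k^{(p^*-1)p'}|A_k|^{\alpha-1}\sim k^{p^*}\to\infty$, so you cannot reduce to the clean inequality $\|v\|_{p^*}^p\leq C|A_k|$).

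The fix is to insert one Moser step rather than pass to $L^{\infty}$ directly. Using the same smallness of $\theta_k$ (equivalently of $\|V\|_{N/p}$ on the tail), a single test with a truncated power $v=u\min\{|u|,T\}^{\varepsilon p}$ yields $u\in L^{q_0}(\Omega)$ for some $q_0>p^*$ with a uniform bound (this is precisely the content of \cite[Proposition~1.2]{guedda_veron1989}, which the paper highlights). Once $u\in L^{q_0}$ with $q_0>p^*$, the right-hand side is strictly subcritical and your Stampacchia iteration runs with a $k$-independent constant, giving $u\in L^{\infty}$ as claimed.
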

\begin{proof}
We only sketch the proof the case $1<p<N$.
The case $p\geq N$ is similar and even simpler.
Since $\left(\Psi-\nu\Psi_{p,\kappa}\right)$ is convex, we have
\[
\nabla\Psi(\xi)\cdot\xi = (\nabla\Psi(\xi)-\nabla\Psi(0))\cdot\xi
\geq
\nu \left(\kappa^2 + |\xi|^2\right)^{\frac{p-2}{2}}\,|\xi|^2
\qquad\text{for every $\xi\in\R^N$}\,.
\]
Then the argument is the same
of~\cite[Corollary~1.1]{guedda_veron1989}.
We only have to remark that, for every $V_0 \in L^{N/p}(\Omega)$
and $q<\infty$, there exists $r>0$ such that,
for any $u\in W^{1,p}_0(\Omega)$, $V \in L^{N/p}(\Omega)$ 
and $w\in W^{-1,\infty}(\Omega)$ satisfying
\[
\left\{
\begin{array}{l}
\displaystyle{
\int_\Omega \left[
\nabla\Psi(\nabla u)\cdot\nabla v - V|u|^{p-2}u v\right]\,dx 
\leq \langle w,v\rangle} \\
\noalign{\medskip}
\qquad\qquad\qquad\qquad
\text{for any $v\in W^{1,p}_0(\Omega)$ with
$vu\geq 0$ a.e. in $\Omega$}\,,\\
\noalign{\medskip}
\|V-V_0\|_{N/p}\leq r\,,
\end{array}
\right.
\]
we have $u\in L^q(\Omega)$ and
\[
\|u\|_q \leq C\left(q,\|u\|_{p^*},\|w\|_{W^{-1,\infty}}\right)
\]
(see, in particular, 
\cite[Proposition~1.2 and Remark~1.1]{guedda_veron1989}).
The key point is that, for any $\varepsilon>0$, there exist
$r,\overline k$ such that
\[
k\geq \overline k\quad\Longrightarrow\quad
\int_{\{|V|>k\}} |V|^{N/p}\,dx \leq \varepsilon
\]
whenever $\|V-V_0\|_{N/p}\leq r$.
After removing the dependence on $V$
in~\cite[Proposition~1.2]{guedda_veron1989}, hence on $u$
in~\cite[Corollary~1.1]{guedda_veron1989}, the argument is
the same of~\cite{guedda_veron1989}.
\end{proof}
\begin{thm}
\label{thm:regC1}
Assume that $\partial\Omega$ is of class $C^{1,\alpha}$ for some
$\alpha\in]0,1]$.
Then there exists $\beta\in]0,1]$ such that any solution $u$ of
\[
\left\{
\begin{array}{ll}
u\in W^{1,p}_0(\Omega)\,,\\
\noalign{\medskip}
- \, \dvg[\nabla\Psi(\nabla u)] = w_0 - \dvg w_1
&\qquad\text{in $W^{-1,p'}(\Omega)$}\,,
\end{array}
\right.
\]
with $w_0\in L^{\infty}(\Omega)$
and $w_1\in C^{0,\alpha}(\overline\Omega;\R^N)$,
belongs also to $C^{1,\beta}(\overline\Omega)$ and we have
\[
\|u\|_{C^{1,\beta}} \leq
C\left(\|w_0\|_{\infty},\|w_1\|_{C^{0,\alpha}}\right)\,.
\]
\end{thm}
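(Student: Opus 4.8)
The plan is to recognise the equation as a quasilinear problem of $p$-Laplacian type with Hölder continuous coefficients and a bounded datum, and then to invoke the classical interior and boundary $C^{1,\beta}$ theory of \cite{dibenedetto1983, tolksdorf1983, tolksdorf1984, lieberman1988}. The first move is to absorb the term $-\dvg w_1$ into the principal part. Setting $\mathbf{A}(x,\xi)=\nabla\Psi(\xi)-w_1(x)$, the equation reads $-\dvg[\mathbf{A}(x,\nabla u)]=w_0$ in $W^{-1,p'}(\Omega)$, where now $w_0\in L^{\infty}(\Omega)$ plays the role of a bounded right-hand side and the whole $x$-dependence of $\mathbf{A}$ is carried by $w_1\in C^{0,\alpha}(\overline\Omega;\R^N)$. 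Since $D_\xi\mathbf{A}(x,\xi)=\Psi''(\xi)$, this modification leaves the ellipticity untouched and only introduces a Hölder dependence on $x$, which is exactly what the structure conditions of the quoted references admit.

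Before invoking the gradient estimates one needs $u\in L^{\infty}(\Omega)$. As $w_0\in L^{\infty}(\Omega)$ and $w_1\in C^{0,\alpha}(\overline\Omega;\R^N)\subset L^{\infty}(\Omega;\R^N)$, the distribution $w_0-\dvg w_1$ belongs to $W^{-1,\infty}(\Omega)$, and the $L^{\infty}$ bound follows from Theorem~\ref{thm:regLinfty} applied with $g\equiv 0$ and with the choice $u_0=u$ (so that the localization $\|\nabla u-\nabla u_0\|_p\leq r$ holds trivially and the variational inequality there reduces to the equation here), giving $\|u\|_{\infty}\leq C(\|w_0\|_{\infty},\|w_1\|_{\infty})$. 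This is the step where the Stampacchia/Guedda--Veron truncation argument of \cite{guedda_veron1989} enters, through Theorem~\ref{thm:regLinfty}.

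With $u$ bounded, the decisive point is to verify that $\mathbf{A}$ satisfies the structure conditions of the $C^{1,\beta}$ theory. From $(\Psi_1)$ one has the two-sided ellipticity
\[
\tilde\nu\,(\kappa^2+|\eta|^2)^{\frac{p-2}{2}}|\xi|^2\leq \Psi''(\eta)[\xi]^2\leq \tilde C\,(\kappa^2+|\eta|^2)^{\frac{p-2}{2}}|\xi|^2
\]
together with the matching growth of $\nabla\Psi$ and the monotonicity $\nabla\Psi(\xi)\cdot\xi\geq\nu(\kappa^2+|\xi|^2)^{\frac{p-2}{2}}|\xi|^2$; by $(\Psi_2)$ the map $\mathbf{A}(x,\cdot)$ is of class $C^1$ off the origin (and everywhere when $\kappa>0$ or $p\geq 2$), while it is $\alpha$-Hölder in $x$. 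These are precisely the hypotheses of the interior estimate of \cite{dibenedetto1983, tolksdorf1983}, which yields $u\in C^{1,\beta}_{\mathrm{loc}}(\Omega)$, and of the boundary estimate of \cite{lieberman1988} (see also \cite{tolksdorf1984}), which, using $\partial\Omega\in C^{1,\alpha}$, provides $C^{1,\beta}$ regularity up to $\partial\Omega$. Patching the interior and the boundary estimates gives $u\in C^{1,\beta}(\overline\Omega)$ with $\|u\|_{C^{1,\beta}}$ controlled by $\|u\|_{\infty}$, $\|w_0\|_{\infty}$ and $\|w_1\|_{C^{0,\alpha}}$; combined with the $L^{\infty}$ bound of the previous step, this is the asserted dependence on $\|w_0\|_{\infty}$ and $\|w_1\|_{C^{0,\alpha}}$ only.

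The main obstacle is the degenerate/singular nature of the operator when $\kappa=0$ and $p\neq 2$: there $\Psi$ fails to be $C^2$ at the origin (for $1<p<2$) and the ellipticity constants degenerate as $|\nabla u|\to 0$ (for $p>2$). One must therefore apply the structure conditions in the form tailored to the $p$-Laplacian, where precisely these features are admissible, rather than in a uniformly elliptic form; it then has to be checked that the inclusion of the Hölder term $w_1$ remains within that admissible class, which it does, since it only perturbs the $x$-dependence and leaves $\Psi''$ unchanged.
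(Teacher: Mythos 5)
Your strategy (absorb $w_1$ into the operator, get an $L^\infty$ bound, then quote the $C^{1,\beta}$ theory) is viable and would reach the conclusion, but it is genuinely different from the paper's proof, which never applies the quoted regularity theorems to $\Psi$ itself. The paper first gets existence, uniqueness and an $L^\infty$ bound for the solution from Ladyzhenskaya--Ural'tseva \cite{ladyzhenskaya_uraltseva1968}; it then mollifies the integrand, showing that $R_{\e}\Psi$ is smooth with the two-sided Hessian bounds~\eqref{eq:RPsi''} of the form $(\e+\kappa+|\eta|)^{p-2}|\xi|^2$, \emph{uniformly} in $\e$; it solves the regularized problems $-\,\dvg[\nabla(R_{\e}\Psi)(\nabla u_{\e})]=w_0-\dvg w_1$, applies \cite[Theorem~1]{lieberman1988} only to these smooth, nondegenerate operators (with constants independent of $\e\in]0,1]$), and finally lets $\e\to 0$, identifying the $C^1(\overline\Omega)$-limit of $(u_{\e})$ with $u$ through uniqueness, which comes from the strict convexity of $\Psi$. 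This detour buys exactly what your last paragraph only gestures at: under $(\Psi_1)$--$(\Psi_2)$ the integrand is a general function, not $\Psi_{p,\kappa}$; it fails to be $C^2$ at the origin when $\kappa=0$ and $1<p<2$, and when $\kappa>0$, $p\neq 2$ its Hessian bounds $(\kappa^2+|\eta|^2)^{\frac{p-2}{2}}$ fit neither the purely degenerate structure conditions of \cite{dibenedetto1983, tolksdorf1984} (which are modeled on $|\eta|^{p-2}$) nor uniform ellipticity, so of your four references only Lieberman's general structure applies verbatim. Your route is shorter and needs no limit procedure, but it rests entirely on that verbatim applicability, which is precisely the verification the paper chose to bypass by regularizing.

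There is also one concrete gap in your $L^\infty$ step. Theorem~\ref{thm:regLinfty} is a statement of the form ``for every $u_0$ there exists $r>0$ such that \dots'', and its constant depends on $u_0$: in the Guedda--V\'eron iteration behind it, the bound is $\|u\|_q\leq C\bigl(q,\|u\|_{p^*},\|w\|_{W^{-1,\infty}}\bigr)$. Applying it with $u_0=u$ (a legitimate trick to kill the localization) does give $u\in L^{\infty}(\Omega)$, but with a constant depending on $u$ itself, so your asserted bound $\|u\|_{\infty}\leq C(\|w_0\|_{\infty},\|w_1\|_{\infty})$ --- and hence the final estimate of the theorem, whose whole point is dependence on the data alone --- does not follow as written. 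To repair it, first test the equation with $u$: using $\nabla\Psi(\xi)\cdot\xi\geq\nu\left(\kappa^2+|\xi|^2\right)^{\frac{p-2}{2}}|\xi|^2$ and Poincar\'e's inequality one gets $\|\nabla u\|_p\leq C(\|w_0\|_{\infty},\|w_1\|_{\infty})$, after which the iteration constant is controlled by the data; alternatively, quote \cite{ladyzhenskaya_uraltseva1968} directly, as the paper does. Note that the paper needs its existence/uniqueness step anyway, since without uniqueness it could not identify the limit of the regularized solutions with the given solution $u$.
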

\begin{proof}
Since $\Psi$ is strictly convex and
\[
\nabla\Psi(\xi)\cdot\xi  \geq
\nu \left(\kappa^2 + |\xi|^2\right)^{\frac{p-2}{2}}\,|\xi|^2
\qquad\text{for every $\xi\in\R^N$,}
\]
it is standard that, for every $w_0\in L^{\infty}(\Omega)$
and $w_1\in C^{0,\alpha}(\overline\Omega;\R^N)$,
there exists one and only one
$u\in W^{1,p}_0(\Omega)\cap L^{\infty}(\Omega)$ such that
$- \, \dvg[\nabla\Psi(\nabla u)] = w_0 - \dvg w_1$.
Moreover, we have
\[
\|u\|_{\infty} \leq
C\left(\|w_0\|_{\infty},\|w_1\|_{C^{0,\alpha}}\right)
\]
(see e.g.~\cite{ladyzhenskaya_uraltseva1968}).
\par
Now, for every $N\geq 1$, fix a nonnegative smooth function
$\varrho$ with compact support in the unit ball of $\R^N$ and
unit integral.
Then define, for every $\Phi\in L^1_{loc}(\R^N)$ and
$\varepsilon>0$,
\[
(R_{\varepsilon}\Phi)(\xi) =
\int \varrho(y) \Phi(\xi-\varepsilon y) \,dy\,.
\]
It is easily seen that there exist
$0<\check{\nu}(N,p)\leq \check{C}(N,p)$ such that
\begin{gather*}
\check{\nu} (1+|\xi|)^{p-2} \leq
\int \varrho(y) (1+|\xi - t y|)^{p-2} \,dy \leq
\check{C} (1+|\xi|)^{p-2}\,, \\
\check{\nu} (1+|\xi|)^{p-2} \leq
\int \varrho(y) (t+|\xi - y|)^{p-2} \,dy \leq
\check{C} (1+|\xi|)^{p-2}\,,
\end{gather*}
for every $t\in[0,1]$ and $\xi\in\R^N$.
Then there exist
$0<\hat{\nu}(N,p)\leq \widehat{C}(N,p)$ such that
\[
\hat{\nu} (\varepsilon+\kappa+|\xi|)^{p-2} \leq
\int \varrho(y) (\kappa+|\xi - \varepsilon y|)^{p-2}
\,dy \leq
\widehat{C} (\varepsilon+\kappa+|\xi|)^{p-2}\,,
\]
for every $\kappa\geq 0$, $\varepsilon>0$ and $\xi\in\R^N$.
\par
Observe that
$\Psi_{p,\kappa} \in W^{2,1}_{loc}(\R^N)$ and
\begin{multline*}
\frac{p-1}{2}\,
\left(\kappa+|\eta|\right)^{p-2}\,|\xi|^2\leq
\Psi''_{p,\kappa}(\eta)[\xi]^2
\leq
\left(\kappa+|\eta|\right)^{p-2}\,|\xi|^2 \\
\qquad\text{for every $\eta, \xi\in\R^N$ with $\eta\neq 0$.}
\end{multline*}
It follows that there exist
$0<\tilde{\nu}(N,p)\leq \widetilde{C}(N,p)$ such that
\[
\tilde{\nu} (\varepsilon+\kappa+|\eta|)^{p-2} |\xi|^2 \leq
(R_{\varepsilon}\Psi_{p,\kappa})''(\eta)[\xi]^2
\leq
\widetilde{C} (\varepsilon+\kappa+|\eta|)^{p-2} |\xi|^2\,,
\]
for every $\kappa\geq 0$, $\varepsilon>0$ and $\xi,\eta\in\R^N$.
Since $\left(\Psi-\nu\,\Psi_{p,\kappa}\right)$ and
$\left(C\,\Psi_{p,\kappa}-\Psi\right)$ are both convex,
we infer that $R_{\varepsilon}\Psi:\R^N\rightarrow \R$ is a
smooth function satisfying
\begin{equation}
\label{eq:RPsi''}
\nu \tilde{\nu} (\varepsilon+\kappa+|\eta|)^{p-2} |\xi|^2
\leq
(R_{\varepsilon}\Psi)''(\eta)[\xi]^2
\leq
C \widetilde{C} (\varepsilon+\kappa+|\eta|)^{p-2} |\xi|^2\,,
\end{equation}
for every $\varepsilon>0$ and $\xi,\eta\in\R^N$.
\par
Again, from the results of~\cite{ladyzhenskaya_uraltseva1968},
it follows that there exists one and only one
$u_{\varepsilon}\in W^{1,p}_0(\Omega)\cap L^{\infty}(\Omega)$
such that
$-\,\dvg[\nabla(R_{\varepsilon}\Psi)(\nabla u_{\varepsilon})] =
w_0 - \dvg w_1$.
Moreover, we have
\[
\|u_{\varepsilon}\|_{\infty} \leq
C\left(\|w_0\|_{\infty},\|w_1\|_{C^{0,\alpha}}\right)
\]
and the estimate is independent of $\varepsilon$ for, say,
$0<\varepsilon\leq 1$.
\par
Then from~\eqref{eq:RPsi''} and~\cite[Theorem~1]{lieberman1988} we
infer that $u_{\varepsilon}\in C^{1,\beta}(\overline{\Omega})$ and
\[
\|u_{\varepsilon}\|_{C^{1,\beta}} \leq
C\left(\|w_0\|_{\infty},\|w_1\|_{C^{0,\alpha}}\right)
\]
for some $\beta\in]0,1]$, again with
an estimate independent of $\varepsilon\in ]0,1]$.
\par
Therefore $(u_{\varepsilon})$ is convergent, as $\varepsilon\to 0$,
to $u$ in $C^1(\overline{\Omega})$ and the assertion follows.
\end{proof}
From~\eqref{eq:RPsi''} we also infer the next result.
\begin{prop}
\label{prop:Psireg}
We have $\Psi\in W^{2,q}_{loc}(\R^N)$ for some $q>N$,
so that the map $\nabla\Psi:\R^N\rightarrow\R^N$ is locally
H\"older continuous.
\end{prop}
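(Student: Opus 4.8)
The plan is to transfer the uniform Hessian estimate~\eqref{eq:RPsi''} for the mollified functions $R_{\varepsilon}\Psi$ to a bound in $L^q$ with some exponent $q>N$, uniformly in $\varepsilon\in]0,1]$, and then to pass to the limit $\varepsilon\to 0$. First I would record that, since each $(R_{\varepsilon}\Psi)''(\eta)$ is a symmetric positive definite matrix by~\eqref{eq:RPsi''}, its operator norm equals its largest eigenvalue; hence the upper bound in~\eqref{eq:RPsi''} gives
\[
\left|(R_{\varepsilon}\Psi)''(\eta)\right| \leq
C\widetilde{C}\,(\varepsilon+\kappa+|\eta|)^{p-2}
\qquad\text{for every $\eta\in\R^N$ and $\varepsilon>0$}\,.
\]
Fixing $R>0$, the goal is then to bound $\int_{\ball{0}{R}}|(R_{\varepsilon}\Psi)''(\eta)|^q\,d\eta$ by a constant independent of $\varepsilon\in]0,1]$, for a suitable $q>N$.

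When $p\geq 2$ or $\kappa>0$, the quantity $(\varepsilon+\kappa+|\eta|)^{p-2}$ stays bounded on $\ball{0}{R}$ uniformly for $0<\varepsilon\leq 1$, so the desired bound holds for every $q<\infty$. The delicate case, which I expect to be the crux, is $\kappa=0$ with $1<p<2$: here $p-2<0$ and $(\varepsilon+|\eta|)^{p-2}\leq|\eta|^{p-2}$, so everything reduces to the local integrability of $|\eta|^{(p-2)q}$ near the origin, which holds precisely when $(2-p)q<N$. Since $p>1$ forces $2-p<1$ and hence $N/(2-p)>N$, any exponent $q\in]N,N/(2-p)[$ is simultaneously larger than $N$ and admissible; this is exactly where the assumption $p>1$ is used.

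Finally, I would pass to the limit. Because $\Psi$ is of class $C^1$, one has $R_{\varepsilon}\Psi\to\Psi$ and $\nabla(R_{\varepsilon}\Psi)=R_{\varepsilon}(\nabla\Psi)\to\nabla\Psi$ locally uniformly as $\varepsilon\to 0$. Since $1<q<\infty$, the space $L^q(\ball{0}{R})$ is reflexive, so the uniform bound yields a sequence $\varepsilon_j\to 0$ along which $(R_{\varepsilon_j}\Psi)''$ converges weakly in $L^q(\ball{0}{R})$ to some $H$; passing to the limit in the distributional identity linking first and second derivatives identifies $H$ with the distributional Hessian of $\Psi$, whence $\Psi\in W^{2,q}(\ball{0}{R})$. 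As $R>0$ is arbitrary, $\Psi\in W^{2,q}_{loc}(\R^N)$ with $q>N$. The remaining assertion then follows from Morrey's embedding $W^{1,q}_{loc}\hookrightarrow C^{0,1-N/q}_{loc}$ applied to $\nabla\Psi\in W^{1,q}_{loc}(\R^N;\R^N)$, which yields the local H\"older continuity of $\nabla\Psi$.
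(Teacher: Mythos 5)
Your proposal is correct and follows exactly the route the paper intends: the paper's own proof consists of the single remark that the proposition follows from the uniform two-sided Hessian bound~\eqref{eq:RPsi''} on the mollifications $R_{\varepsilon}\Psi$, and your argument (uniform local $L^q$ bounds with $q\in\left]N,N/(2-p)\right[$ in the critical case $\kappa=0$, $1<p<2$, weak compactness to identify the distributional Hessian, then Morrey's embedding) is precisely the detailed version of that inference.
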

\par\smallskip
Now let $X$ be a reflexive Banach space.
The next concept is taken from~\cite{browder1983, skrypnik1994}.
\begin{defn}
\label{defn:S+}
Let $D \subseteq X$.
A map $F:D \rightarrow X'$ is said to be \emph{of class~$(S)_+$}
if, for every sequence $(u_k)$ in $D$ weakly convergent to $u$
in $X$ with
\[
\limsup_k \,\langle F(u_k),u_k-u\rangle \leq 0\,,
\]
we have $\|u_k-u\|\to 0$.
\end{defn}
\begin{prop}
\label{prop:S+}
Let $f:X\rightarrow \R$ be a function of class~$C^1$and let $C$
be a closed and convex subset of $X$.
Assume that $f'$ is of class~$(S)_+$ on~$C$.
\par
Then the following facts hold:
\begin{itemize}
\item[$(a)$]
$f$ is sequentially lower semicontinuos on $C$
with respect to the weak topology;
\item[$(b)$]
if $(u_k)$ is a sequence in $C$ weakly convergent to $u$ with
\[
\limsup_k \,f(u_k) \leq f(u)\,,
\]
we have $\|u_k-u\|\to 0$;
\item[$(c)$]
any bounded sequence $(u_k)$ in $C$, with
$\|f'(u_k)\|\to 0$, admits a convergent subsequence.
\end{itemize}
\end{prop}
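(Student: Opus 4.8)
The plan is to prove $(c)$ first and independently, then $(a)$, and to obtain $(b)$ from $(a)$ together with the $(S)_+$ property. The only tools needed are the mean value theorem applied to $f$ along segments (which remain in $C$ by convexity), the continuity of $f'$ coming from $f\in C^1$, and the fact that the closed convex set $C$ is weakly closed, so that weak subsequential limits of sequences in $C$ again belong to $C$. For $(c)$, given a bounded sequence $(u_k)$ in $C$ with $\|f'(u_k)\|\to 0$, reflexivity of $X$ lets me extract $u_{k_j}\rightharpoonup u\in C$; then $\langle f'(u_{k_j}),u_{k_j}-u\rangle\to 0$, since $\|f'(u_{k_j})\|\to 0$ while $\|u_{k_j}-u\|$ stays bounded, so $\limsup_j\langle f'(u_{k_j}),u_{k_j}-u\rangle\le 0$ and $(S)_+$ yields $u_{k_j}\to u$.

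For $(a)$ I argue by contradiction. If $u_k\rightharpoonup u$ but $\liminf_k f(u_k)<f(u)$, pass to a subsequence with $f(u_k)\to\ell<f(u)$. The mean value theorem applied to $t\mapsto f(u+t(u_k-u))$ on $[0,1]$ furnishes $t_k\in(0,1)$ and $w_k=u+t_k(u_k-u)\in C$ with $\langle f'(w_k),u_k-u\rangle=f(u_k)-f(u)$. Since $w_k-u=t_k(u_k-u)$ with $t_k$ bounded, one has $w_k\rightharpoonup u$, while $\langle f'(w_k),w_k-u\rangle=t_k\,(f(u_k)-f(u))$ has nonpositive $\limsup$; hence $(S)_+$ forces $w_k\to u$ strongly. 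Then $f'(w_k)\to f'(u)$ by continuity, so $\langle f'(w_k),u_k-u\rangle\to 0$ (a strongly convergent functional paired with a weakly null sequence), contradicting $\langle f'(w_k),u_k-u\rangle=f(u_k)-f(u)\to\ell-f(u)\neq 0$.

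For $(b)$, suppose $u_k\rightharpoonup u$ with $\limsup_k f(u_k)\le f(u)$ but, for contradiction, $\|u_k-u\|\ge d>0$ along a subsequence, and set $R=\sup_k\|u_k-u\|<\infty$. The idea is to replace $u_k$ by rescaled intermediate points $y_k=u+s_k(u_k-u)$ with $s_k=\tfrac{d}{2\|u_k-u\|}\in[\tfrac{d}{2R},\tfrac12]$, so that $s_k$ is bounded away from both $0$ and $1$ and $y_k\rightharpoonup u$. By $(a)$ one has $f(y_k)\ge f(u)-o(1)$, and the mean value theorem on the outer segment $[s_k,1]$ produces $r_k\in(s_k,1)$ with
\[
\langle f'(\zeta_k),\zeta_k-u\rangle=r_k\,\frac{f(u_k)-f(y_k)}{1-s_k}\,,\qquad\zeta_k=u+r_k(u_k-u)\,.
\]
Since $1-s_k\ge\tfrac12$ and $f(u_k)-f(y_k)\le\bigl(f(u_k)-f(u)\bigr)+o(1)$ has nonpositive $\limsup$, the right-hand side has $\limsup\le 0$; as $\zeta_k\rightharpoonup u$, the $(S)_+$ property gives $\zeta_k\to u$ strongly. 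But $r_k\ge s_k\ge\tfrac{d}{2R}$ is bounded away from $0$, whence $\|u_k-u\|=\|\zeta_k-u\|/r_k\to 0$, contradicting $\|u_k-u\|\ge d$.

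The main obstacle is precisely part $(b)$: one would like to feed $(u_k)$ itself into $(S)_+$, but $\limsup_k\langle f'(u_k),u_k-u\rangle\le 0$ is not available directly, and since $f$ is not convex and $f'$ need not be bounded on bounded sets, neither $f$ nor $f'$ is controlled along the segment a priori. The device that resolves this is to pin the segment parameter into $[\tfrac{d}{2R},\tfrac12]$ by the rescaling above and to combine the mean value estimate on $[s_k,1]$ with the lower bound on $f(y_k)$ furnished by $(a)$; this is what makes the quotient defining $\langle f'(\zeta_k),\zeta_k-u\rangle$ both well controlled at the denominator and asymptotically nonpositive at the numerator.
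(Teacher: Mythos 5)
Your proposal is correct and follows essentially the same route as the paper's proof: the mean value theorem along segments of $C$, the $(S)_+$ property applied to intermediate points whose parameter is kept bounded away from $0$, part $(a)$ feeding the lower bound at the intermediate point into part $(b)$, and the same direct argument for $(c)$. The only difference is cosmetic: for $(b)$ the paper takes the fixed midpoint $\frac{1}{2}u_k+\frac{1}{2}u$, so the MVT point automatically has parameter $\tau_k\in\left]\frac{1}{2},1\right[$ bounded away from zero, which turns your contradiction-plus-rescaling argument (the $s_k=d/(2\|u_k-u\|)$ device) into a shorter direct one.
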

\begin{proof}
Let $(u_k)$ be a sequence in $C$ weakly convergent to $u$.
To prove~$(a)$ we may assume, without loss of generality, that
\[
\limsup_k\, f(u_k) \leq f(u)\,.
\]
Let $t_k\in]0,1[$ be such that
\[
f(u_k) = f(u) + \langle f'(v_k),u_k-u\rangle\,,
\qquad v_k=u+t_k(u_k-u)\,.
\]
Then $(v_k)$ also is a sequence in $C$ weakly convergent
to $u$ and
\[
\limsup_k\,\langle f'(v_k),v_k-u\rangle =
\limsup_k\,t_k\langle f'(v_k),u_k-u\rangle =
\limsup_k\,t_k\left(f(u_k)-f(u)\right) \leq 0 \,.
\]
Since $f'$ is of class~$(S)_+$ on $C$, we infer that
$\|v_k-u\|\to 0$, hence that
\[
\lim_k f(u_k) =
\lim_k \left[f(u) + \langle f'(v_k),u_k-u\rangle\right]
= f(u)
\]
and assertion~$(a)$ follows.
\par
To prove~$(b)$, let $\tau_k\in\left]\frac{1}{2},1\right[$
be such that
\[
f(u_k) - f\left(\frac{1}{2}\,u_k+\frac{1}{2}\,u\right) =
\frac{1}{2}\,\langle f'(w_k),u_k-u\rangle \,,
\qquad w_k = u+\tau_k(u_k-u)\,.
\]
Observe that
$\left(\frac{1}{2}\,u_k+\frac{1}{2}\,u\right)$ also is a sequence
in $C$ weakly convergent to $u$, whence
\[
\liminf_k f\left(\frac{1}{2}\,u_k+\frac{1}{2}\,u\right)
\geq f(u)\,.
\]
It follows
\[
\limsup_k\,\langle f'(w_k),u_k-u\rangle
= \limsup_k\,2\left[f(u_k)-
f\left(\frac{1}{2}\,u_k+\frac{1}{2}\,u\right)\right]\leq 0
\]
whence, as before, $\|w_k-u\|\to 0$.
Since $(\tau_k)$ is bounded away from $0$, we conclude that
$\|u_k-u\|\to 0$.
\par
Finally, to prove~$(c)$ we may assume that
$(u_k)$ is weakly convergent to some $u$, whence
\[
\lim_k \,\langle f'(u_k),u_k-u\rangle = 0\,.
\]
Since $f'$ is of class~$(S)_+$ on $C$, assertion $(c)$ also follows.
\end{proof}
We end the section with a result relating the minimality in the 
$C^1$-topology and that in the  $W^{1,p}_0$-topology.
When $W=W^{1,p}_0(\Omega)$ and $\Psi(\xi)=\frac{1}{p}|\xi|^p$, 
the next theorem has been proved
in~\cite{garciaazorero_peralalonso_manfredi2000}, which extends to
the $p$-Laplacian the well-known result by
Brezis and Nirenberg~\cite{brezis_nirenberg1993} for the case $p=2$
(see also~\cite{guo_zhang2003} for $p>2$
and~\cite{kyritsi_papageorgiou2005} in a non-smooth setting).
\begin{thm}
\label{thm:locminW1p}
Assume that $\partial\Omega$ of class $C^{1,\alpha}$ and that
$u_0\in W^{1,p}_0(\Omega)\cap C^{1,\alpha}(\overline{\Omega})$
for some $\alpha\in]0,1]$.
Suppose also that $W^{1,p}_0(\Omega)=V\oplus W$, where $V$ is a 
finite dimensional subspace of $W^{1,p}_0(\Omega)$,
$W$ is closed in $W^{1,p}_0(\Omega)$ and the projection
$P_V:W^{1,p}_0(\Omega)\rightarrow V$, associated with
the direct sum decomposition, is continuous from the topology of
$L^1(\Omega)$ to that of $V$.
\par
If $u_0$ is a strict local minimum for the functional $f$ defined 
in~\eqref{eq:f} along $u_0+(W\cap C^1(\overline{\Omega}))$ for the 
$C^1(\overline{\Omega})$-topology, then $u_0$ is a strict local 
minimum of $f$ along $u_0+W$ for the $W^{1,p}_0(\Omega)$-topology.
\end{thm}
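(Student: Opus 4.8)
The plan is to argue by contradiction, adapting to the affine subspace $u_0+W$ and to the general principal part $\Psi$ the scheme used for the $p$-Laplacian in~\cite{garciaazorero_peralalonso_manfredi2000} (and for $p=2$ in~\cite{brezis_nirenberg1993}). Suppose that $u_0$ is a strict local minimum of $f$ along $u_0+(W\cap C^1(\overline\Omega))$ in the $C^1$-topology but \emph{not} along $u_0+W$ in the $\w$-topology. Then there are $w_n\in W$ with $w_n\neq 0$, $\|w_n\|_{\w}\to 0$ and $f(u_0+w_n)\le f(u_0)$. Setting $r_n=\|w_n\|_{\w}$, I minimize $f$ over the closed, bounded, convex set $\{u_0+w:\ w\in W,\ \|w\|_{\w}\le r_n\}$. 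Since $u\mapsto\io\Psi(\nabla u)\,dx$ is convex, hence sequentially weakly lower semicontinuous, while $u\mapsto\io G(x,u)\,dx$ is sequentially weakly continuous by the compact Sobolev embedding together with $(g_1)$ (alternatively one invokes Proposition~\ref{prop:S+}$(a)$), the functional $f$ is sequentially weakly lower semicontinuous; as $W$ is reflexive, a minimizer $u_n=u_0+\widehat w_n$ exists, and one may take $\widehat w_n\neq 0$ with $f(u_n)\le f(u_0)$ and $\|\widehat w_n\|_{\w}\le r_n\to 0$.

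By the Lagrange multiplier rule, $u_n$ satisfies $f'(u_n)+\mu_n\,J'(\widehat w_n)=\Lambda_n$ for some $\mu_n\ge 0$, where $J(w)=\frac1p\|\nabla w\|_p^p$, where $\mu_n=0$ if $\|\widehat w_n\|_{\w}<r_n$, and where $\Lambda_n$ annihilates $W$. Because $P_V:L^1(\Omega)\to V$ is continuous, its adjoint sends $V'$ into $(L^1(\Omega))'=L^\infty(\Omega)$, so the annihilator of $W$ is represented by $L^\infty(\Omega)$-functions; moreover, testing the identity with $\widehat w_n\in W$ gives $\mu_n r_n^{p}=-\langle f'(u_n),\widehat w_n\rangle$, whence $\mu_n r_n^{p-1}\le\|f'(u_n)\|_{\w'}$ is bounded (as $f'(u_n)\to f'(u_0)$), and consequently $\|\Lambda_n\|_\infty\le C$ uniformly in $n$. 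Testing now with truncations of $u_n$, corrected by their $P_V$-projection so as to lie in $W$ (the correction being controlled in $L^\infty$ precisely by the $L^1$-continuity of $P_V$), one casts the equation into the one-sided form of Theorem~\ref{thm:regLinfty}, with a forcing bounded uniformly in $n$. Theorem~\ref{thm:regLinfty} then yields $\|u_n\|_\infty\le C$ uniformly, so that $g(x,u_n)\in L^\infty(\Omega)$ is uniformly bounded too.

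In the \emph{interior} case $\mu_n=0$ the Euler--Lagrange equation reads $-\dvg[\nabla\Psi(\nabla u_n)]=g(x,u_n)+\Lambda_n$, with right-hand side uniformly bounded in $L^\infty(\Omega)$, so Theorem~\ref{thm:regC1} gives $\|u_n\|_{C^{1,\beta}}\le C$ uniformly. By Ascoli--Arzelà a subsequence converges in $C^1(\overline\Omega)$, and since $u_n\to u_0$ in $\w$ the limit is $u_0$; thus $u_n\to u_0$ in $C^1(\overline\Omega)$. As $\widehat w_n=u_n-u_0\in W\cap C^1(\overline\Omega)$, $\widehat w_n\neq 0$ and $f(u_n)\le f(u_0)$, this contradicts the strict $C^1$-minimality of $u_0$, and the theorem follows.

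The main obstacle is the \emph{constrained} case $\|\widehat w_n\|_{\w}=r_n$, in which the equation becomes $-\dvg\bigl[\nabla\Psi(\nabla u_n)+\mu_n|\nabla u_n-\nabla u_0|^{p-2}(\nabla u_n-\nabla u_0)\bigr]=g(x,u_n)+\Lambda_n$: this is no longer the operator $\nabla\Psi$ of Theorem~\ref{thm:regC1}, and since only $\mu_n r_n^{p-1}$ is controlled, $\mu_n$ itself may blow up when $p>2$, so one cannot simply treat the extra flux as a perturbation. The way around is to read the left-hand side as the $x$-dependent operator $A_n(x,\xi)=\nabla\Psi(\xi)+\mu_n|\xi-\nabla u_0(x)|^{p-2}(\xi-\nabla u_0(x))$: because $u_0\in C^{1,\alpha}(\overline\Omega)$, the coefficient $\nabla u_0(x)$ is Hölder continuous, and after dividing the equation by $(1+\mu_n)$ the normalized operator $A_n/(1+\mu_n)$ satisfies the structure hypotheses of Lieberman's regularity theory~\cite{lieberman1988} with ellipticity ratio and $p$-growth constants independent of $n$, while the normalized datum $(g(x,u_n)+\Lambda_n)/(1+\mu_n)$ stays uniformly bounded. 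Lieberman's theorem~\cite{lieberman1988}, which already underlies Theorem~\ref{thm:regC1}, then provides the uniform $C^{1,\beta}$ bound in this case as well (and the $x$-dependent version of~\cite{guedda_veron1989} gives the corresponding uniform $L^\infty$ bound of the previous paragraph, the extra flux only improving coercivity), after which the contradiction is reached exactly as before.
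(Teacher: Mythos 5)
Your overall scheme (contradiction, minimization on shrinking constrained sets, Lagrange multipliers, $L^\infty$ bound via Theorem~\ref{thm:regLinfty}, then uniform $C^{1,\beta}$ bounds and $C^1$-convergence to contradict $C^1$-minimality) is exactly the paper's, and your interior case $\mu_n=0$ is fine. The gap is in the constrained case, at the step you yourself flag as the main obstacle: the claim that, after dividing by $1+\mu_n$, the operator $A_n(x,\xi)=\nabla\Psi(\xi)+\mu_n|\xi-\nabla u_0(x)|^{p-2}(\xi-\nabla u_0(x))$ satisfies Lieberman's structure hypotheses with constants independent of $n$ is false. Those hypotheses require the modulus of ellipticity of $\partial_\xi A$ to be comparable, above and below, to a fixed weight of the form $(\kappa'+|\xi|)^{p-2}$, i.e.\ degeneracy/singularity only at $\xi=0$. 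Take $\kappa=0$, $p>2$ and a point $x$ with $\nabla u_0(x)\neq 0$: at $\xi=\nabla u_0(x)$ the second summand of $\partial_\xi A_n$ vanishes, so the smallest eigenvalue of $\partial_\xi\bigl[A_n/(1+\mu_n)\bigr]$ there is at most $C|\nabla u_0(x)|^{p-2}/(1+\mu_n)$, while the structure condition demands a lower bound $\nu\,|\nabla u_0(x)|^{p-2}$ with $\nu$ independent of $n$; since only $\mu_n r_n^{p-1}$ is controlled, $\mu_n$ may indeed blow up and no uniform $\nu$ exists. For $1<p<2$ the dual failure occurs: the largest eigenvalue blows up at $\xi=\nabla u_0(x)$. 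In other words, your normalized family degenerates along the moving set $\xi=\nabla u_0(x)$, which is simply not covered by~\cite{lieberman1988}, so the uniform $C^{1,\beta}$ bound -- the heart of the argument -- is unproved. (A secondary, repairable issue: weak lower semicontinuity of $u\mapsto\int_\Omega G(x,u)\,dx$ via compact embedding fails in the critical case $q=p^*-1$ allowed by $(g_1)$; one really needs the $(S)_+$ property of $f'$ on small sets, as in Proposition~\ref{prop:S+}.)

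The paper's proof is engineered precisely to avoid this. Instead of the norm ball, it constrains by sublevels of the Bregman-type functional $h(u)=\int_{\Omega}\bigl[\Psi(\nabla u)-\Psi(\nabla u_0)-\nabla\Psi(\nabla u_0)\cdot(\nabla u-\nabla u_0)\bigr]\,dx$, whose derivative $-\,\dvg\bigl[\nabla\Psi(\nabla u)-\nabla\Psi(\nabla u_0)\bigr]$ has the \emph{same} principal part as $f'$. Then the multiplier identity $f'(u_0+w_k)+\lambda_k h'(u_0+w_k)=0$ on $W$, once divided by $1+\lambda_k$, is again an equation for the unchanged operator $-\,\dvg[\nabla\Psi(\nabla\cdot)]$, with right-hand side $\frac{1}{1+\lambda_k}\,g(x,\cdot)+z_k-\dvg\bigl[\frac{\lambda_k}{1+\lambda_k}\,\nabla\Psi(\nabla u_0)\bigr]$; this is uniformly controlled because $\frac{\lambda_k}{1+\lambda_k}\leq 1$ and $\nabla\Psi(\nabla u_0)\in C^{0,\beta}(\overline\Omega;\R^N)$ by Proposition~\ref{prop:Psireg}, no matter how large $\lambda_k$ is. Hence Theorems~\ref{thm:regLinfty} and~\ref{thm:regC1} apply with constants independent of $k$, and the contradiction is reached as you intend (one must also check, as the paper does, that $h(v_k)\to 0$ is equivalent to $v_k\to u_0$ in $W^{1,p}_0(\Omega)$, so that the constrained sets shrink correctly). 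To salvage your version you would have to prove a new uniform $C^{1,\beta}$ theorem for the mixed operators $A_n$, which is exactly what the choice of constraint is designed to make unnecessary.
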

\begin{proof}
Define a convex and coercive functional 
$h:W^{1,p}_0(\Omega)\rightarrow\R$ by
\[
h(u) = \int_{\Omega}\bigl[
\Psi(\nabla u) - \Psi(\nabla u_0)
- \nabla\Psi(\nabla u_0)\cdot
(\nabla u-\nabla u_0)\bigr]\,dx
\]
and observe that $v_k \to u_0$ in $W^{1,p}_0(\Omega)$
if and only if $h(v_k) \to 0$.
Actually, if $v_k \to u_0$ in $W^{1,p}_0(\Omega)$, it is
clear that $h(v_k) \to 0$.
Conversely, assume that $h(v_k) \to 0$.
Since
\[
\Psi(\nabla v_k) - \Psi(\nabla u_0)
- \nabla\Psi(\nabla u_0)\cdot
(\nabla v_k-\nabla u_0) \to 0
\qquad\text{in $L^1(\Omega)$}\,,
\]
up to a subsequence we have
\[
\Psi(\nabla v_k) - \Psi(\nabla u_0)
- \nabla\Psi(\nabla u_0)\cdot
(\nabla v_k-\nabla u_0) \to 0
\qquad\text{a.e. in $\Omega$}\,,
\]
hence $\nabla v_k\to \nabla u_0$ a.e. in $\Omega$ by
the strict convexity of $\Psi$.
On the other hand,
\begin{multline*}
\Psi(\nabla v_k) - \Psi(\nabla u_0)
- \nabla\Psi(\nabla u_0)\cdot
(\nabla v_k-\nabla u_0) \\
\geq \dfrac{\nu}{p}\,|\nabla v_k|^p
- \nabla\Psi(\nabla u_0)\cdot\nabla v_k - z
\qquad\text{a.e. in $\Omega$}
\end{multline*}
for some $z\in L^1(\Omega)$.
Therefore, $(\nabla v_k)$ is convergent to $\nabla u_0$
also weakly in $L^p(\Omega)$.
If we apply Fatou's Lemma to the sequence
\[
\left[\Psi(\nabla v_k) - \Psi(\nabla u_0)
- \nabla\Psi(\nabla u_0)\cdot
(\nabla v_k-\nabla u_0)\right] -
\dfrac{\nu}{p}\,|\nabla v_k|^p
+ \nabla\Psi(\nabla u_0)\cdot\nabla v_k + z \geq 0\,,
\]
we find that
\[
\limsup_k \int_{\Omega}|\nabla v_k|^p\,dx
\leq \int_{\Omega}|\nabla u_0|^p\,dx\,,
\]
whence the convergence of $(v_k)$ to $u_0$
in $W^{1,p}_0(\Omega)$.
\par
Since $h$ is of class $C^1$ with
\[
\langle h'(u),u-u_0\rangle = \int_{\Omega}\bigl(
\nabla\Psi(\nabla u) - \nabla\Psi(\nabla u_0)\bigr)
\cdot(\nabla u -\nabla u_0)\,dx>0
\qquad\text{for any $u\neq u_0$}\,,
\]
for every $r>0$ the set
\[
\left\{w\in W:\,\,h(u_0+w) = r\right\}
\]
is a $C^1$-hypersurface in $W$.
Moreover, if $r$ is small enough, the map $f'$ is of 
class~$(S)_+$ on 
\[
\left\{u\in W^{1,p}_0(\Omega):\,\,h(u_0+u)\leq r\right\}\,.
\]
If $\Psi=\Psi_{p,0}$ with $1<p<N$, this is proved
in~\cite[Theorem~1.2]{cingolani_degiovanni2009},
while the general case follows 
from~\cite[Theorem~3.4]{almi_degiovanni2013}.
From Proposition~\ref{prop:S+} we infer that
$\{u\mapsto f(u_0+u)\}$ is weakly lower
semicontinuous on
\[
\left\{u\in W^{1,p}_0(\Omega):\,\,h(u_0+u)\leq r\right\}\,,
\]
hence on
\[
\left\{w\in W:\,\,h(u_0+w)\leq r\right\}\,,
\]
which is weakly compact.
\par
If we argue by contradiction, we find a sequence
$(w_k)$ in $W$ such that $w_k$ is a minimum of
$\{w\mapsto f(u_0+w)\}$ on
\[
\left\{w\in W:\,\,h(u_0+w)\leq r_k\right\}
\]
with $r_k\to 0$ and $w_k\neq 0$, 
in particular $f(u_0+w_k) \leq f(u_0)$.
Therefore, there exists $\lambda_k\geq 0$ such that
\[
\langle f'(u_0+w_k) + \lambda_k h'(u_0+w_k),u\rangle=0
\qquad\text{for any $u\in W$}\,,
\]
namely
\begin{multline*}
\int_{\Omega}
\nabla\Psi(\nabla (u_0+w_k))\cdot\nabla u\,dx
- \int_{\Omega} g(x,u_0+w_k)u\,dx \\
+ \lambda_k
\int_{\Omega}\bigl(
\nabla\Psi(\nabla (u_0+w_k)) - \nabla\Psi(\nabla u_0)\bigr)
\cdot\nabla u\,dx = 0
\qquad\text{for any $u\in W$}\,,
\end{multline*}
which is equivalent to
\begin{multline*}
\int_{\Omega}
\nabla\Psi(\nabla (u_0+w_k))\cdot\nabla u\,dx
- \dfrac{1}{1+\lambda_k}\,\int_{\Omega} g(x,u_0+w_k)u\,dx \\
= \dfrac{\lambda_k}{1+\lambda_k}\,
\int_{\Omega}\nabla\Psi(\nabla u_0)
\cdot\nabla u\,dx
\qquad\text{for any $u\in W$}\,.
\end{multline*}
It follows
\begin{multline*}
\int_{\Omega}
\nabla\Psi(\nabla (u_0+w_k))\cdot\nabla u\,dx
- \dfrac{1}{1+\lambda_k}\,\int_{\Omega} g(x,u_0+w_k)u\,dx
= \dfrac{\lambda_k}{1+\lambda_k}\,
\int_{\Omega}\nabla\Psi(\nabla u_0)
\cdot\nabla u\,dx \\
+ \int_{\Omega}\nabla\Psi(\nabla (u_0+w_k))\cdot\nabla P_Vu\,dx
- \dfrac{1}{1+\lambda_k}\,\int_{\Omega} g(x,u_0+w_k)P_Vu\,dx \\
- \dfrac{\lambda_k}{1+\lambda_k}\,
\int_{\Omega}\nabla\Psi(\nabla u_0)
\cdot\nabla P_Vu\,dx
\qquad\text{for any $u\in W^{1,p}_0(\Omega)$}\,.
\end{multline*}
Since $P_V$ is continuous from the topology of
$L^1(\Omega)$ to that of $W^{1,p}_0(\Omega)$, we have
\begin{multline*}
\int_{\Omega}\nabla\Psi(\nabla (u_0+w_k))\cdot\nabla P_Vu\,dx
- \dfrac{1}{1+\lambda_k}\,\int_{\Omega} g(x,u_0+w_k)P_Vu\,dx \\
- \dfrac{\lambda_k}{1+\lambda_k}\,
\int_{\Omega}\nabla\Psi(\nabla u_0)
\cdot\nabla P_Vu\,dx
= \int_{\Omega} z_k u\,dx
\qquad\text{for any $u\in W^{1,p}_0(\Omega)$}
\end{multline*}
with $(z_k)$ bounded in $L^{\infty}(\Omega)$.
It follows
\begin{multline}
\label{eq:zk}
- \, \dvg\left[
\nabla\Psi(\nabla (u_0+w_k))\right]
- \dfrac{1}{1+\lambda_k}\,g(x,u_0+w_k)
= z_k -
\dvg\left[\dfrac{\lambda_k}{1+\lambda_k}\,
\nabla\Psi(\nabla u_0)\right]
\end{multline}
and $\nabla\Psi(\nabla u_0)\in C^{0,\beta}(\overline\Omega;\R^N)$
for some $\beta\in]0,1]$, by Proposition~\ref{prop:Psireg}.
\par
If $p<N$, from~$(g_1)$ we infer that
\[
\begin{split}
\dfrac{1}{1+\lambda_k}\, 
\left[g(x,u_0+w_k) u - g(x,0)\right] u
&\leq
|g(x,u_0+w_k)-g(x,0)|\,|u| \\
&=
\frac{|g(x,u_0+w_k)-g(x,0)|}{|u_0+w_k|}
\,(u_0+w_k) u \\
&\leq
\frac{C(1+|u_0+w_k|^{p^*-1})}{|u_0+w_k|}
\,(u_0+w_k)u \\
&=
C\frac{u_0+w_k}{|u_0+w_k|}\,u 
+ C|u_0+w_k|^{p^*-2}(u_0+w_k)u \,,
\end{split}
\]
whenever $u(u_0+w_k)\geq 0$ a.e. in $\Omega$.
It follows
\begin{multline*}
\int_{\Omega}\nabla\Psi(\nabla (u_0+w_k))\cdot\nabla u\,dx
- \int_{\Omega} C|u_0+w_k|^{p^*-2}(u_0+w_k)u\,dx \\
\leq \int_{\Omega} \left[
\dfrac{1}{1+\lambda_k}\, g(x,0)
+ \hat{z}_k+z_k\right] u\,dx
+ \dfrac{\lambda_k}{1+\lambda_k}\,
\int_{\Omega}\nabla\Psi(\nabla u_0)\cdot\nabla u\,dx \\
\qquad\text{for any $u\in W^{1,p}_0(\Omega)$
with $u(u_0+w_k)\geq 0$ a.e. in $\Omega$}\,,
\end{multline*}
where
\[
\hat{z}_k=\left\{
\begin{array}{ll}
\displaystyle{
C\frac{u_0+w_k}{|u_0+w_k|}} &\text{where $u_0+w_k\neq 0$}\,,\\
0 &\text{where $u_0+w_k=0$}\,.
\end{array}
\right. 
\]
From Theorem~\ref{thm:regLinfty} it follows that
$(u_0+w_k)$ is bounded in $L^\infty(\Omega)$.
Coming back to~\eqref{eq:zk},
from Theorem~\ref{thm:regC1}
we conclude that $(u_0+w_k)$ is bounded
in $C^{1,\beta}(\overline\Omega)$ for some
$\beta\in ]0,1]$.
Then $(u_0+w_k)$ is convergent to $u_0$ in
$C^1(\overline\Omega)$ and a contradiction follows.
\par
If $p\geq N$, the argument is similar and even simpler.
\end{proof}
%


\section{Parametric minimization}
\label{sect:parmin}
Throughout this section, we assume that $\Omega$ is a bounded open 
subset of $\R^N$ with $\partial\Omega$ of class $C^{1,\alpha}$
for some $\alpha\in]0,1]$ and that $\Psi$ and $g$ 
satisfy assumptions~$(\Psi_1)$, $(\Psi_2)$, $(g_1)$ and~$(g_2)$
with either 
$\kappa>0$ and $1<p<\infty$ or $\kappa=0$ and $1<p<2$.
\par
Let $u_0$ denote a critical point of the functional $f$ defined 
in~\eqref{eq:f}.
According to Theorems~\ref{thm:regLinfty} and~\ref{thm:regC1},
we have $u_0 \in C^{1,\beta}(\overline\Omega)$
for some $\beta\in]0,1]$.
\par
Given a continuous function $\Phi:\R^N\rightarrow\R$,
for any $x,v \in \R^N$ we set
\[
\underline{\Phi}''(x)[v]^2 =
\liminf_{\substack{y\to x\\ t\to 0\\ w\to v}}\,
\dfrac{\Phi(y+t w)+\Phi(y-t w)-2\Phi(y)}{t^2}\,.
\]
Then the function
$\left\{(x,v)\mapsto \underline{\Phi}''(x)[v]^2\right\}$
is lower semicontinuous.
If $\Phi$ is convex, it is also clear that
$\underline{\Phi}''(x)[v]^2\in[0,+\infty]$
and that $\underline{\Phi}''(x)[0]^2=0$.
In particular, it is easily seen that
\[
\text{$\kappa=0$ and $1<p<2$}\,\Longrightarrow\,
\underline{\Psi}''_{\,p,\kappa}(0)[\xi]^2 = \left\{
\begin{array}{ll}
0 &\qquad\text{if $\xi = 0$}\,,\\
\noalign{\medskip}
+\infty &\qquad\text{if $\xi\neq 0$}\,.
\end{array}
\right.
\]
Since $\left(\Psi-\nu\,\Psi_{p,\kappa}\right)$ is convex,
we also have
\[
\text{$\kappa=0$ and $1<p<2$}\,\Longrightarrow\,
\underline{\Psi}''(0)[\xi]^2 = \left\{
\begin{array}{ll}
0 &\qquad\text{if $\xi = 0$}\,,\\
\noalign{\medskip}
+\infty &\qquad\text{if $\xi\neq 0$}\,,
\end{array}
\right.
\]
while $\underline{\Psi}''(\eta)[\xi]^2=
\Psi''(\eta)[\xi]^2$ in the other cases.
In particular, the function
$\left\{\xi\mapsto \underline{\Psi}''(\eta)[\xi]^2\right\}$
is convex for any $\eta\in\R^N$:
\begin{prop}
\label{prop:PsiTaylor}
For every $u,v\in W^{1,p}_0(\Omega)$, the function
\[
\biggl\{(x,t) \mapsto (1-t)
\underline{\Psi}''
\bigl(\nabla u(x)+t(\nabla v(x)-\nabla u(x))\bigr)
\bigl[\nabla v(x)-\nabla u(x)\bigr]^2 \biggr\}
\]
belongs to $L^1(\Omega\times]0,1[)$ and one has
\begin{multline*}
\int_\Omega \Psi(\nabla v)\,dx -
\int_\Omega \Psi(\nabla u)\,dx -
\int_\Omega \nabla\Psi(\nabla u) \cdot(\nabla v-\nabla u)\,dx \\
= \int_0^1(1-t)\left\{\int_\Omega
\underline{\Psi}''
\bigl(\nabla u(x)+t(\nabla v(x)-\nabla u(x))\bigr)
\bigl[\nabla v(x)-\nabla u(x)\bigr]^2\, dx\right\}\,dt  \,.
\end{multline*}
\end{prop}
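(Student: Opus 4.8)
The plan is to prove a Taylor formula with integral remainder for the (possibly non-smooth) convex-type integrand $\Psi$, where the usual second derivative $\Psi''$ is replaced by the lower-generalized second derivative $\underline{\Psi}''$. The natural strategy is a two-step argument: first establish the one-dimensional pointwise identity for a.e. $x$, then integrate over $\Omega$ after justifying the interchange of integration and the measurability/integrability of the integrand. The key structural fact, already recorded in the excerpt, is that $\Psi$ differs from $\nu\,\Psi_{p,\kappa}$ by a convex function and that $C\,\Psi_{p,\kappa}-\Psi$ is convex; this two-sided convexity control is what makes $\underline{\Psi}''$ behave like a genuine second-order term.

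**First I would** fix $u,v\in\w$ and, for a.e.\ $x\in\Omega$, consider the scalar function
\[
\varphi_x(t) = \Psi\bigl(\nabla u(x)+t(\nabla v(x)-\nabla u(x))\bigr)\,,
\qquad t\in[0,1]\,.
\]
Away from the singular set (that is, when $\kappa>0$, or when $\kappa=0$, $1<p<2$ but the segment joining $\nabla u(x)$ and $\nabla v(x)$ avoids the origin), $\Psi$ is $C^2$ by $(\Psi_2)$, so $\varphi_x$ is $C^2$ and the classical second-order Taylor formula with integral remainder gives
\[
\varphi_x(1)-\varphi_x(0)-\varphi_x'(0)
= \int_0^1 (1-t)\,\varphi_x''(t)\,dt\,,
\]
which is exactly the pointwise version of the claimed identity once we observe $\varphi_x''(t)=\underline{\Psi}''(\cdots)[\nabla v-\nabla u]^2$ there. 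The delicate case is $\kappa=0$, $1<p<2$, where the segment passes through $0$: at such a point $\underline{\Psi}''$ takes the value $+\infty$ in the direction $\nabla v-\nabla u\neq 0$, so one must check that the integral remainder still reproduces $\varphi_x(1)-\varphi_x(0)-\varphi_x'(0)$, treating $\varphi_x$ as a convex-plus-smooth function whose second distributional derivative is a nonnegative measure that agrees with the $L^1$ density $\underline{\Psi}''$ off a negligible set of $t$'s. \textbf{This is the main obstacle:} reconciling the $+\infty$ value of $\underline{\Psi}''$ at the origin with the finiteness of the left-hand side, and confirming that the at-most-one bad parameter $t$ along each segment contributes nothing.

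**Next I would** address the global integrability and the Fubini interchange. Using the two-sided bound coming from $(\Psi_1)$, namely $\underline{\Psi}''(\eta)[\xi]^2 \le C\,\underline{\Psi}''_{\,p,\kappa}(\eta)[\xi]^2$ off the singular set, together with the explicit estimate $\Psi''_{p,\kappa}(\eta)[\xi]^2 \le (\kappa+|\eta|)^{p-2}|\xi|^2$ from the proof of Theorem~\ref{thm:regC1}, I would dominate the integrand of the remainder by a quantity like $C\,(\kappa+|\nabla u|+|\nabla v|)^{p-2}\,|\nabla v-\nabla u|^2$. When $p\ge 2$ this is controlled by $|\nabla u|^p+|\nabla v|^p\in L^1$; when $1<p<2$ it is controlled via Hölder by $\|\nabla v-\nabla u\|_p^2$ times an $L^{p/(2-p)}$ factor built from the gradients, again finite. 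The lower semicontinuity and convexity properties of $\xi\mapsto\underline{\Psi}''(\eta)[\xi]^2$ noted just before the statement guarantee joint measurability of $(x,t)\mapsto\underline{\Psi}''(\cdots)[\cdots]^2$, so Tonelli applies to the nonnegative integrand and Fubini then legitimizes swapping $\int_\Omega$ and $\int_0^1$. Integrating the pointwise identity over $\Omega$ and applying this interchange yields the asserted formula, completing the proof.
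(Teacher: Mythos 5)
Your overall strategy (a one-dimensional Taylor identity along each segment, then Tonelli--Fubini over $\Omega\times]0,1[$) is the same as the paper's, but there is a genuine gap exactly at the point you yourself flag as ``the main obstacle'': in the case $\kappa=0$, $1<p<2$, with the segment $t\mapsto\nabla u(x)+t(\nabla v(x)-\nabla u(x))$ crossing the origin, you never actually prove that the crossing time contributes nothing. This is not a formality. The function $\varphi_x$ is convex up to a smooth perturbation, so its second distributional derivative is a measure that could, a priori, carry a singular part (an atom at the crossing time $t_0$); if it did, the Taylor formula written with only the absolutely continuous density $\underline{\Psi}''$ would be \emph{false} --- think of $t\mapsto|t|$, whose second derivative is a Dirac mass while the pointwise second derivative vanishes off $0$. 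What rules this out, and what the paper uses, is the quantitative bound
\[
\underline{\Psi}''(\eta)[\xi]^2=\Psi''(\eta)[\xi]^2
\leq \frac{C}{|\eta|^{2-p}}\,|\xi|^2
\qquad (\eta\neq 0)\,,
\]
which along the segment gives a blow-up of order $|t-t_0|^{p-2}$; since $p-2>-1$, this is locally integrable in $t$, so $\varphi_x'$ (which is continuous, $\Psi$ being $C^1$) is absolutely continuous, $\varphi_x\in W^{2,1}_{loc}(\R)$, the singular part is zero, and the one-dimensional Taylor formula with integral remainder holds. Your proposal asserts that the second distributional derivative ``agrees with the $L^1$ density off a negligible set'' --- true but beside the point, since that statement is compatible with a nonzero atom --- and the decisive integrability-in-$t$ argument ($p>1$ makes the exponent $p-2$ harmless) appears nowhere.

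A second, less serious, flaw: your proposed dominating function $C\,(\kappa+|\nabla u|+|\nabla v|)^{p-2}\,|\nabla v-\nabla u|^2$ does not dominate when $1<p<2$, because $s\mapsto s^{p-2}$ is then \emph{decreasing}: near the crossing time the integrand blows up like $|t-t_0|^{p-2}$, whereas your candidate dominant is finite there (take $\nabla u(x)=-\nabla v(x)\neq 0$ to see this). This error happens to be harmless for the structure of your argument, since Tonelli --- which you also invoke --- requires only nonnegativity and joint measurability, no domination; membership in $L^1(\Omega\times]0,1[)$ then follows from the pointwise identity together with the finiteness of $\int_\Omega\bigl[\Psi(\nabla v)-\Psi(\nabla u)-\nabla\Psi(\nabla u)\cdot(\nabla v-\nabla u)\bigr]dx$. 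But as written, the domination step should be deleted rather than repaired, and the pointwise $W^{2,1}_{loc}$ argument above must be supplied for the proof to be complete.
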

\begin{proof}
Let us treat the case $\kappa=0$ and $1<p<2$.
The case $\kappa>0$ and $1<p<\infty$ is similar and even simpler.
First of all,
$\left\{(\eta,\xi)\mapsto \underline{\Psi}''(\eta)[\xi]^2\right\}$
is a Borel function, being lower semicontinuous.
Moreover, we have
\[
\underline{\Psi}''(\eta)[\xi]^2=\Psi''(\eta)[\xi]^2 \leq
\frac{C}{|\eta|^{2-p}}\,|\xi|^2
\qquad\text{for any $\eta, \xi\in\R^N$ with $\eta\neq 0$}\,.
\]
Therefore, for every $\eta,\xi\in\R^N$, the function
$\left\{t\mapsto \Psi(\eta+t(\xi-\eta))\right\}$ belongs to
$W^{2,1}_{loc}(\R)$ and we have
\[
\Psi(\xi) - \Psi(\eta) - \nabla\Psi(\eta)\cdot(\xi-\eta) =
\int_0^1 (1-t)\underline{\Psi}''
\bigl(\eta+t(\xi-\eta)\bigr)
\bigl[\xi-\eta\bigr]^2\,dt\,.
\]
Then, given $u,v\in W^{1,p}_0(\Omega)$, we have a.e. in
$\Omega$
\begin{multline*}
\Psi(\nabla v(x)) - \Psi(\nabla u(x)) - \nabla\Psi(\nabla u(x))
\cdot(\nabla v(x)-\nabla u(x))  \\
=  \int_0^1 (1-t)\underline{\Psi}''
\bigl(\nabla u(x)+t(\nabla v(x)-\nabla u(x))\bigr)
\bigl[\nabla v(x)-\nabla u(x)\bigr]^2\,dt\,.
\end{multline*}
By integrating over $\Omega$ and applying Fubini's theorem, the
assertion follows.
\end{proof}
\begin{thm}
\label{thm:Qlsc}
Let $(u_k)$ be a sequence in
$W^{1,p}_0(\Omega)\cap L^{\infty}(\Omega)$
and $(v_k)$ a sequence in $W^{1,2}_0(\Omega)$
such that $(u_k)$ is bounded in $L^{\infty}(\Omega)$
and convergent to $u$ in $W^{1,p}_0(\Omega)$, while
$(v_k)$ is weakly convergent to $v$ in $W^{1,2}_0(\Omega)$.
\par
Then we have
\begin{multline*}
\int_{\Omega}
\underline{\Psi}''(\nabla u)[\nabla v]^2\,dx
- \int_{\Omega} D_sg(x,u)v^2\,dx \\
\leq \liminf_k \left(
\int_{\Omega}
\underline{\Psi}''(\nabla u_k)[\nabla v_k]^2\,dx
- \int_{\Omega} D_sg(x,u_k)v_k^2\,dx\right)\,.
\end{multline*}
\end{thm}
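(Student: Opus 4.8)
The plan is to separate the leading term from the lower order term, since only the former is genuinely lower semicontinuous. Write
\[
A_k=\int_\Omega \underline{\Psi}''(\nabla u_k)[\nabla v_k]^2\,dx\in[0,+\infty],
\qquad
B_k=\int_\Omega D_sg(x,u_k)v_k^2\,dx .
\]
First I would show that $B_k\to B:=\int_\Omega D_sg(x,u)v^2\,dx$ along the full sequence, so that $\liminf_k(A_k-B_k)=\liminf_k A_k-B$ and the claim reduces to $\int_\Omega \underline{\Psi}''(\nabla u)[\nabla v]^2\,dx\leq\liminf_k A_k$. Since $(u_k)$ is bounded in $L^\infty(\Omega)$ by some $S$, assumption $(g_2)$ gives the uniform bound $|D_sg(x,u_k)|\leq\widehat{C}_S$; moreover $u_k\to u$ in $W^{1,p}_0(\Omega)$ forces $u_k\to u$ in measure, so by continuity of $g(x,\cdot)$ and the uniform bound $D_sg(\cdot,u_k)\to D_sg(\cdot,u)$ in every $L^r(\Omega)$, $r<\infty$, and in the weak-$*$ topology of $L^\infty(\Omega)$. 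On the other hand the compact embedding $W^{1,2}_0(\Omega)\hookrightarrow L^2(\Omega)$ yields $v_k\to v$ in $L^2(\Omega)$, hence $v_k^2\to v^2$ in $L^1(\Omega)$. Splitting $D_sg(x,u_k)v_k^2-D_sg(x,u)v^2$ as $D_sg(x,u_k)(v_k^2-v^2)+(D_sg(x,u_k)-D_sg(x,u))v^2$, the first piece is bounded by $\widehat{C}_S\|v_k^2-v^2\|_1\to0$ and the second vanishes by the weak-$*$ convergence tested against $v^2\in L^1(\Omega)$.

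It remains to prove the lower semicontinuity of the leading term. Here I would rely on the structural facts already recorded before the statement: $(\eta,\xi)\mapsto\underline{\Psi}''(\eta)[\xi]^2$ is nonnegative, jointly lower semicontinuous, and convex in $\xi$ for each fixed $\eta$. To realize the $\liminf$ I pass to a subsequence along which $A_k$ converges to $\liminf_k A_k$ and, using $\nabla u_k\to\nabla u$ in $L^p(\Omega)$, along which in addition $\nabla u_k\to\nabla u$ a.e.; note also that on the bounded set $\Omega$ the weak convergence of $(v_k)$ in $W^{1,2}_0(\Omega)$ gives $\nabla v_k\rightharpoonup\nabla v$ weakly in $L^1(\Omega;\R^N)$. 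The required inequality is then exactly the conclusion of the classical lower semicontinuity theorem for integral functionals with a nonnegative integrand that is convex in the gradient variable $\xi$ and depends lower semicontinuously on a parameter $\eta$ converging in measure (of De Giorgi--Ioffe--Olech type).

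If a self-contained argument is preferred, the same conclusion follows by representing the convex lower semicontinuous integrand through affine minorants: one produces a countable family of bounded Carath\'eodory functions with $\phi_i(\eta)+\psi_i(\eta)\cdot\xi\leq\underline{\Psi}''(\eta)[\xi]^2$ and $\underline{\Psi}''(\eta)[\xi]^2=\sup_i\{\phi_i(\eta)+\psi_i(\eta)\cdot\xi\}$. For each $i$ one has $\phi_i(\nabla u_k)\to\phi_i(\nabla u)$ and $\psi_i(\nabla u_k)\to\psi_i(\nabla u)$ in $L^2(\Omega)$ by dominated convergence, so the mixed terms pass to the limit through the strong$\,\times\,$weak pairing; choosing one index on each piece of a finite measurable partition of $\Omega$, summing, passing to the $\liminf$, and then optimizing over partitions and indices recovers $\int_\Omega\underline{\Psi}''(\nabla u)[\nabla v]^2\,dx$ on the left.

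The step I expect to be the main obstacle is the degenerate case $\kappa=0$, $1<p<2$, where $\underline{\Psi}''(\eta)[\xi]^2=+\infty$ for $\eta=0$ and $\xi\neq0$. There the only convergence available for the parameter $\nabla u_k$ is strong in $L^p(\Omega)$, i.e. in measure, and one must ensure that the blow-up of the integrand on $\{\nabla u=0\}$ is respected in the limit, so that the left-hand side is correctly $+\infty$ whenever $\nabla v$ fails to vanish there. This is precisely what the extended-real-valued lower semicontinuity theorem, or equivalently the affine-minorant representation which allows the value $+\infty$, is designed to handle; in the non-degenerate case $\kappa>0$ the integrand $\underline{\Psi}''(\eta)[\xi]^2=\Psi''(\eta)[\xi]^2$ is continuous and the argument is routine.
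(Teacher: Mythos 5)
Your proposal is correct and follows essentially the same route as the paper: the paper disposes of the lower order term via the strong $L^2(\Omega)$ convergence of $(v_k)$ (exactly the splitting you detail, using $(g_2)$ and the $L^\infty$ bound on $(u_k)$) and then invokes the lower semicontinuity theorem of Ioffe \cite{ioffe1977-b} for the leading term, which is precisely the De Giorgi--Ioffe--Olech result you appeal to, applied with $\nabla u_k\to\nabla u$ in measure and $\nabla v_k\rightharpoonup\nabla v$ weakly in $L^1(\Omega;\R^N)$, and which indeed accommodates the extended-real-valued integrand arising when $\kappa=0$ and $1<p<2$.
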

\begin{proof}
Since $(v_k)$ is convergent to $v$ in $L^2(\Omega)$, we clearly have
\[
\int_{\Omega} D_sg(x,u)v^2\,dx =
\lim_k \int_{\Omega} D_sg(x,u_k)v_k^2\,dx\,.
\]
Then the assertion follows from the Theorem in~\cite{ioffe1977-b}.
\end{proof}
\begin{prop}
\label{prop:decomposition}
There exists a direct sum decomposition
\[
L^1(\Omega) = V \oplus \widetilde{W}
\]
such that:
\begin{itemize}
\item[$(a)$]
$V\subseteq X_{u_0}\cap W^{1,p}_0(\Omega)\cap L^{\infty}(\Omega)$
with $\dim V = m^*(f,u_0)<+\infty$, while
$\widetilde{W}$ is closed in~$L^1(\Omega)$;
\item[$(b)$]
we have
\begin{align*}
&\int_{\Omega}
\underline{\Psi}''(\nabla u_0)[\nabla(v+w)]^2\,dx
- \int_{\Omega} D_sg(x,u_0)(v+w)^2\,dx \\
&\hskip50pt =\int_{\Omega}
\underline{\Psi}''(\nabla u_0)[\nabla v]^2\,dx
- \int_{\Omega} D_sg(x,u_0)v^2\,dx \\
&\hskip100pt + \int_{\Omega}
\underline{\Psi}''(\nabla u_0)[\nabla w]^2\,dx
- \int_{\Omega} D_sg(x,u_0)w^2\,dx \\
&\hskip175pt
\text{for any $v\in V$ and
$w\in \widetilde{W}\cap W^{1,2}_0(\Omega)$}\,,\\
&\int_{\Omega}
\underline{\Psi}''(\nabla u_0)[\nabla v]^2\,dx
- \int_{\Omega} D_sg(x,u_0)v^2\,dx \leq 0 \\
&\hskip175pt
\text{for any $v\in V$}\,,\\
&\int_{\Omega}
\underline{\Psi}''(\nabla u_0)[\nabla w]^2\,dx
- \int_{\Omega} D_sg(x,u_0)w^2\,dx > 0 \\
&\hskip175pt
\text{for any 
$w\in (\widetilde{W}\cap W^{1,2}_0(\Omega))\setminus\{0\}$}\,.
\end{align*}
\end{itemize}
\end{prop}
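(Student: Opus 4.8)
The plan is to diagonalize the quadratic form $Q_{u_0}$ on the Hilbert space $X_{u_0}$ by spectral theory and then to transport the resulting finite-dimensional splitting to $L^1(\Omega)$ by means of a projection whose components are bounded functionals. Since every $v\in X_{u_0}$ satisfies $\nabla v=0$ a.e. on $Z_{u_0}$, on $X_{u_0}$ the form in the statement coincides with
\[
Q_{u_0}(v)=(v\,|\,v)_{u_0}-B(v,v)\,,\qquad
B(v,w)=\int_\Omega D_sg(x,u_0)\,vw\,dx\,,
\]
whereas for $w\in W^{1,2}_0(\Omega)\setminus X_{u_0}$ (which can occur only if $\kappa=0$ and $1<p<2$) the lower bounds on $\underline{\Psi}''(\nabla u_0)$ force $\int_\Omega\underline{\Psi}''(\nabla u_0)[\nabla w]^2\,dx=+\infty$, so that the form equals $+\infty$ there. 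As $X_{u_0}\hookrightarrow W^{1,2}_0(\Omega)$ continuously and $W^{1,2}_0(\Omega)\hookrightarrow L^2(\Omega)$ compactly, the embedding $X_{u_0}\hookrightarrow L^2(\Omega)$ is compact; together with $D_sg(x,u_0)\in L^\infty(\Omega)$ this shows that the operator $K$ defined by $B(v,w)=(Kv\,|\,w)_{u_0}$ is compact and self-adjoint. By the spectral theorem I fix an orthonormal basis of eigenvectors and let $V$ be the span of those with eigenvalue $\geq 1$ and $V^+$ the orthogonal complement. Since $Q_{u_0}(e)=(1-\lambda)(e\,|\,e)_{u_0}$ for an eigenvector $e$ of eigenvalue $\lambda$, and since the eigenvalues of $K$ accumulate only at $0$, the space $V$ is finite dimensional, $Q_{u_0}\leq 0$ on $V$ and $Q_{u_0}$ is coercive, hence positive definite, on $V^+$; a standard dimension count then gives $\dim V=m^*(f,u_0)$.

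Next I would check that $V\subseteq X_{u_0}\cap W^{1,p}_0(\Omega)\cap L^\infty(\Omega)$. Every eigenvector $e$ solves, in the weak sense, the linear equation $-\dvg\bigl[\lambda\,\Psi''(\nabla u_0)\nabla e\bigr]=D_sg(x,u_0)\,e$, whose coefficient matrix is bounded and whose right-hand side lies in $L^\infty(\Omega)$; hence $e\in L^\infty(\Omega)$ by the appropriate linear regularity theory. When $\kappa>0$ the operator is uniformly elliptic and the same theory gives $e\in C^{1,\beta}(\overline\Omega)\subseteq W^{1,p}_0(\Omega)$, while when $\kappa=0$ and $1<p<2$ the inclusion $V\subseteq W^{1,p}_0(\Omega)$ is automatic because $W^{1,2}_0(\Omega)\subseteq W^{1,p}_0(\Omega)$.

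Finally I would extend the splitting to $L^1(\Omega)$. Choosing an orthonormal basis $e_1,\dots,e_m$ of $V$ with eigenvalues $\lambda_1,\dots,\lambda_m\geq 1$, I set
\[
\ell_j(u)=\int_\Omega D_sg(x,u_0)\,e_j\,u\,dx\,,
\]
which belongs to $\bigl(L^1(\Omega)\bigr)'$ since $D_sg(x,u_0)e_j\in L^\infty(\Omega)$. The two identities $B(e_j,w)=\lambda_j(e_j\,|\,w)_{u_0}$ and $Q_{u_0}(e_j,w)=(1-\lambda_j)(e_j\,|\,w)_{u_0}$ for the polar form, valid for every $w\in X_{u_0}$, are the core of the argument: because $\lambda_j\geq 1>0$, the condition $\ell_j(w)=0$ is equivalent to $(e_j\,|\,w)_{u_0}=0$, whence $\{w\in X_{u_0}:\ell_j(w)=0,\ 1\le j\le m\}=V^+$ and the cross terms $Q_{u_0}(e_j,w)$ vanish there. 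Moreover $\ell_i(e_j)=\lambda_i\delta_{ij}$, so $P_Vu=\sum_j\lambda_j^{-1}\ell_j(u)\,e_j$ is a bounded projection of $L^1(\Omega)$ onto $V$ and $\widetilde W=\ker P_V=\bigcap_j\ker\ell_j$ is closed; this gives the decomposition and~(a). For~(b) one distinguishes, for $w\in\widetilde W\cap W^{1,2}_0(\Omega)$, the two cases: if $w\in X_{u_0}$ then $w\in V^+$, and the identities above yield the additivity, $Q_{u_0}\le 0$ on $V$, and $Q_{u_0}(w)>0$; if $w\notin X_{u_0}$ then $Q_{u_0}(w)=+\infty$ and, since $\nabla v=0$ on $Z_{u_0}$ for $v\in V$, also $Q_{u_0}(v+w)=+\infty$, so that both the additivity identity and the strict inequality hold trivially.

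I expect the main obstacle to be the rigorous treatment of the degenerate case $\kappa=0$, $1<p<2$: one must verify carefully that the form built from $\underline{\Psi}''(\nabla u_0)$ takes the value $+\infty$ exactly off $X_{u_0}$, that the compactness of $K$ persists under the weaker coercivity of $(\,\cdot\,|\,\cdot\,)_{u_0}$, and that the eigenfunctions belong to $L^\infty(\Omega)$ even though the coefficient matrix $\Psi''(\nabla u_0)$ degenerates on $Z_{u_0}$.
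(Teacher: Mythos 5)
Your architecture is the same as the paper's: diagonalize $Q_{u_0}$ on the Hilbert space $X_{u_0}$ via a compact self-adjoint operator (the paper phrases this as ``the derivative of $Q_{u_0}$ is a compact perturbation of the Riesz isomorphism''), let $V$ be the finite-dimensional non-positive eigenspace, prove that the eigenvectors lie in $L^{\infty}(\Omega)\cap W^{1,p}_0(\Omega)$, extend the splitting to $L^1(\Omega)$ using functionals with $L^{\infty}$ densities, and dispose of the directions $w\in W^{1,2}_0(\Omega)\setminus X_{u_0}$ by the value $+\infty$. The only structural difference is cosmetic: you use the eigenvalue problem $B(e,w)=\lambda\,(e\,|\,w)_{u_0}$ with $\lambda\geq 1$ and the functionals $\ell_j$, while the paper uses the pencil $Q_{u_0}(e_j,\cdot)=\lambda_j\int_\Omega e_j\,\cdot\;dx$ with $\lambda_j\leq 0$, so that $\widetilde W$ is simply the $L^2$-annihilator of $V$ in $L^1(\Omega)$.

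The genuine gap is in the eigenvector regularity step, which is the real technical content of the proposition. First, in the crucial case $\kappa=0$, $1<p<2$, your assertion that the coefficient matrix is bounded is false: on $\Omega\setminus Z_{u_0}$ one has
\[
\frac{(p-1)\nu}{|\nabla u_0|^{2-p}}\,|\xi|^2
\leq \Psi''(\nabla u_0)[\xi]^2
\leq \frac{C}{|\nabla u_0|^{2-p}}\,|\xi|^2\,,
\]
and since $2-p>0$ and $|\nabla u_0|$ is bounded, the matrix is uniformly elliptic \emph{from below} but blows up as $|\nabla u_0|\to 0$, i.e.\ on approach to $Z_{u_0}$ (it does not ``degenerate'' there, as your closing paragraph suggests; the degeneracy points the wrong way). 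Moreover, the right-hand side $D_sg(x,u_0)e$ is a priori only in $L^2$ (claiming it is in $L^\infty$ is circular), and, more seriously, the eigenvector equation is known only against test functions in $X_{u_0}$, which is in general a closed \emph{proper} subspace of $W^{1,2}_0(\Omega)$, so the equation is not distributional and off-the-shelf linear regularity theorems cannot be cited at all. The paper repairs all of this at once with a self-contained truncation argument: for a nondecreasing Lipschitz $\varphi$ with $\varphi(0)=0$ one has $\varphi(e_j)\in X_{u_0}$, the sign of the eigenvalue makes the eigenvalue term harmless, and only the lower ellipticity bound is used, giving
\[
\frac{(p-1)\nu}{\|\nabla u_0\|_{\infty}^{2-p}}
\int_{\Omega}\varphi'(e_j)|\nabla e_j|^2\,dx
\leq \int_{\Omega} D_sg(x,u_0)\,e_j\varphi(e_j)\,dx\,,
\]
which is precisely the inequality to which the Ladyzhenskaya--Ural'tseva iteration applies. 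Second, for $\kappa>0$ with $p>2$, your claim that ``the same theory'' gives $e\in C^{1,\beta}(\overline\Omega)$ is unjustified: bounded measurable --- even continuous --- coefficients yield only De Giorgi--Nash H\"older continuity of $e$ itself, not of $\nabla e$; a $C^{1,\beta}$ bound would require H\"older continuous coefficients, whereas $(\Psi_2)$ and $u_0\in C^{1,\beta}(\overline\Omega)$ give only \emph{continuity} of $\Psi''(\nabla u_0)$. What is needed to conclude $V\subseteq W^{1,p}_0(\Omega)$, and what the paper invokes, is $W^{1,q}$-regularity for elliptic equations with continuous coefficients (Simader's theorem), which gives $e_j\in W^{1,p}_0(\Omega)$ directly. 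Both conclusions you want are true, but neither follows from the regularity statements as you formulated them.
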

\begin{proof}
Let us treat in detail the case $\kappa=0$ and $1<p<2$.
Since the derivative of the smooth quadratic form
$Q_{u_0}:X_{u_0}\rightarrow\R$ is a compact perturbation of the
Riesz isomorphism, it is standard that there exists a direct sum
decomposition
\[
X_{u_0}  = V \oplus \widehat{W}
\]
such that $\dim V = m^*(f,u_0)<+\infty$,
\begin{alignat*}{3}
&\widehat{W} = \left\{w\in X_{u_0}:\,\,
\int_{\Omega} vw\,dx = 0\quad\text{for any $v\in V$}\right\}\,,\\
&Q_{u_0}(v+w) = Q_{u_0}(v) + Q_{u_0}(w)
&& \text{for any $v\in V$ and $w\in \widehat{W}$}\,,\\
&Q_{u_0}(v) \leq 0
&& \text{for any $v\in V$}\,,\\
&Q_{u_0}(w) > 0
&& \text{for any $w\in \widehat{W}\setminus\{0\}$}\,.
\end{alignat*}
Moreover, either $V=\{0\}$ or
$V=\mathrm{span}\left\{e_1,\ldots,e_{m^*}\right\}$
and each $e_j\in X_{u_0}\setminus\{0\}$ is a solution of
\[
\int_{\Omega\setminus Z_{u_0}}
\Psi''(\nabla u_0)[\nabla e_j,\nabla u]\,dx
- \int_{\Omega} D_sg(x,u_0)e_j u \,dx
= \lambda_j \int_{\Omega} e_j u\,dx
\qquad\text{for any $u\in X_{u_0}$}
\]
for some $\lambda_j\leq 0$ (which is possible only if
$\|\nabla u_0\|_{\infty}>0$).
\par
If $\varphi:\R\rightarrow\R$ is a nondecreasing Lipschitz function
with $\varphi(0)=0$, then $\varphi(e_j)\in X_{u_0}$, whence
\[
\int_{\Omega\setminus Z_{u_0}}
\varphi'(e_j)\Psi''(\nabla u_0)[\nabla e_j]^2\,dx
- \int_{\Omega} D_sg(x,u_0)e_j \varphi(e_j) \,dx
= \lambda_j \int_{\Omega} e_j \varphi(e_j)\,dx\leq 0\,.
\]
 On the other hand, we have
\begin{multline*}
\Psi''(\nabla u_0(x))[\xi]^2 \geq
\frac{(p-1)\nu}{|\nabla u_0(x)|^{2-p}}\,|\xi|^2 \geq
 \frac{(p-1)\nu}{\|\nabla u_0\|_{\infty}^{2-p}}\,|\xi|^2 \\
\qquad\text{for any $x\in\Omega\setminus Z_{u_0}$
and $\xi\in\R^N$}\,,
\end{multline*}
whence
\[
\frac{(p-1)\nu}{\|\nabla u_0\|_{\infty}^{2-p}}\,
\int_{\Omega} \varphi'(e_j)|\nabla e_j|^2\,dx
- \int_{\Omega} D_sg(x,u_0)e_j \varphi(e_j) \,dx
\leq 0\,.
\]
Since $D_sg(x,u_0)\in L^{\infty}(\Omega)$, it is standard
(see e.g.~\cite{ladyzhenskaya_uraltseva1968})
that $e_j\in L^{\infty}(\Omega)$, whence 
$V\subseteq X_{u_0}\cap L^{\infty}(\Omega)
\subseteq W^{1,p}_0(\Omega)$, as $p<2$.
\par
If we set
\[
\widetilde{W} = \left\{w\in L^1(\Omega):\,\,
\int_{\Omega} vw\,dx = 0\quad\text{for any $v\in V$}\right\}\,,
\]
then $\widetilde{W}$ is a closed linear subspace of $L^1(\Omega)$
and
\[
L^1(\Omega) = V\oplus \widetilde{W}\,.
\]
Since
\begin{alignat*}{3}
&\int_{\Omega}
\underline{\Psi}''(\nabla u_0)[\nabla u]^2\,dx
- \int_{\Omega} D_sg(x,u_0) u^2\,dx = Q_{u_0}(u)
&&\qquad\text{if $u\in X_{u_0}$}\,,\\
&\int_{\Omega}
\underline{\Psi}''(\nabla u_0)[\nabla u]^2\,dx
- \int_{\Omega} D_sg(x,u_0) u^2\,dx = +\infty
&&\qquad\text{if $u\in W^{1,2}_0(\Omega)\setminus X_{u_0}$}\,,
\end{alignat*}
the other assertions easily follow.
\par
In the case $\kappa>0$, one has $X_{u_0}=W^{1,2}_0(\Omega)$ 
and the adaptation of the previous argument is very simple
if $1<p\leq 2$.
If $p>2$, one has to remark that $\Psi''(\nabla u_0)$
is continuous.
By standard regularity results (see 
e.g.~\cite[Theorem~7.6]{simader1972}) it follows that
$e_j\in W^{1,p}_0(\Omega)$, whence 
$V\subseteq W^{1,p}_0(\Omega)$.
\end{proof}
In the following, we consider a direct sum decomposition as in
the previous proposition.
In particular, the projection
$\widetilde{P}_V:L^1(\Omega)\rightarrow V$, associated with the
direct sum decomposition, is continuous with respect to the
$L^1$-topology.
Since $V\subseteq W^{1,2}_0(\Omega)\cap L^{\infty}(\Omega)$ is
finite dimensional, it is equivalent to consider the norm
of $W^{1,2}_0(\Omega)\cap L^{\infty}(\Omega)$ on $V$.
\par
Then we set $W=\widetilde{W}\cap \w$, which is a closed linear
subspace of $\w$, so that
\[
W^{1,p}_0(\Omega) = V \oplus W
\]
and $P_V = \widetilde{P}_V\bigl|_{W^{1,p}_0}$ is 
$L^1$-continuous as well.
\par
We also set, for any $r>0$,
\begin{alignat*}{3}
& B_r && =\left\{u\in W^{1,p}_0(\Omega):\,\,
\|\nabla u\|_p < r\right\}\,,\\
& D_r && =\left\{u\in W^{1,p}_0(\Omega):\,\,
\|\nabla u\|_p\leq r\right\}\,.
\end{alignat*}
\begin{lem}
\label{lem:QconvW}
For any $M>0$, there exist $r,\delta>0$ such that, for every
$u \in (u_0+D_r)\cap W^{1, \infty}(\Omega)$ with
$\|u\|_{\infty}+ \|\nabla u\|_{\infty} \leq M$ and every
$w \in \widetilde{W} \cap W^{1,2}_0(\Omega)$, one has
\[
\int_{\Omega} \underline{\Psi}''(\nabla u)[\nabla w]^2\,dx
- \int_{\Omega} D_sg(x,u) w^2\,dx
\geq \delta \int_{\Omega} |\nabla w|^2\,dx \,.
\]
\end{lem}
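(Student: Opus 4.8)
The plan is to argue by contradiction, exploiting the lower semicontinuity provided by Theorem~\ref{thm:Qlsc} together with the strict positivity on $\widetilde{W}\cap W^{1,2}_0(\Omega)$ recorded in Proposition~\ref{prop:decomposition}. Suppose the assertion fails for some $M>0$. Then, choosing $r=\delta=1/n$, for every $n$ I obtain $u_n\in(u_0+D_{1/n})\cap W^{1,\infty}(\Omega)$ with $\|u_n\|_{\infty}+\|\nabla u_n\|_{\infty}\leq M$ and $w_n\in\widetilde{W}\cap W^{1,2}_0(\Omega)$ such that
\[
\int_{\Omega}\underline{\Psi}''(\nabla u_n)[\nabla w_n]^2\,dx
-\int_{\Omega} D_sg(x,u_n)w_n^2\,dx
< \frac{1}{n}\int_{\Omega}|\nabla w_n|^2\,dx\,.
\]
Necessarily $w_n\neq 0$, since otherwise both sides vanish; as both sides are homogeneous of degree two in $w_n$, I may normalize $\int_{\Omega}|\nabla w_n|^2\,dx=1$.

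Next I extract limits. Since $\|\nabla(u_n-u_0)\|_p\leq 1/n$, we have $u_n\to u_0$ in $W^{1,p}_0(\Omega)$, and $(u_n)$ is bounded in $L^{\infty}(\Omega)$. Because $(w_n)$ is bounded in $W^{1,2}_0(\Omega)$, up to a subsequence $w_n\rightharpoonup w$ weakly in $W^{1,2}_0(\Omega)$ and, by the Rellich theorem, $w_n\to w$ strongly in $L^2(\Omega)$, hence in $L^1(\Omega)$. Here I stress that $\widetilde{W}$ is closed in $L^1(\Omega)$ (and not merely in $W^{1,2}_0$), so $w\in\widetilde{W}\cap W^{1,2}_0(\Omega)$. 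I am then exactly in the setting of Theorem~\ref{thm:Qlsc} with $u_k=u_n$, $v_k=w_n$, $u=u_0$, $v=w$; combining its conclusion with the displayed inequality, whose right-hand side tends to $0$, yields
\[
\int_{\Omega}\underline{\Psi}''(\nabla u_0)[\nabla w]^2\,dx
-\int_{\Omega} D_sg(x,u_0)w^2\,dx\leq 0\,.
\]
By the last inequality of Proposition~\ref{prop:decomposition}, this forces $w=0$.

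It remains to turn $w=0$ into a contradiction with the normalization. Since $\|u_n\|_{\infty}\leq M$, assumption $(g_2)$ gives $|D_sg(\cdot,u_n)|\leq\widehat{C}_M$, and $w_n\to 0$ in $L^2(\Omega)$, so $\int_{\Omega} D_sg(x,u_n)w_n^2\,dx\to 0$; as $\underline{\Psi}''\geq 0$ by convexity, the displayed inequality forces $\int_{\Omega}\underline{\Psi}''(\nabla u_n)[\nabla w_n]^2\,dx\to 0$. On the other hand, the bound $|\nabla u_n|\leq M$ yields uniform ellipticity with a constant $c_M>0$. If $\kappa>0$, the scalar estimates for $\Psi''$ give $\underline{\Psi}''(\nabla u_n)[\xi]^2\geq c_M|\xi|^2$ a.e.; if $\kappa=0$ and $1<p<2$, the finiteness of the integral forces $\nabla w_n=0$ a.e. on $Z_{u_n}=\{\nabla u_n=0\}$, while off $Z_{u_n}$ one has $\underline{\Psi}''(\nabla u_n)[\xi]^2=\Psi''(\nabla u_n)[\xi]^2\geq\frac{(p-1)\nu}{M^{2-p}}\,|\xi|^2$. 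In either case $\int_{\Omega}\underline{\Psi}''(\nabla u_n)[\nabla w_n]^2\,dx\geq c_M\int_{\Omega}|\nabla w_n|^2\,dx=c_M>0$, contradicting the convergence to $0$.

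I expect the degenerate case $\kappa=0$, $1<p<2$ to be the main obstacle: one must use the $L^1$-closedness of $\widetilde{W}$ (rather than any $W^{1,2}_0$-closedness) to capture the limit $w$, and one must handle the set $Z_{u_n}$ where $\underline{\Psi}''$ takes the value $+\infty$, deducing that $\nabla w_n$ vanishes there so that the uniform lower ellipticity bound still produces the needed strictly positive constant. The remaining estimates are routine consequences of $(\Psi_1)$, $(g_2)$ and the $L^{\infty}$-bound on $\nabla u_n$.
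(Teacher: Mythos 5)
Your proof is correct and follows essentially the same argument as the paper: contradiction with normalization $\|\nabla w_n\|_2=1$, passage to a weak limit, Theorem~\ref{thm:Qlsc} combined with the strict positivity in Proposition~\ref{prop:decomposition} to force $w=0$, and then uniform ellipticity from the $W^{1,\infty}$-bound to contradict the normalization. The only difference is that you spell out explicitly the final step (the treatment of $Z_{u_n}$ when $\kappa=0$, $1<p<2$, and the constant $c_M$ when $\kappa>0$), which the paper compresses into the sentence ``Since $(\nabla v_k)$ is bounded in $L^{\infty}(\Omega)$, \dots we infer that $\nabla w_k\to 0$ in $L^2(\Omega)$.''
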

\begin{proof}
Assume, for a contradiction, that there exist a sequence
$(v_k)$ in $\w\cap W^{1,\infty}(\Omega)$, strongly convergent to
$u_0$ in $W^{1,p}_0(\Omega)$ and bounded in $W^{1,\infty}(\Omega)$,
and a sequence $(w_k)$ in $\widetilde{W} \cap W^{1,2}_0(\Omega)$ 
such that
\begin{equation}
\label{eq:QconvW}
\int_{\Omega} \underline{\Psi}''(\nabla v_k)[\nabla w_k]^2\,dx
- \int_{\Omega} D_sg(x,v_k) w_k^2\,dx < \frac{1}{k} \,
\int_{\Omega} |\nabla w_k|^2\,dx \,.
\end{equation}
Without loss of generality, we may assume that
$\|\nabla w_k\|_2=1$.
Then, up to a subsequence, $(w_k)$ is
weakly convergent to some $w$ in $W^{1,2}_0(\Omega)$.
In particular, $w\in \widetilde{W}$.
From Theorem~\ref{thm:Qlsc} we infer that
\[
\int_{\Omega} \underline{\Psi}''(\nabla u_0)[\nabla w]^2\,dx
- \int_{\Omega} D_sg(x,u_0) w^2\,dx\leq 0\,,
\]
whence $w=0$.
\par
Coming back to~\eqref{eq:QconvW}, now we deduce that
\[
\lim_k
\int_{\Omega} \underline{\Psi}''(\nabla v_k)[\nabla w_k]^2\,dx =0\,.
\]
Since $(\nabla v_k)$ is bounded in $L^{\infty}(\Omega)$,
in both cases $\kappa=0$ with $1<p<2$ and $\kappa>0$
with $1<p<\infty$ we infer that $\nabla w_k\to 0$ in $L^2(\Omega)$.
Since $\|\nabla w_k\|_2=1$, a contradiction follows.
\end{proof}
\begin{thm}
\label{thm:split}
There exist $M, r>0$ and $\beta\in]0,1]$ such that:
\begin{itemize}
\item[$(a)$]
the map $f'$ is of class~$(S)_+$ on $u_0+D_{2r}$;
\item[$(b)$]
for every $v\in V\cap D_r$, the derivative of the functional
\[
\begin{array}{ccc}
W & \rightarrow & \R \\
w &\mapsto & f(u_0+v+w)
\end{array}
\]
is of class~$(S)_+$ on $W\cap D_r$;
moreover, if $w$ is a critical point of such
a functional with $w\in D_r$, then
$v+w\in C^{1,\beta}(\overline\Omega)$ and
\[
\|v+w\|_{C^{1,\beta}} < M\,;
\]
finally, the functional $\left\{w\mapsto f(u_0+v+w)\right\}$
is strictly convex on
\begin{align*}
& \hskip50pt \bigl\{w\in W\cap D_r:\,\,
\text{$(v+w)\in W^{1,\infty}(\Omega)$
}\bigr. \\
& \hskip200pt \bigl. \text{
and $\|v+w\|_{\infty} + \|\nabla(v+w)\|_{\infty}
\leq M$}\bigr\}\,;
\end{align*}
\item[$(c)$]
$u_0$ is a strict local minimum of $f$ along
$u_0+W$ for the $W^{1,p}_0(\Omega)$-topology.
\end{itemize}
\end{thm}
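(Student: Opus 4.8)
The plan is to establish (a), then the two parts of (b), and finally (c), fixing the constants $M,r,\delta,\beta$ in an order that avoids circularity. For (a), I would use the convex coercive functional $h$ from the proof of Theorem~\ref{thm:locminW1p}, for which $h(u_0+u)\to0$ exactly when $u\to0$ in $\w$. Since $f'$ is of class~$(S)_+$ on a sublevel $\{u:h(u)\leq\rho\}$ for $\rho$ small (by~\cite[Theorem~1.2]{cingolani_degiovanni2009} and~\cite[Theorem~3.4]{almi_degiovanni2013}), it suffices to choose $r$ so small that $u_0+D_{2r}$ is contained in such a sublevel. The $(S)_+$ half of (b) is then immediate: for $v\in V\cap D_r$ and $w_k\rightharpoonup w$ in $W$ with $\limsup_k\langle f'(u_0+v+w_k),w_k-w\rangle\leq0$, the points $u_0+v+w_k$ lie in $u_0+D_{2r}$ and $\langle f'(u_0+v+w_k),(u_0+v+w_k)-(u_0+v+w)\rangle=\langle f'(u_0+v+w_k),w_k-w\rangle$, so (a) forces $\|w_k-w\|\to0$.

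The a priori regularity estimate is the heart of the matter. If $w\in W\cap D_r$ is a critical point of $\{w\mapsto f(u_0+v+w)\}$, then $f'(u_0+v+w)$ annihilates $W$, so by $\w=V\oplus W$ one has $\langle f'(u_0+v+w),\phi\rangle=\langle f'(u_0+v+w),P_V\phi\rangle$ for every $\phi\in\w$. Writing $V=\mathrm{span}\{e_1,\dots,e_{m^*}\}$ and representing the $L^1$-continuous projection by functions $\zeta_j\in L^\infty(\Omega)$, this equals $\io z\phi\,dx$ with $z=\sum_j\langle f'(u_0+v+w),e_j\rangle\zeta_j$. The coefficients $\langle f'(u_0+v+w),e_j\rangle$ are bounded uniformly for $v\in V\cap D_r$ and $w\in W\cap D_r$ (estimating $\nabla\Psi(\nabla(u_0+v+w))$ in $L^{p'}$ and $g(x,u_0+v+w)$ through $(g_1)$ and the Sobolev embedding), whence $\|z\|_\infty$ is bounded. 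Thus $u_0+v+w$ weakly solves $-\dvg[\nabla\Psi(\nabla(\cdot))]=g(x,\cdot)+z$; Theorem~\ref{thm:regLinfty} (applicable once $2r$ is below its radius) yields a uniform $L^\infty$ bound, and Theorem~\ref{thm:regC1}, with $w_0=g(x,u_0+v+w)+z\in L^\infty(\Omega)$ and $w_1=0$, yields $u_0+v+w\in C^{1,\beta}(\overline\Omega)$ with a bound uniform over all $r$ below a fixed threshold $\bar r$. I would record the resulting bound $M_0$ on $\|v+w\|_{C^{1,\beta}}$ and set $M=M_0+1$.

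With $M$ fixed, I would apply Lemma~\ref{lem:QconvW} with $M'=M+\|u_0\|_{C^1}$ to get $r_L,\delta>0$ and take $r=\min\{\bar r,r_L/2\}$. For the strict convexity, given $w_1\neq w_2$ in the (convex) constraint set, put $\eta=w_2-w_1\in W\cap W^{1,\infty}(\Omega)\subseteq\widetilde W\cap W^{1,2}_0(\Omega)$ and $\psi(t)=f(u_0+v+w_1+t\eta)$. Combining the second-order Taylor identity of Proposition~\ref{prop:PsiTaylor} for the $\Psi$-term with the ordinary $C^2$ expansion of $G(x,\cdot)$ (legitimate since $D_sg(\cdot,u(\sigma))\in L^\infty(\Omega)$), every intermediate point $u(\sigma)=u_0+v+w_1+\sigma\eta$ lies in $u_0+D_{2r}\subseteq u_0+D_{r_L}$ and satisfies the bound $M'$ by convexity, so Lemma~\ref{lem:QconvW} gives $\psi(t)-\psi(s)-\psi'(s)(t-s)\geq\frac{\delta}{2}(t-s)^2\|\nabla\eta\|_2^2>0$. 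This is the required strict convexity, and together with the previous step it proves all of (b).

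Finally, for (c) I would apply the strict convexity of (b) with $v=0$: since $\langle f'(u_0),w\rangle=0$ for $w\in W$, the point $0$ is a critical point of $\{w\mapsto f(u_0+w)\}$, so the estimate above gives $f(u_0+w)-f(u_0)\geq\frac{\delta}{2}\|\nabla w\|_2^2>0$ for $w\neq0$ in the constraint set, which contains a $C^1$-neighbourhood of $0$ in $W\cap C^1(\overline\Omega)$. Hence $u_0$ is a strict local minimum along $u_0+(W\cap C^1(\overline\Omega))$ in the $C^1(\overline\Omega)$-topology, and Theorem~\ref{thm:locminW1p} upgrades this to the $\w$-topology. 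The main obstacle I anticipate is the uniform $C^{1,\beta}$ a priori bound of the second step: deriving the constrained Euler--Lagrange equation with a uniformly bounded $L^\infty$ datum via the $L^1$-continuity of $P_V$, and arranging the constants so that $M$ is produced before Lemma~\ref{lem:QconvW} is invoked while the bound still holds for the final, smaller $r$.
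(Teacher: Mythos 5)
Your proposal is correct and follows essentially the same route as the paper's own proof: $(S)_+$ near $u_0$ via the auxiliary convex functional $h$ and the results of \cite{cingolani_degiovanni2009, almi_degiovanni2013}, the constrained Euler--Lagrange equation with an $L^\infty$ multiplier $z$ obtained from the $L^1$-continuity of $P_V$ followed by Theorems~\ref{thm:regLinfty} and~\ref{thm:regC1}, strict convexity from Proposition~\ref{prop:PsiTaylor} combined with Lemma~\ref{lem:QconvW}, and the passage from the $C^1(\overline\Omega)$-minimality to the $W^{1,p}_0(\Omega)$-minimality via Theorem~\ref{thm:locminW1p}. The only detail you assert without comment --- that $W\cap W^{1,\infty}(\Omega)\subseteq W^{1,2}_0(\Omega)$, which for $1<p<2$ requires the remark that $W^{1,p}_0(\Omega)\cap W^{1,2}(\Omega)\subseteq W^{1,2}_0(\Omega)$ because $\partial\Omega$ is smooth --- is handled the same way in the paper, and your bookkeeping of the constants (fixing $M$ from the regularity step before invoking Lemma~\ref{lem:QconvW} with $M+\|u_0\|_{C^1}$, then shrinking $r$ last) is if anything more explicit than the paper's ``possibly by decreasing $r$''.
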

\begin{proof}
As already observed in the proof of Theorem~\ref{thm:locminW1p},
there exists $r>0$ such that the map $f'$ is of class~$(S)_+$ 
on $u_0+D_{2r}$.
It follows that $\{w \mapsto  f(u_0+v+w)\}$
is of class~$(S)_+$ on $W\cap D_r$
for any $v\in V\cap D_r$.
Moreover, if $w$ is a critical point, we have
\[
\langle f'(u_0+v+w),u-P_Vu\rangle = 0
\qquad\text{for any $u\in W^{1,p}_0(\Omega)$}\,,
\]
whence
\[
\langle f'(u_0+v+w),u\rangle =
\langle f'(u_0+v+w),P_Vu\rangle
\qquad\text{for any $u\in W^{1,p}_0(\Omega)$}\,.
\]
Since $P_V$ is continuous from the topology of
$L^1(\Omega)$ to that of $W^{1,p}_0(\Omega)$
and $f'$ is bounded on bounded sets, it follows that
\[
\langle f'(u_0+v+w),P_Vu\rangle = \int_{\Omega} z u\,dx
\qquad\text{for any $u\in W^{1,p}_0(\Omega)$}
\]
with $z$ uniformly bounded in $L^{\infty}(\Omega)$
with respect to $v$ and $w$, whence
\[
\langle f'(u_0+v+w),u\rangle = \int_{\Omega} z u\,dx
\qquad\text{for any $u\in W^{1,p}_0(\Omega)$}\,.
\]
From Theorems~\ref{thm:regLinfty} and~\ref{thm:regC1},
possibly by decreasing $r$, we conclude that
$u_0+v+w$, hence $v+w$, is uniformly bounded in
$C^{1,\beta}(\overline\Omega)$.
\par
Finally, again by decreasing $r$, we infer by
Lemma~\ref{lem:QconvW} that
\begin{equation}
\label{eq:delta}
\int_{\Omega} \underline{\Psi}''(\nabla(u_0+u))[\nabla w]^2\,dx
- \int_{\Omega} D_sg(x,u_0+u) w^2\,dx
\geq \delta \int_{\Omega} |\nabla w|^2\,dx
\end{equation}
for every
$u \in D_{2r}\cap W^{1, \infty}(\Omega)$ with
$\|u\|_{\infty}+ \|\nabla u\|_{\infty} \leq M$ and every
$w \in \widetilde{W} \cap W^{1,2}_0(\Omega)$.
\par
If $v\in V\cap D_r$, $t \in [0,1]$ and $w_0, w_1\in W\cap D_r$
with $(v+w_j)\in W^{1,\infty}(\Omega)$ and
\[
\|v+w_j\|_{\infty} + \|\nabla(v+w_j)\|_{\infty}
\leq M\,,
\]
we have $w_j\in W^{1,p}_0(\Omega)\cap W^{1,2}(\Omega)$, hence
$w_j \in W^{1,2}_0(\Omega)$, as $\partial\Omega$ is smooth enough.
By Proposition~\ref{prop:PsiTaylor} and~\eqref{eq:delta}
we easily deduce that
\begin{multline*}
(1-t)f(u_0+v+w_0)+t f(u_0+v+w_1) \\
\geq f(u_0+v+(1-t)w_0+tw_1)
+ \dfrac{\delta}{2}\,t(1-t)\,
\int_{\Omega} |\nabla w_1-\nabla w_0|^2\,dx\,.
\end{multline*}
Therefore $\left\{w\mapsto f(u_0+v+w)\right\}$
is strictly convex.
\par
In particular, the critical point $u_0$ is a strict local minimum 
of $f$ along $u_0+(W\cap C^1(\overline{\Omega}))$ for the 
$C^1(\overline{\Omega})$-topology.
From Theorem~\ref{thm:locminW1p} we infer that
$u_0$ is a strict local minimum of~$f$ along
$u_0+W$ for the $W^{1,p}_0(\Omega)$-topology.
\end{proof}
\begin{thm}
\label{thm:parmin}
There exist $M, r>0$, $\beta\in]0,1]$ and  $\varrho \in ]0,r]$
such that, for every $v \in V\cap D_{\varrho}$, there exists one
and only one $\overline  w \in W \cap D_r$ such that
\[
f(u_0 + v + \overline w)\leq f(u_0 + v + w)
\qquad\text{for any $w \in W \cap D_r$}\,.
\]
Moreover,
$v + \overline w \in C^{1,\beta}(\overline\Omega)$
with $\|v + \overline w\|_{C^{1,\beta}}\leq M$,
$\overline w\in B_r$ and $\overline w$ is the unique
critical point of $\{w\mapsto f(u_0+v+w)\}$ in
$W\cap D_r$.
\par
Finally, if we set $\psi(v)=\overline w$, the map
\[
\left\{v\mapsto v+\psi(v)\right\}
\]
is continuous from $V\cap D_{\varrho}$ into
$C^1(\overline\Omega)$, while the map $\psi$ is
continuous from $V\cap D_{\varrho}$ into
$W^{1,p}_0(\Omega)\cap L^{\infty}(\Omega)$.
Moreover, $\psi(0)=0$.
\par
In the case $\kappa>0$, the map $\psi$
is also of class $C^1$ from $V\cap B_{\varrho}$ into
$W^{1,2}_0(\Omega)$ and, for every $z\in V\cap B_{\varrho}$
and $v\in V$, we have that $\psi'(z)v$ is the minimum point of the
functional
\begin{multline*}
\biggl\{w\mapsto \dfrac{1}{2}\,
\int_{\Omega} \left\{\Psi''(\nabla u)[\nabla w]^2 
- D_sg(x,u) w^2\right\}\,dx \biggr. \\
\biggl. + \int_{\Omega} \left\{\Psi''(\nabla u)[\nabla v,\nabla w]
- D_sg(x,u) v w\right\}\,dx
\biggr\}
\end{multline*}
on $\widetilde{W}\cap W^{1,2}_0(\Omega)$, where $u=u_0+z+\psi(z)$.
Moreover, $\psi'(0)=0$.
\end{thm}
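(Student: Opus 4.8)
The plan is to obtain $\overline w$ as a constrained minimizer on $W\cap D_r$, to show the constraint is inactive so that $\overline w$ becomes a genuine critical point, and then to read off uniqueness, regularity and differentiability from Theorem~\ref{thm:split}. First I would fix $M,r,\beta$ as in Theorem~\ref{thm:split}. For each $v\in V\cap D_r$ the set $W\cap D_r$ is closed, bounded and convex in the reflexive space $\w$, hence weakly compact, and by Theorem~\ref{thm:split}$(a)$ together with Proposition~\ref{prop:S+}$(a)$ the functional $\{w\mapsto f(u_0+v+w)\}$ is weakly lower semicontinuous on it; so a minimizer $\overline w$ exists. To force $\overline w\in B_r$, I would set $c_r=\inf\{f(u_0+w):w\in W,\ \|\nabla w\|_p=r\}$ and check $c_r>f(u_0)$: along a minimizing sequence $(w_k)$ with $\|\nabla w_k\|_p=r$, the weak limit $w$ satisfies $f(u_0+w)>f(u_0)$ by Theorem~\ref{thm:split}$(c)$ if $w\neq0$, while $w=0$ together with $c_r\le f(u_0)$ would give, via Proposition~\ref{prop:S+}$(b)$, strong convergence $w_k\to0$, contradicting $\|\nabla w_k\|_p=r$. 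Since $f$ is Lipschitz on bounded sets, there is $L>0$ with $|f(u_0+v+w)-f(u_0+w)|\le L\varrho$ for $\|\nabla w\|_p\le r$ and $v\in V\cap D_\varrho$; comparing the competitor $w=0$, so that $f(u_0+v+\overline w)\le f(u_0+v)\le f(u_0)+L\varrho$, with the boundary bound $f(u_0+v+w)\ge c_r-L\varrho$, and choosing $\varrho$ so small that $2L\varrho<c_r-f(u_0)$, I conclude that no minimizer meets $\partial D_r$, whence $\overline w\in B_r$.

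Being an interior minimizer, $\overline w$ is a critical point of $\{w\mapsto f(u_0+v+w)\}$, so Theorem~\ref{thm:split}$(b)$ yields $v+\overline w\in C^{1,\beta}(\overline\Omega)$ with $\|v+\overline w\|_{C^{1,\beta}}<M$; in particular $\|v+\overline w\|_\infty+\|\nabla(v+\overline w)\|_\infty<M$, so $\overline w$ lies in the convex set on which that functional is strictly convex. The same regularity places every critical point in $D_r$ in this set, and strict convexity then identifies any two critical points, giving uniqueness of the critical point and of the minimizer. For $v=0$ the strict local minimum of Theorem~\ref{thm:split}$(c)$ makes $w=0$ the unique minimizer, so $\psi(0)=0$. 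For continuity I would exploit the uniform bound $\|v+\psi(v)\|_{C^{1,\beta}}<M$: if $v_n\to v$ in $V\cap D_\varrho$, the functions $u_0+v_n+\psi(v_n)$ are precompact in $C^1(\overline\Omega)$, and any $C^1$-limit $u_0+v+\tilde w$ has $\tilde w\in W\cap D_r$ and, passing to the limit in the Euler equation (using that $\nabla\Psi$ and $g$ are continuous and the arguments stay uniformly bounded), is a critical point; by uniqueness $\tilde w=\psi(v)$, so the whole sequence converges. Thus $\{v\mapsto v+\psi(v)\}$ is continuous into $C^1(\overline\Omega)$, and subtracting the $C^1$-convergent $v_n$ gives continuity of $\psi$ into $\w\cap L^\infty(\Omega)$.

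The hard part is the $C^1$ regularity of $\psi$ when $\kappa>0$, and here the obstacle is precisely that $f$ need not be of class $C^2$ on $\w$. I would avoid differentiating the Nemytskii operators on $\w$ and argue instead by difference quotients, using the a priori regularity to linearize with bounded coefficients. Writing $u=u_0+z+\psi(z)$ and $u^v=u_0+z+v+\psi(z+v)$, subtraction of the two Euler equations gives
\[
\int_\Omega A\,\nabla\bigl(v+\psi(z+v)-\psi(z)\bigr)\cdot\nabla\eta\,dx
-\int_\Omega b\,\bigl(v+\psi(z+v)-\psi(z)\bigr)\eta\,dx=0
\qquad(\eta\in W)\,,
\]
where $A(x)=\int_0^1\Psi''(\nabla u+t\nabla(u^v-u))\,dt$ and $b(x)=\int_0^1 D_sg(x,u+t(u^v-u))\,dt$. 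Because $u$ and $u^v$ are uniformly bounded in $C^{1,\beta}(\overline\Omega)$, the matrix $A$ is uniformly elliptic and $b\in L^\infty(\Omega)$, and as $v\to0$ one has $u^v\to u$ in $C^1$, so $A\to\Psi''(\nabla u)$ and $b\to D_sg(x,u)$ in $L^\infty$. The bilinear form with coefficients $A,b$ is then, for small $v$, a small perturbation of the coercive form of Lemma~\ref{lem:QconvW}, hence coercive on $\widetilde W\cap W^{1,2}_0(\Omega)$; Lax--Milgram bounds $\psi(z+v)-\psi(z)$ linearly in $v$ in $W^{1,2}_0(\Omega)$ and, letting $v\to0$, identifies $\psi'(z)v$ as the unique solution in $\widetilde W\cap W^{1,2}_0(\Omega)$ of the linearized equation, that is, as the minimum point of the quadratic functional in the statement. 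Finally, at $z=0$ one has $u=u_0$ by $\psi(0)=0$, and the $Q_{u_0}$-orthogonality of $V$ and $\widetilde W$ from Proposition~\ref{prop:decomposition} annihilates the linear term, reducing that functional to $\tfrac12 Q_{u_0}$ on $\widetilde W\cap W^{1,2}_0(\Omega)$, whose minimum is $0$; hence $\psi'(0)=0$.
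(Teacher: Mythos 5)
Your construction follows the paper's own proof in all essentials: minimize $\{w\mapsto f(u_0+v+w)\}$ on the weakly compact set $W\cap D_r$ using Proposition~\ref{prop:S+} and Theorem~\ref{thm:split}$(a)$; push the minimizer off the sphere so it becomes a critical point; obtain regularity and uniqueness from Theorem~\ref{thm:split}$(b)$ and strict convexity; get continuity from $C^{1,\beta}$-compactness plus uniqueness; and, for $\kappa>0$, obtain differentiability from the $t$-averaged linearized equation together with the coercivity of Lemma~\ref{lem:QconvW}. The only genuine variant is the interiorness step, where the paper argues by contradiction with sequences and the $(S)_+$ property, while you prove the quantitative gap $c_r>f(u_0)$ (by the same compactness tools) and then absorb the perturbation by $v\in V\cap D_\varrho$ with a uniform Lipschitz bound; this is correct and makes the choice of $\varrho$ explicit. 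Two small slips: you must first shrink $r$ so that the strict minimality of Theorem~\ref{thm:split}$(c)$ holds on all of $W\cap D_r$ (the paper does this with ``we may suppose''), and in the continuity step you should subtract $v_n$ in $W^{1,p}_0(\Omega)\cap L^{\infty}(\Omega)$ rather than in $C^1(\overline\Omega)$, since elements of $V$ need not belong to $C^1(\overline\Omega)$.

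The part that needs repair is the $C^1$-regularity of $\psi$ for $\kappa>0$, in two places. First, the claim that $b\to D_sg(x,u)$ in $L^{\infty}(\Omega)$ as $v\to 0$ is not justified: assumptions $(g_1)$--$(g_2)$ give continuity of $D_sg(x,\cdot)$ for a.e.\ $x$ and a local uniform bound, but no modulus of continuity uniform in $x$, so uniform convergence of $u^v$ to $u$ only yields $b\to D_sg(x,u)$ a.e.\ with a uniform $L^{\infty}$ bound; as written, your ``small perturbation of a coercive form'' step fails. The repair is that coercivity of the $(A,b)$-form needs no perturbation argument at all: each $\gamma_t=u+t(u^v-u)$ lies, by convexity of the $C^{1,\beta}$-bound and of $D_{2r}$, in the set where the estimate of Lemma~\ref{lem:QconvW} holds, so you can integrate that estimate in $t$; and in the remainder estimates the zero-order coefficient difference is always paired with functions ranging in compact subsets of finite-dimensional spaces (namely $v/\|v\|\in V$ and $L_z(v/\|v\|)$), where a.e.\ convergence plus domination suffices. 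Second, your difference quotients are taken from the fixed base point $z$, which proves Fr\'echet differentiability of $\psi$ at each $z$ with $\psi'(z)=L_z$, but not that $\psi$ is of class $C^1$: you still owe the continuity of $z\mapsto\psi'(z)$. This is exactly why the paper lets both endpoints move, proving
\[
\lim_{\substack{(v_0,v_1)\to(z,z)\\ v_0\neq v_1}}
\frac{\|\nabla(\psi(v_1)-\psi(v_0)-L_z(v_1-v_0))\|_2}{\|\nabla(v_1-v_0)\|_2}=0\,,
\]
that is, strict differentiability, which yields continuity of the derivative for free; alternatively, you can compare $L_{z'}$ with $L_z$ directly through the two coercive linearized equations as $z'\to z$.
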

\begin{proof}
Let $M, r>0$ and $\beta\in]0,1]$ be as in Theorem~\ref{thm:split}.
In particular, we may suppose that
$f(u_0)<f(u_0+w)$ for every $w\in W\cap D_r$ with $w\neq 0$.
By Lemma~\ref{lem:QconvW}
we may also assume that there exists $\delta>0$ such that
\begin{equation}
\label{eq:QcoercW}
\int_{\Omega} \underline{\Psi}''(\nabla(u_0+u))[\nabla w]^2\,dx
- \int_{\Omega} D_sg(x,u_0+u) w^2\,dx
\geq \delta \int_{\Omega} |\nabla w|^2\,dx
\end{equation}
for every
$u \in D_{2r}\cap C^{1,\beta}(\overline\Omega)$
with  $\|u\|_{C^{1,\beta}} \leq M$ and every
$w \in \widetilde{W} \cap W^{1,2}_0(\Omega)$.
\par
We claim that there exists $\varrho\in]0,r]$
such that
\[
f(u_0+v)<f(u_0+v+w)
\qquad\text{for any $v\in V\cap D_{\varrho}$
and any $w\in W$ with $\|\nabla w\|_p = r$.}
\]
By contradiction, let $(v_k)$ be a sequence in $V$
with $v_k\to 0$ and let $(w_k)$ be a sequence in $W$
with $\|\nabla w_k\|_p = r$ and
$f(u_0+v_k)\geq f(u_0+v_k+w_k)$.
Up to a subsequence, $(w_k)$ is weakly convergent
to some $w\in W\cap D_r$.
Then $(u_0+v_k+w_k)$ is weakly convergent to $u_0+w$ with
\[
\limsup_k f(u_0+v_k+w_k) \leq
\lim_k f(u_0+v_k) = f(u_0) \leq f(u_0+w)\,.
\]
Combining Proposition~\ref{prop:S+} with Theorem~\ref{thm:split},
we deduce that $(u_0+v_k+w_k)$ is strongly convergent to $u_0+w$,
whence $f(u_0+w)=f(u_0)$ with $\|\nabla w\|_p = r$,
and a contradiction follows.
\par
Again from Proposition~\ref{prop:S+} and Theorem~\ref{thm:split}
we know that $\{w\mapsto f(u_0+v+w)\}$ is weakly lower
semicontinuous on $W\cap D_r$ for any $v\in V\cap D_{\varrho}$.
Therefore there exists a minimum point
$\overline w\in W\cap D_r$ and in fact
$\overline w \in B_r$.
In particular, we have
\[
\langle f'(u_0 + v + \overline w),w\rangle = 0
\qquad\text{for any $w\in W$}\,.
\]
From Theorem~\ref{thm:split} we infer that
$v + \overline w \in C^{1,\beta}(\overline\Omega)$
with $\|v + \overline w\|_{C^{1,\beta}}\leq M$.
Since $\left\{w\mapsto f(u_0+v+w)\right\}$ is strictly
convex on
\[
\left\{w\in W\cap D_r:\,\,
\text{$(v+w)\in W^{1,\infty}(\Omega)$
and $\|v+w\|_{\infty} + \|\nabla(v+w)\|_{\infty}
\leq M$}\right\}\,,
\]
the minimum is unique.
If $v=0$, then $\overline w=0$.
\par
Finally, if we set $\psi(v)=\overline w$, the map
$\left\{v\mapsto v+\psi(v)\right\}$
is defined from $V\cap D_{\varrho}$ into
\[
\left\{u\in C^{1,\beta}(\overline\Omega):\,\,
\|u\|_{C^{1,\beta}}\leq M\right\}\,,
\]
which is a compact subset of $C^1(\overline\Omega)$,
and has closed graph, as $f$ is continuous.
Therefore it is a continuous map.
The continuity of $\psi$ follows.
\par
In the case $\kappa>0$, the function $\Psi$ is of
class~$C^2$ on $\R^N$.
Therefore, there exists $C>0$ such that
\begin{equation}
\label{eq:Qcont}
\left|\int_{\Omega} \Psi''(\nabla(u_0+u))
[\nabla u_1,\nabla u_2]\,dx
- \int_{\Omega} D_sg(x,u_0+u) u_1 u_2\,dx\right|
\leq C \,\|\nabla u_1\|_2 \,\|\nabla u_2\|_2
\end{equation}
for every
$u \in D_{2r}\cap C^{1,\beta}(\overline\Omega)$
with  $\|u\|_{C^{1,\beta}} \leq M$ and every
$u_1, u_2 \in W^{1,2}_0(\Omega)$.
Moreover, we have
\begin{multline}
\label{eq:f'Lag}
\langle f'(u_0+v_1+\psi(v_1)),u\rangle
- \langle f'(u_0+v_0+\psi(v_0)),u\rangle \\
= \int_0^1 \int_{\Omega} \biggl\{\Psi''(\nabla \gamma_t)
[\nabla(v_1-v_0+\psi(v_1)-\psi(v_0)),\nabla u] \biggr. \\
- \biggl.
D_sg(x,\gamma_t) (v_1-v_0+\psi(v_1)-\psi(v_0)) u\biggr\}\,dx\,dt
\end{multline}
for any $v_0, v_1\in V\cap D_{\varrho}$ and
$u\in W^{1,2}_0(\Omega)$, where
\[
\gamma_t=u_0+v_0+\psi(v_0)+t(v_1-v_0+\psi(v_1)-\psi(v_0))\,.
\]
Now let $z\in V\cap B_{\varrho}$ and let $u=u_0+z+\psi(z)$.
From~\eqref{eq:QcoercW} and~\eqref{eq:Qcont} it follows that,
for every $v\in V$, the functional
\begin{multline*}
\biggl\{w\mapsto \dfrac{1}{2}\,
\int_{\Omega} \left\{\Psi''(\nabla u)[\nabla w]^2
- D_sg(x,u) w^2\right\}\,dx \biggr. \\
\biggl. + \int_{\Omega} \left\{\Psi''(\nabla u)[\nabla v,\nabla w]
- D_sg(x,u) v w\right\}\,dx
\biggr\}
\end{multline*}
admits one and only one minimum point $L_zv$ in
$\widetilde{W}\cap W^{1,2}_0(\Omega)$, which satisfies
\begin{multline}
\label{eq:psi'}
\int_{\Omega} \left\{\Psi''(\nabla u)[\nabla(L_zv),\nabla w]
- D_sg(x,u) (L_zv) w\right\}\,dx \\
= - \int_{\Omega} \left\{\Psi''(\nabla u)[\nabla v,\nabla w]
- D_sg(x,u) v w\right\}\,dx
\quad\text{for any $w\in \widetilde{W}\cap W^{1,2}_0(\Omega)$}\,.
\end{multline}
Moreover, the map $L_z:V\rightarrow W^{1,2}_0(\Omega)$ is linear and
continuous, as $V$ is finite dimensional.
Since $Q_{u_0}(v+w)=Q_{u_0}(v)+Q_{u_0}(w)$ for any $v\in V$ and
$w\in \widetilde{W}\cap W^{1,2}_0(\Omega)$, we also have $L_0=0$.
\par
By~\eqref{eq:f'Lag}, for every $v_0, v_1\in V\cap B_{\varrho}$ and
$w\in \widetilde{W}\cap W^{1,2}_0(\Omega)$, it holds
\begin{alignat*}{3}
&0 &&= \langle f'(u_0+v_1+\psi(v_1)),w\rangle
- \langle f'(u_0+v_0+\psi(v_0)),w\rangle \\
&&&
= \int_0^1 \int_{\Omega} \biggl\{\Psi''(\nabla \gamma_t)
[\nabla(v_1-v_0+\psi(v_1)-\psi(v_0)),\nabla w] \biggr. \\
&&& \qquad\qquad\qquad\qquad\qquad\qquad
- \biggl.
D_sg(x,\gamma_t) (v_1-v_0+\psi(v_1)-\psi(v_0)) w\biggr\}\,dx\,dt\,.
\end{alignat*}
Taking into account~\eqref{eq:psi'}, we deduce that
\[
\begin{split}
\int_0^1 \int_{\Omega} & \biggl\{ \Psi''(\nabla \gamma_t)
[\nabla(\psi(v_1)-\psi(v_0)),\nabla w]
- D_sg(x,\gamma_t) (\psi(v_1)-\psi(v_0)) w \biggr\}\,dx\,dt \\
& \qquad\qquad
- \int_{\Omega}
\biggl\{\Psi''(\nabla u)[\nabla(L_z(v_1-v_0)),\nabla w]
- D_sg(x,u) (L_z(v_1-v_0)) w\biggr\}\,dx \\
& =
- \int_0^1 \int_{\Omega} \biggl\{
\left[\Psi''(\nabla \gamma_t) - \Psi''(\nabla u)\right]
[\nabla(v_1-v_0),\nabla w] \\
& \qquad\qquad\qquad\qquad\qquad\qquad\qquad
- \left[D_sg(x,\gamma_t) - D_sg(x,u)\right](v_1-v_0) w\biggr\}
\,dx\,dt\,.
\end{split}
\]
It follows
\[
\begin{split}
\int_0^1 \int_{\Omega} & \biggl\{ \Psi''(\nabla \gamma_t)
[\nabla(\psi(v_1)-\psi(v_0)-L_z(v_1-v_0)),\nabla w] \\
& \qquad\qquad\qquad\qquad
- D_sg(x,\gamma_t) (\psi(v_1)-\psi(v_0)-L_z(v_1-v_0)) w
\biggr\}\,dx\,dt \\
& =
- \int_0^1 \int_{\Omega}\biggl\{
\left[\Psi''(\nabla \gamma_t) - \Psi''(\nabla u)\right]
[\nabla(L_z(v_1-v_0)),\nabla w] \\
& \qquad\qquad\qquad\qquad\qquad
- \left[D_sg(x,\gamma_t) - D_sg(x,u)\right]
(L_z(v_1-v_0)) w\biggr\}\,dx\,dt \\
& \qquad
- \int_0^1 \int_{\Omega} \biggl\{
\left[\Psi''(\nabla \gamma_t) - \Psi''(\nabla u)\right]
[\nabla (v_1-v_0),\nabla w] \\
& \qquad\qquad\qquad\qquad\qquad\qquad
- \left[
D_sg(x,\gamma_t) - D_sg(x,u)\right](v_1-v_0) w\biggr\}\,dx\,dt \,.
\end{split}
\]
Since the map $\left\{v\mapsto v+\psi(v)\right\}$
is continuous from $V\cap B_{\varrho}$ into
$C^1(\overline\Omega)$, from~\eqref{eq:QcoercW} we infer that
\[
\lim_{\substack{(v_0,v_1)\to (z,z)\\ v_0\neq v_1}}\,
\dfrac{\| \nabla(\psi(v_1)-\psi(v_0)-L_z(v_1-v_0))\|_2}{
\|\nabla(v_1-v_0)\|_2} = 0\,.
\]
Therefore $\psi$ is of class~$C^1$ from
$V\cap B_{\varrho}$ into $W^{1,2}_0(\Omega)$
and $\psi'(z) = L_z$.
\end{proof}


\section{The finite dimensional reduction}
\label{sect:red}
Throughout this section we keep the assumptions and the notations
of Section~\ref{sect:parmin}.
We also define the reduced functional
$\varphi:V\cap B_{\varrho}\rightarrow\R$ as
\[
\varphi(v) = f(u_0+v+\psi(v))
=\min\left\{f(u_0+v+w):\,\,w\in W\cap D_r\right\}\,.
\]
\begin{thm}
\label{thm:phiC1}
Let $\kappa>0$ with $1<p<\infty$
or $\kappa=0$ with $1<p<2$.
Then the functional $\varphi$ is of class $C^1$ and
\begin{equation}
\label{eq:phiC1}
\langle\varphi'(z),v\rangle =
\langle f'(u_0+z+\psi(z)),v\rangle
\qquad\text{for any $z\in V\cap B_{\varrho}$ and $v\in V$}\,.
\end{equation}
In particular, $0$ is a critical point of $\varphi$.
Moreover, we have
\[
C_m(\varphi,0) \approx C_m(f,u_0)
\qquad\text{for any $m \geq 0$}\,.
\]
Finally, $0$ is an isolated critical point of~$\varphi$ if and
only if $u_0$ is an isolated critical point of~$f$.
\end{thm}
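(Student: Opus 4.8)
First I would prove that $\varphi$ is of class $C^1$ with~\eqref{eq:phiC1} by an argument using only the continuity of $\psi$ and of $f'$, so that it covers at once the cases $\kappa>0$ and $\kappa=0$ (in the latter $\psi$ need not be differentiable). Fix $z\in V\cap B_\varrho$ and let $v\in V$ be small. Since $\psi(z)$ and $\psi(z+v)$ both belong to $W\cap D_r$ and are the minimizers of $\{w\mapsto f(u_0+z+w)\}$ and of $\{w\mapsto f(u_0+z+v+w)\}$ on $W\cap D_r$ respectively (Theorem~\ref{thm:parmin}), the two minimality inequalities sandwich the increment of $\varphi$:
\begin{multline*}
f(u_0+z+v+\psi(z+v))-f(u_0+z+\psi(z+v)) \\
\le \varphi(z+v)-\varphi(z)
\le f(u_0+z+v+\psi(z))-f(u_0+z+\psi(z)).
\end{multline*}
By Fr\'echet differentiability of $f$ at $u_0+z+\psi(z)$, the right-hand side equals $\langle f'(u_0+z+\psi(z)),v\rangle+o(\|v\|)$; the left-hand side, written as $\int_0^1\langle f'(u_0+z+\psi(z+v)+tv),v\rangle\,dt$, differs from the same quantity by at most $\sup_t\|f'(u_0+z+\psi(z+v)+tv)-f'(u_0+z+\psi(z))\|\,\|v\|=o(\|v\|)$, thanks to the continuity of $f'$ and to $\psi(z+v)\to\psi(z)$. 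Hence $\varphi$ is Fr\'echet differentiable with~\eqref{eq:phiC1}, and $\varphi'$ is continuous because both $z\mapsto u_0+z+\psi(z)$ and $f'$ are. Taking $z=0$ and using $\psi(0)=0$ and $f'(u_0)=0$ gives $\varphi'(0)=0$. For the isolatedness statement I would observe that, by the uniqueness in Theorem~\ref{thm:parmin}, a point $u_0+v+w$ with $v\in V\cap B_\varrho$ and $w\in W\cap D_r$ is critical for $f$ if and only if $w=\psi(v)$ and, through~\eqref{eq:phiC1}, $v$ is critical for $\varphi$; thus $v\mapsto u_0+v+\psi(v)$ is a homeomorphism (with inverse $u\mapsto P_V(u-u_0)$) carrying the critical points of $\varphi$ near $0$ onto those of $f$ near $u_0$, so the two are simultaneously isolated or not.

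For the critical groups, set $c:=f(u_0)=\varphi(0)$. Since these groups are local, I would first invoke excision to replace $f^c$ by its trace on a neighborhood of $u_0$, and then straighten the graph of $\psi$ via the homeomorphism $\Xi(v,w)=u_0+v+\psi(v)+w$ (its inverse is continuous because $\psi$ is), which maps a neighborhood of $(0,0)$ onto one of $u_0$ and turns $f$ into $\tilde F(v,w)=f(u_0+v+\psi(v)+w)$. Then $\tilde F(v,0)=\varphi(v)$, and the uniqueness of the minimizer in Theorem~\ref{thm:parmin} gives $\tilde F(v,w)>\varphi(v)$ whenever $w\neq0$ and $\psi(v)+w\in W\cap D_r$. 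The decisive point is the fiber over $v=0$: since $\psi(0)=0$, one has $\tilde F(0,w)=f(u_0+w)>f(u_0)=c$ for every $w\in(W\cap D_r)\setminus\{0\}$, so the only point of $\{\tilde F\le c\}$ with vanishing $V$-component is $(0,0)$ itself. This is precisely what prevents the collapse of the punctured sublevels that would otherwise obstruct the comparison of the two pairs.

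I would then build a fiber-preserving deformation retraction of $(\{\tilde F\le c\},\{\tilde F\le c\}\setminus\{(0,0)\})$ onto $(\{\varphi\le c\}\times\{0\},(\{\varphi\le c\}\setminus\{0\})\times\{0\})$ by pushing, for each fixed $v$, the variable $w$ down $\tilde F(v,\cdot)$ to its unique minimum $w=0$. On $W\cap D_r$ the functional $\tilde F(v,\cdot)$ has $w=0$ as its only critical point and, by Theorem~\ref{thm:split}, is of class $(S)_+$, hence satisfies the Palais--Smale condition; the first and second deformation lemmas in the $(S)_+$ framework (see~\cite{chang1993,degiovanni2011,mawhin_willem1989}) therefore provide the required descent, which is made uniform in $v$ by the coercivity estimate~\eqref{eq:QcoercW}. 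Since the deformation keeps $v$ fixed and the only sublevel point over $v=0$ is the excluded $(0,0)$, it restricts to the punctured spaces and is a genuine deformation retraction of pairs; after a further excision on the $V$-side this identifies the two pairs and yields $C_m(f,u_0)\approx C_m(\varphi,0)$ for every $m$.

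The main obstacle will be the construction of this parametrized deformation with continuity up to $v=0$, where the fiber $\{w:\tilde F(v,w)\le c\}$ degenerates to the single point $w=0$. In the classical $C^2$ Hilbert setting this continuity is supplied by the splitting lemma, unavailable here; its role is taken over by the combination of the uniqueness of $\psi(v)$, the $(S)_+$ property of $f'$ furnishing compactness (Theorem~\ref{thm:split}, Proposition~\ref{prop:S+}), and the uniform positivity~\eqref{eq:QcoercW} of Lemma~\ref{lem:QconvW}, which quantifies the rate of the descent and thereby secures continuity across the degenerating fibers. The remaining ingredients---excision and the homotopy invariance of Alexander--Spanier cohomology---are routine.
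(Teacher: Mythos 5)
Your proposal is correct and follows essentially the same route as the paper's proof: the same minimality--mean-value sandwich (using that $\psi(z)$ and $\psi(z+v)$ are minimizers, plus continuity of $\psi$ and $f'$) for the $C^1$ property and formula~\eqref{eq:phiC1}, the same fiber-preserving (i.e.\ $V$-component--fixing) deformation onto the graph of $\psi$ built as in the Second Deformation Lemma from the $(S)_+$/Palais--Smale property and the uniqueness of the minimizer, and the same critical-point correspondence for the isolatedness claim. The only cosmetic difference is that you straighten the graph via $\Xi(v,w)=u_0+v+\psi(v)+w$ before deforming, where the paper deforms directly onto $Y=\{u_0+z+\psi(z)\}$ and cites \cite[Theorem 5.4]{cingolani_vannella2003-aihp} and \cite[Theorem~4.7]{lancelotti2002} for the construction you sketch by hand.
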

\begin{proof}
For any $v_0, v_1\in V\cap B_{\varrho}$, we have
\begin{alignat*}{3}
&\varphi(v_1) &&= f(u_0+v_1+\psi(v_1)) \\
&&& = f(u_0+v_0+\psi(v_1)) +
\langle f'(u_0+v_0+t(v_1-v_0)+\psi(v_1)),v_1-v_0\rangle \\
&&& \geq f(u_0+v_0+\psi(v_0)) +
\langle f'(u_0+v_0+t(v_1-v_0)+\psi(v_1)),v_1-v_0\rangle \\
&&& = \varphi(v_0)
+ \langle f'(u_0+v_0+t(v_1-v_0)+\psi(v_1)),v_1-v_0\rangle
\end{alignat*}
for some $t\in]0,1[$.
Since $\psi$ is continuous from $V\cap B_{\varrho}$
into $W^{1,p}_0(\Omega)$, it follows that
\[
\liminf_{\substack{(v_0,v_1)\to (z,z)\\ v_0\neq v_1}}\,
\dfrac{\varphi(v_1)-\varphi(v_0)-
\langle f'(u_0+z+\psi(z)),v_1-v_0\rangle}{\|v_1-v_0\|} \geq 0\,.
\]
We also have
\begin{alignat*}{3}
&\varphi(v_1) &&= f(u_0+v_1+\psi(v_1))
\leq f(u_0+v_1+\psi(v_0)) \\
&&& = f(u_0+v_0+\psi(v_0)) +
\langle f'(u_0+v_0+t(v_1-v_0)+\psi(v_0)),v_1-v_0\rangle \\
&&& = \varphi(v_0)
+ \langle f'(u_0+v_0+t(v_1-v_0)+\psi(v_0)),v_1-v_0\rangle
\end{alignat*}
for some $t\in]0,1[$, whence
\[
\limsup_{\substack{(v_0,v_1)\to (z,z)\\ v_0\neq v_1}}\,
\dfrac{\varphi(v_1)-\varphi(v_0)-
\langle f'(u_0+z+\psi(z)),v_1-v_0\rangle}{\|v_1-v_0\|} \leq 0\,.
\]
Therefore $\varphi$ is of class $C^1$ with
\[
\langle\varphi'(z),v\rangle =
\langle f'(u_0+z+\psi(z)),v\rangle\,.
\]
Since $\psi(0)=0$, we also have $\varphi'(0)=0$.
\par
Now consider
\[
Y = \left\{u_0+ z + \psi(z):\,\,
z \in V \cap B_{\rho} \right\}
\]
endowed with the $W^{1,p}_0(\Omega)$-topology.
Since $\left\{z\mapsto u_0+ z + \psi(z)\right\}$
is a homeomorphism from $V \cap B_{\rho}$ onto $Y$
which sends $0$ into $u_0$, it is clear that
\[
C_m(\varphi,0) \approx C_m(f\bigl|_Y,u_0)
\qquad\text{for any $m \geq 0$}\,.
\]
On the other hand, from
Proposition~\ref{prop:S+} and Theorem~\ref{thm:split}
we see that the functional $\left\{w\mapsto f(u_0+v+w)\right\}$
satisfies the Palais-Smale condition over $W\cap D_r$
for any $v\in V\cap B_{\varrho}$.
Moreover, $\psi(v)$ is the unique critical point, in fact
the minimum, of such a functional in $W\cap D_r$.
Arguing as in the Second Deformation Lemma, it is possible to
define a deformation
\[
\mathcal{H}:
\left(u_0+(V\cap B_{\varrho})+(W\cap D_r)\right)
\times[0,1] \rightarrow
\left(u_0+(V\cap B_{\varrho})+(W\cap D_r)\right)
\]
such that
\begin{align*}
& \mathcal{H}(u,t) - u \in W\,,\qquad
f(\mathcal{H}(u,t)) \leq f(u)\,,\\
&\mathcal{H}(u,1) \in Y\,,\qquad
\mathcal{H}(u,t)=u \qquad\text{if $u\in Y$}\,,
\end{align*}
whence
\[
H^m(f^c,f^c\setminus\{u_0\}) \approx
H^m(f^c\cap Y,(f^c\cap Y)\setminus\{u_0\})\,.
\]
This is proved in~\cite[Theorem 5.4]{cingolani_vannella2003-aihp}
in the case $p>2$, but the argument works also for
$1<p\leq 2$.
See also~\cite[Theorem~4.7]{lancelotti2002} in a nonsmooth setting.
\par
Therefore we have
\[
C_m(\varphi,0) \approx C_m(f\bigl|_Y,u_0)
\approx C_m(f,u_0)
\qquad\text{for any $m \geq 0$}\,.
\]
Since any critical point $u$ of $f$ in
$u_0+(V\cap B_{\varrho})+(W\cap D_r)$ must be of
the form $u=u_0+z+\psi(z)$ with $z\in V\cap B_{\varrho}$,
from~\eqref{eq:phiC1} we infer that $0$ is isolated for
$\varphi$ if and only if $u_0$ is isolated for $f$.
\end{proof}
\begin{thm}
\label{thm:phiC2}
Let $\kappa>0$ with $1<p<\infty$.
Then $\varphi$ is of class $C^2$ and
\begin{align}
\label{eq:phiC2}
\varphi''(z)[v]^2 &=
\int_{\Omega}\biggl\{\Psi''(\nabla u)[\nabla (v+\psi'(z)v)]^2
- D_sg(x,u)(v+\psi'(z)v)^2\biggr\}\,dx \\
\nonumber &
= \int_{\Omega}\biggl\{\Psi''(\nabla u)[\nabla v]^2
- D_sg(x,u)v^2\biggr\}\,dx \\
\nonumber &\qquad\qquad
- \int_{\Omega}\biggl\{\Psi''(\nabla u)[\nabla(\psi'(z)v)]^2
- D_sg(x,u)(\psi'(z)v)^2\biggr\}\,dx \\
\nonumber &\qquad\qquad\qquad
\text{for any $z\in V\cap B_{\varrho}$ and $v\in V$,
where $u=u_0+z+\psi(z)$}\,.
\end{align}
In particular, we have
\[
\varphi''(0)[v]^2 =
\int_{\Omega}\biggl\{\Psi''(\nabla u_0)[\nabla v]^2
- D_sg(x,u_0)v^2\biggr\}\,dx
\qquad\text{for any $v\in V$}\,.
\]
\end{thm}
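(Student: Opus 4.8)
The plan is to differentiate once more the identity~\eqref{eq:phiC1}, which gives $\langle\varphi'(z),v\rangle=\langle f'(u_0+z+\psi(z)),v\rangle$ for $z\in V\cap B_{\varrho}$ and $v\in V$. Two structural facts from the case $\kappa>0$ make this possible: the function $\Psi$ is of class $C^2$ on $\R^N$, and by Theorem~\ref{thm:parmin} the map $\psi$ is of class $C^1$ from $V\cap B_{\varrho}$ into $W^{1,2}_0(\Omega)$, with $\psi'(z)v$ characterized by~\eqref{eq:psi'}. Writing $u=u_0+z+\psi(z)$, I would first record that the map $\{v\mapsto u_0+v+\psi(v)\}$ takes values in a fixed bounded subset of $C^{1,\beta}(\overline\Omega)$ and is continuous into $C^1(\overline\Omega)$; hence $\nabla u$ ranges in a compact subset of $\R^N$, so that $\Psi''(\nabla u)$ is uniformly bounded and varies continuously (in the sup-norm) with $z$, while $D_sg(\cdot,u)$ is bounded and continuous in $L^\infty(\Omega)$ by $(g_2)$.

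To produce the second derivative I would compute the Gateaux derivative of $\varphi'$ directly from the increment formula~\eqref{eq:f'Lag}. Fixing $z\in V\cap B_{\varrho}$ and a direction $v'\in V$, I apply~\eqref{eq:f'Lag} with $v_0=z$, $v_1=z+sv'$ and test function $v\in V$, obtaining
\begin{equation*}
\frac{1}{s}\,\langle\varphi'(z+sv')-\varphi'(z),v\rangle
= \int_0^1\!\!\int_\Omega\Bigl\{\Psi''(\nabla\gamma_t^s)\bigl[\nabla\tfrac{\eta_s}{s},\nabla v\bigr]
- D_sg(x,\gamma_t^s)\,\tfrac{\eta_s}{s}\,v\Bigr\}\,dx\,dt\,,
\end{equation*}
where $\eta_s=sv'+\psi(z+sv')-\psi(z)$ and $\gamma_t^s$ lies on the segment joining $u$ to $u_0+(z+sv')+\psi(z+sv')$. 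As $s\to 0$, the latter endpoint tends to $u$ in $C^1(\overline\Omega)$, so $\gamma_t^s\to u$ in $C^1(\overline\Omega)$ \emph{uniformly} in $t\in[0,1]$, whence $\Psi''(\nabla\gamma_t^s)\to\Psi''(\nabla u)$ uniformly and $D_sg(\cdot,\gamma_t^s)\to D_sg(\cdot,u)$ in $L^\infty(\Omega)$; meanwhile $\eta_s/s\to v'+\psi'(z)v'$ strongly in $W^{1,2}_0(\Omega)$ by the $C^1$-regularity of $\psi$. Passing to the limit in the bounded bilinear form (estimate~\eqref{eq:Qcont}) yields that $\varphi'$ is Gateaux differentiable with
\begin{equation*}
\langle\varphi''(z)v',v\rangle
= \int_\Omega\Bigl\{\Psi''(\nabla u)\bigl[\nabla(v'+\psi'(z)v'),\nabla v\bigr]
- D_sg(x,u)\,(v'+\psi'(z)v')\,v\Bigr\}\,dx\,,
\end{equation*}
which for $v'=v$ is the first expression in~\eqref{eq:phiC2}.

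To upgrade Gateaux to $C^2$ I would use that $V$ is finite dimensional, so it suffices to verify that $z\mapsto\varphi''(z)$ is continuous; this follows because $z\mapsto\nabla u$ is continuous into $C^0(\overline\Omega)$ (hence $\Psi''(\nabla u)$ and $D_sg(\cdot,u)$ vary continuously) and $z\mapsto\psi'(z)$ is continuous, $\psi$ being of class $C^1$. The equivalence with the second expression in~\eqref{eq:phiC2} is then purely algebraic: expanding the square $[\nabla(v+\psi'(z)v)]^2$ and $(v+\psi'(z)v)^2$ separates a diagonal term in $v$, a cross term, and a term in $\psi'(z)v$, and inserting the admissible test function $w=\psi'(z)v\in\widetilde{W}\cap W^{1,2}_0(\Omega)$ into~\eqref{eq:psi'} shows that the cross term equals $-2$ times the $\psi'(z)v$ term, giving the stated difference of quadratic forms. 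Finally, since $\psi(0)=0$ and $\psi'(0)=0$ by Theorem~\ref{thm:parmin}, at $z=0$ one has $u=u_0$ and $\psi'(0)v=0$, which produces the formula for $\varphi''(0)[v]^2$.

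The step I expect to be the crux is the passage to the limit in the second paragraph, where one must control simultaneously the uniform convergence of the nonlinear coefficient $\Psi''(\nabla\gamma_t^s)$ and the strong $W^{1,2}_0$-convergence of the difference quotient $\eta_s/s$, and do so uniformly in the integration variable $t$. This is exactly where $\kappa>0$ is indispensable: it guarantees both that $\Psi$ is genuinely of class $C^2$ with $\Psi''$ continuous and bounded on compact sets, and that $\psi$ is differentiable into the Hilbert space $W^{1,2}_0(\Omega)$, so that the product of a uniformly convergent coefficient with a strongly convergent increment converges as required.
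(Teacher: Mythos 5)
Your argument is correct and is built from exactly the ingredients of the paper's proof: the increment identity~\eqref{eq:f'Lag}, the variational characterization~\eqref{eq:psi'} of $\psi'(z)$, the continuity of $\{v\mapsto v+\psi(v)\}$ into $C^1(\overline\Omega)$ together with the uniform $C^{1,\beta}$ bound, and finally $\psi(0)=\psi'(0)=0$. The only structural difference is how differentiability of $\varphi'$ is obtained. You take a one-variable limit ($v_0=z$, $v_1=z+sv'$), i.e.\ a Gateaux derivative, and then invoke the standard upgrade ``Gateaux differentiable with continuous derivative map $\Rightarrow C^1$'', which obliges you to prove separately that the candidate second derivative depends continuously on $z$. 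The paper instead estimates the two-variable difference $\langle\varphi'(v_1)-\varphi'(v_0)-L_z(v_1-v_0),v\rangle$, normalized by $\|\nabla(v_1-v_0)\|_2$, as $(v_0,v_1)\to(z,z)$; this strict differentiability statement follows from the very same three-term decomposition you would write down (it is displayed in the paper's proof) and yields $C^2$ in one pass, with no separate continuity step. So the two routes are the same computation packaged differently, the paper's packaging being slightly more economical. Your algebraic reduction of the first expression in~\eqref{eq:phiC2} to the second, by inserting $w=\psi'(z)v$ into~\eqref{eq:psi'}, is exactly the paper's.

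One claim needs repair, though it does not affect the structure: the assertion that $D_sg(\cdot,\gamma_t^s)\to D_sg(\cdot,u)$ in $L^{\infty}(\Omega)$ does not follow from $(g_1)$--$(g_2)$. Those hypotheses give boundedness of $D_sg(x,s)$ for bounded $s$ and continuity of $D_sg(x,\cdot)$ for a.e.\ \emph{fixed} $x$, with no uniformity in $x$; for instance, with $D_sg(x,s)=\sin(s/x)$ on $\Omega=]0,1[$ (which is admissible), the functions $u_k\equiv 1/k$ converge uniformly to $0$ while $\|D_sg(\cdot,u_k)-D_sg(\cdot,0)\|_{\infty}=1$ for every $k$. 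What your limit passage actually needs is weaker and true: since $\gamma_t^s\to u$ uniformly, one has $D_sg(x,\gamma_t^s)\to D_sg(x,u)$ a.e.\ with a uniform $L^{\infty}$ bound, so dominated convergence, combined with the strong $L^2$ convergence of $\eta_s/s$ and the fact that $v\in V\subseteq L^{\infty}(\Omega)$, passes the limit under the integral; the same substitution is needed in your continuity argument for $z\mapsto\varphi''(z)$. By contrast, your claim that $\Psi''(\nabla\gamma_t^s)\to\Psi''(\nabla u)$ uniformly is sound, since for $\kappa>0$ the map $\Psi''$ is uniformly continuous on the compact set in which all the gradients take their values.
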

\begin{proof}
By Theorem~\ref{thm:parmin}, the map $\psi$ is of class~$C^1$ from
$V\cap B_{\varrho}$ into $W^{1,2}_0(\Omega)$ with $\psi(0)=0$ and
$\psi'(0)=0$.
For any $z\in V\cap B_{\varrho}$, let $L_z:V \rightarrow V'$
be the linear map defined by
\begin{multline}
\label{eq:psi''}
\langle L_zv_1,v_2\rangle =
\int_{\Omega} \biggl\{\Psi''(\nabla u)
[\nabla v_1,\nabla v_2]
- D_sg(x,u) v_1 v_2\biggr\}\,dx\\
+ \int_{\Omega} \biggl\{\Psi''(\nabla u)
[\nabla(\psi'(z)v_1),\nabla v_2]
- D_sg(x,u) (\psi'(z)v_1) v_2\biggr\}\,dx \,,
\end{multline}
where $u=u_0+z+\psi(z)$.
By~\eqref{eq:f'Lag}, for every $v_0, v_1\in V\cap B_{\varrho}$ and
$v\in V$, we have
\begin{alignat*}{3}
&\langle\varphi'(v_1),v\rangle - \langle\varphi'(v_0),v\rangle &&=
\langle f'(u_0+v_1+\psi(v_1)),v\rangle
- \langle f'(u_0+v_0+\psi(v_0)),v\rangle \\
&&& = \int_0^1 \int_{\Omega} \biggl\{\Psi''(\nabla \gamma_t)
[\nabla(v_1-v_0+\psi(v_1)-\psi(v_0)),\nabla v] \biggr. \\
&&& \qquad\qquad\qquad
- \biggl.
D_sg(x,\gamma_t) (v_1-v_0+\psi(v_1)-\psi(v_0)) v\biggr\}\,dx\,dt\,,
\end{alignat*}
where $\gamma_t=u_0+v_0+\psi(v_0)+t(v_1-v_0+\psi(v_1)-\psi(v_0))$.
Taking into account~\eqref{eq:psi''}, we deduce that
\[
\begin{split}
\langle\varphi'(v_1),v\rangle & - \langle\varphi'(v_0),v\rangle
- \langle L_z(v_1-v_0),v\rangle \\
& =
\int_0^1 \int_{\Omega} \biggl\{\Psi''(\nabla \gamma_t)
[\nabla(v_1-v_0+\psi(v_1)-\psi(v_0)),\nabla v] \biggr. \\
& \qquad\qquad\qquad\qquad\qquad\qquad
- \biggl.
D_sg(x,\gamma_t) (v_1-v_0+\psi(v_1)-\psi(v_0)) v\biggr\}\,dx\,dt \\
& \qquad
- \int_{\Omega} \biggl\{\Psi''(\nabla u)
[\nabla (v_1-v_0),\nabla v]
- D_sg(x,u) (v_1-v_0) v\biggr\}\,dx  \\
& \qquad
- \int_{\Omega} \biggl\{\Psi''(\nabla u)
[\nabla(\psi'(z)(v_1-v_0)),\nabla v]
- D_sg(x,u) (\psi'(z)(v_1-v_0)) v\biggr\}\,dx \,.
\end{split}
\]
It follows
\[
\begin{split}
\langle\varphi'(v_1),v\rangle & - \langle\varphi'(v_0),v\rangle
- \langle L_z(v_1-v_0),v\rangle \\
& =
\int_0^1 \int_{\Omega} \biggl\{
\left[\Psi''(\nabla \gamma_t) - \Psi''(u)\right]
[\nabla(v_1-v_0),\nabla v] \biggr.  \\
& \qquad\qquad\qquad
- \biggl.
\left[D_sg(x,\gamma_t) - D_sg(x,u)\right]
(v_1-v_0) v\biggr\}\,dx\,dt \\
& \qquad
+ \int_0^1 \int_{\Omega} \biggl\{\Psi''(\nabla \gamma_t)
[\nabla(\psi(v_1)-\psi(v_0)-\psi'(z)(v_1-v_0)),\nabla v] \biggr. \\
& \qquad\qquad\qquad\qquad
- \biggl.
D_sg(x,\gamma_t) (\psi(v_1)-\psi(v_0)-\psi'(z)(v_1-v_0)) v
\biggr\}\,dx\,dt \\
& \qquad
+ \int_0^1 \int_{\Omega} \biggl\{
\left[\Psi''(\nabla \gamma_t) - \Psi''(u)\right]
[\nabla(\psi'(z)(v_1-v_0)),\nabla v] \biggr.  \\
& \qquad\qquad\qquad\qquad
- \biggl.
\left[D_sg(x,\gamma_t) - D_sg(x,u)\right]
(\psi'(z)(v_1-v_0)) v
\biggr\}\,dx\,dt \,.
\end{split}
\]
Since the map $\left\{v\mapsto v+\psi(v)\right\}$
is continuous from $V\cap B_{\varrho}$ into
$C^1(\overline\Omega)$, we infer that
\[
\lim_{\substack{(v_0,v_1)\to (z,z)\\ v_0\neq v_1}}\,
\dfrac{\langle\varphi'(v_1),v\rangle - \langle\varphi'(v_0),v\rangle
- \langle L_z(v_1-v_0),v\rangle}{
\|\nabla(v_1-v_0)\|_2} = 0
\qquad\text{for any $v\in V$}\,.
\]
Therefore $\varphi$ is of class~$C^2$ and
\begin{multline*}
\varphi''(z)[v_1,v_2] =
\int_{\Omega} \biggl\{\Psi''(\nabla u)
[\nabla v_1,\nabla v_2]
- D_sg(x,u) v_1 v_2\biggr\}\,dx \\
+ \int_{\Omega} \biggl\{\Psi''(\nabla u)
[\nabla(\psi'(z)v_1),\nabla v_2)]
- D_sg(x,u) (\psi'(z)v_1) v_2\biggr\}\,dx \,.
\end{multline*}
By Theorem~\ref{thm:parmin}, we also have
\begin{multline*}
\int_{\Omega} \left\{\Psi''(\nabla u)[\nabla (\psi'(z)v),\nabla w]
- D_sg(x,u) (\psi'(z)v) w \right\}\,dx \\
= - \int_{\Omega} \left\{\Psi''(\nabla u)[\nabla v,\nabla w]
- D_sg(x,u) v w  \right\}\,dx \\
\qquad\text{for any $v\in V$ and 
$w\in \widetilde{W}\cap W^{1,2}_0(\Omega)$}\,,
\end{multline*}
whence~\eqref{eq:phiC2}.
\par
Since $\psi(0)=0$ and $\psi'(0)=0$, the formula for
$\varphi''(0)$ follows.
\end{proof}
%


\section{Proof of the results of Section~\ref{sect:critical groups}}
\label{sect:proofcrit}
\noindent
\emph{Proof of Theorems~\ref{thm:zero}, \ref{thm:u=0}, 
\ref{thm:positivo}, \ref{thm:cornerstone} and~\ref{thm:positivo2}.}
\par\noindent
From Theorem~\ref{thm:phiC1} we know that
\[
C_m(f,u_0) \approx C_m(\varphi,0)
\qquad\text{for any $m \geq 0$}\,.
\]
Since the critical groups are defined using
Alexander-Spanier cohomology, it is clear that
$C_m(\varphi,0)=\{0\}$ whenever $m > \dim V = m^*(f,u_0)$,
both in the case $\kappa>0$ with $1<p<\infty$
and in the case $\kappa=0$ with $1<p<2$.
\par
In the particular case $u_0=0$ with $\kappa=0$ and $1<p<2$,
we clearly have $Z_{u_0}=\Omega$ and $X_{u_0}=\{0\}$, 
whence $m(f,0)=m^*(f,0)=0$, $V=\{0\}$
and $W=W^{1,p}_0(\Omega)$.
From Theorem~\ref{thm:parmin} it follows that $0$ is a
strict local minimum and an isolated critical point of~$f$.
By the excision property, it follows
\[
C_m(f,0) \approx
H^m \left(\{0\},\emptyset\right) \,,
\]
whence
\[
\left\{
\begin{array}{ll}
C_m(f,0) \approx \G
&\text{if $m = 0$}\,,\\
\noalign{\medskip}
C_m(f, 0) = \{0\}
&\text{if $m \neq 0$}\,.
\end{array}
\right.
\]
Now assume that $\kappa>0$ with $1<p<\infty$.
From Theorem~\ref{thm:phiC2} and
Proposition~\ref{prop:decomposition}
we infer that $\varphi$ is of class~$C^2$ with
\[
\varphi''(0)[v]^2 = Q_{u_0}(v) \leq 0
\qquad\text{for any $v\in V$}\,.
\]
Let $V_-$ be a subspace of $X_{u_0}=W^{1,2}_0(\Omega)$
of dimension $m(f,u_0)$ such that $Q_{u_0}$ is
negative definite on $V_-$.
Then it is easily seen that $Q_{u_0}$ is negative
definite also on $P_V(V_-)$, which has the same dimension
of $V_-$.
Therefore we may assume, without loss of generality, that
$V_- \subseteq V$ and we have
\[
\varphi''(0)[v]^2 = Q_{u_0}(v) < 0
\qquad\text{for any $v\in V_-\setminus \{0\}$}\,.
\]
It follows (see e.g.~\cite[Theorem~3.1]{lancelotti2002})
that $C_m(\varphi,0)=\{0\}$ whenever
$m<\dim V_-=m(f,u_0)$.
The proof of Theorems~\ref{thm:zero}, \ref{thm:u=0} 
and~\ref{thm:positivo} is complete.
\par
If $m(f,u_0)=m^*(f,u_0)$, we have $V_-=V$.
Then $0$ is a nondegenerate critical point of~$\varphi$
with Morse index $\dim V=m(f,u_0)$.
It follows that $0$ is an isolated critical point of
$\varphi$ and
\[
C_m(f,u_0) \approx C_m(\varphi,0) \approx
\delta_{m, m(f,u_0)} \G\,.
\]
Moreover, $u_0$ is an isolated critical point of $f$
by Theorem~\ref{thm:phiC1} and Theorem~\ref{thm:cornerstone}
follows.
\par
Finally, assume that $u_0$ is an isolated critical point
of~$f$ with $m(f,u_0)<m^*(f,u_0)$.
By Theorem~\ref{thm:phiC1} we infer that $0$ is an isolated
critical point of $\varphi$ and Theorem~\ref{thm:positivo2}
follows from \cite[Corollary~8.4]{mawhin_willem1989}.
\qed
\par\bigskip
\noindent
\emph{Proof of Theorem~\ref{thm:general}.}
\par\noindent
By Theorem~\ref{thm:positivo}, Remark~\ref{rem:p2} 
and Theorem~\ref{thm:u=0}, we have 
only to treat the case $\kappa=0$ with $p > 2$, so that
\[
Q_{u_0}(v) = Q_0(v) = - \int_\Omega g'(0)v^2\,dx
\qquad\forall v\in W^{1,2}_0(\Omega)\,.
\]
If $g'(0)=0$, we have $m(f,0)=0$, $m^*(f,0)=+\infty$
and the assertion is obvious.
\par
If $g'(0)<0$, we have $m(f,0)=m^*(f,0)=0$.
On the other hand, it is easily seen that
\[
f:W^{1,p}_0(\Omega)\cap C^1(\overline{\Omega})\rightarrow\R
\]
is strictly convex in a neighborhood of $0$ for the
$C^1(\overline{\Omega})$-topology.
In particular, $0$ is a strict local minimum for the
$C^1(\overline{\Omega})$-topology.
From Theorem~\ref{thm:locminW1p} we infer that $0$ is a strict 
local minimum of
\[
f:W^{1,p}_0(\Omega)\rightarrow\R
\]
for the $W^{1,p}_0(\Omega)$-topology.
By the excision property we have
\[
C_m(f,0) \approx
H^m \left(\{0\},\emptyset\right) 
\]
and the assertion follows.
\par
If $g'(0)>0$, we have $m(f,0)=m^*(f,0)=+\infty$.
If $p>N$, the functional $f$ is of class~$C^2$ 
on $W^{1,p}_0(\Omega)$ with
\[
f''(0)(v)^2 = - \int_\Omega g'(0)v^2\,dx
\qquad\forall v\in W^{1,p}_0(\Omega)\,.
\]
From \cite[Theorem~3.1]{lancelotti2002} we infer that
$C_m(f,0)=\{0\}$ for any $m$ and the assertion follows.
\par
If $p\leq N$, recall that
\[
|g(s)| \leq  C( 1+ |s|^{q})
\]
with $q<p^*-1$ if $p<N$, and consider a
$C^\infty$-function $\vartheta:\R\rightarrow[0,1]$ 
with $\vartheta(s)=1$ for $|s|\leq 1$
and $\vartheta(s)=0$ for $|s|\geq 2$.
Then define, for every $t\in[0,1]$, a $C^1$-functional 
$f_t:W^{1,p}_0(\Omega)\rightarrow\R$ as
\[
f_t(u) = 
\int_{\Omega} \Psi(\nabla u)\,dx
- \int_{\Omega} G_t(u)\,dx \,,
\]
where
\[
g_t(s) = g(\vartheta(ts)s)\,,\qquad
G_t(s) = \int_0^s g_t(\sigma)\,d\sigma\,.
\]
For any $t\in]0,1]$ the functional $f_t$ is of 
class $C^2$ with
\[
f_t''(0)(v)^2 = - \int_\Omega g'(0)v^2\,dx
\qquad\forall v\in W^{1,p}_0(\Omega)\,.
\]
Again from \cite[Theorem~3.1]{lancelotti2002} we infer that
$C_m(f_t,0)=\{0\}$ for any $t\in]0,1]$ and any $m$.
\par
Let $r>0$ be such that $0$ is the unique critical point
of $f_0=f$ in
\[
D_r =\left\{u\in W^{1,p}_0(\Omega):\,\,
\|\nabla u\|_p\leq r\right\}
\]
and such that the assertion of
Theorem~\ref{thm:regLinfty} holds for 
\[
\hat{g}(s) = C|s|^{q-1}s\,.
\]
Then the map $\left\{t\mapsto f_t\right\}$ is continuous
from $[0,1]$ into $C^1(D_r)$.
Moreover from~\cite[Theorem~3.5]{almi_degiovanni2013}
we infer that $f_t'$ is of class~$(S)_+$,
so that $f_t$ satisfies the Palais-Smale condition
over $D_r$, for any $t\in [0,1]$.
\par
We claim that there exists $\overline{t}\in]0,1]$
such that $0$ is the unique critical point of $f_t$
in $D_r$ whenever $0\leq t\leq\overline{t}$.
Assume, for a contradiction, that $t_k\to 0$
and $u_k\in D_r\setminus\{0\}$ is a critical point 
of $f_{t_k}$.
Then, for every $v\in W^{1,p}_0(\Omega)$ with
$v u_k\geq 0$, we have
\[
\begin{split}
\int_\Omega \nabla\Psi(\nabla u_k)\cdot\nabla v\,dx &=
\int_\Omega g_{t_k}(u_k)v\,dx\\
&\leq
\int_{\{u_k\neq 0\}} |g(\vartheta(t_ku_k)u_k)|\,|v|\,dx\\
&=
\int_{\{u_k\neq 0\}} \frac{|g(\vartheta(t_ku_k)u_k)|}{|u_k|}
\,u_kv\,dx\\
&\leq
\int_{\{u_k\neq 0\}} \frac{C(1+|u_k|^q)}{|u_k|}
\,u_kv\,dx \\
&=
\int_{\{u_k\neq 0\}} C\frac{u_k}{|u_k|}\,v\,dx 
+ \int_{\Omega} C|u_k|^{q-1}u_kv\,dx \,.
\end{split}
\]
It follows
\[
\int_\Omega \left[
\nabla\Psi(\nabla u_k)\cdot\nabla v - \hat{g}(u_k)v\right]\,dx 
\leq \langle \hat{w}_k,v\rangle \,,
\]
where
\[
\hat{w}_k=\left\{
\begin{array}{ll}
\displaystyle{
C\frac{u_k}{|u_k|}} &\text{where $u_k\neq 0$}\,,\\
0 &\text{where $u_k=0$}\,.
\end{array}
\right. 
\]
From Theorem~\ref{thm:regLinfty} we infer that
$(u_k)$ is bounded in $L^\infty(\Omega)$,
so that $\vartheta(t_ku_k)=1$ eventually as $k\to\infty$.
Then $u_k$ is a critical point of $f$ and a contradiction
follows.
\par
From~\cite[Theorem~5.2]{corvellec_hantoute2002} we deduce
that $C_m(f,0)\approx C_m(f_{\overline{t}},0)$
(for related results, see also
\cite[Theorem~I.5.6]{chang1993},
\cite[Theorem~3.1]{cingolani_degiovanni2009}
and \cite[Theorem~8.8]{mawhin_willem1989})
and the assertion follows.
\qed


\section{Proof of the main results}
\label{sect:proofs}
In this last section we prove the main results stated in the 
Introduction.
Let us recall some variants of the results 
of~\cite{cingolani_degiovanni2005} suited for our purposes.
We start with a saddle theorem, where linear subspaces are
substituted by symmetric cones.
\begin{thm}
\label{thm:saddle}
Let $X$ be a real Banach space and let $X_-, X_+$ be two
symmetric cones in~$X$ such that $X_+$ is closed in $X$,
$X_- \cap X_+ = \{0\}$ and such that
\[
\mathrm{Index}(X_-\setminus\{0\}) = 
\mathrm{Index}(X\setminus X_+) < +\infty\,.
\]
Let $r>0$ and let
\[
D_-=\left\{u\in X_-:\,\|u\| \leq r\right\} \,, \qquad
S_-=\left\{u\in X_-:\,\|u\| = r\right\} \,.
\]
Let $f:X\rightarrow\R$ be a function of class $C^1$ such that
\begin{gather*}
\inf_{X_+} f>-\infty\,,\qquad
\sup_{D_-} f < +\infty\,,\\
\text{if $X_-\neq \{0\}$, we have 
$f(u) < \inf_{X_+} f$ whenever $u\in S_-$}\,.
\end{gather*}
Set
\[
a= \inf_{X_+} f\,,\qquad
b= \sup_{D_-} f \,,\qquad
m= \mathrm{Index}(X_-\setminus\{0\})
\]
and assume that every sequence $(u_n)$ in $X$, with
\[
\text{$f(u_n)\to c\in[a,b]$ and 
$(1+\|u_n\|)\|f'(u_n)\|\to 0$}\,,
\]
admits a convergent subsequence 
(\emph{Cerami-Palais-Smale condition}) and that 
$f^{-1}([a,b])$ contains a finite
number of critical points.
\par
Then there exists a critical point $u$ of $f$ with
$a\leq f(u)\leq b$ and $C_m(f,u)\neq \{0\}$.
\end{thm}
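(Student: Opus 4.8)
The plan is to reduce the statement to a single topological linking fact about a pair of sublevels and then to read off the nontrivial critical group from the Morse relations recalled in the Introduction. I would work throughout with $\Z_2$-coefficients, which is the natural choice since $\mathrm{Index}$ is the $\Z_2$-cohomological index of Fadell and Rabinowitz; the nonvanishing of $C_m(f,u;\Z_2)$ of course gives $C_m(f,u)\neq\{0\}$. First I would record the elementary geometric consequences of the hypotheses. As $X_-$ and $X_+$ are cones, $0\in X_-\cap X_+$, so $0\in D_-$ and $0\in X_+$; hence
\[
a=\inf_{X_+}f\leq f(0)\leq\sup_{D_-}f=b
\]
and the interval $[a,b]$ is nonempty. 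Moreover $f\geq a$ on $X_+$, so $\{f<a\}\subseteq X\setminus X_+$, while $D_-\subseteq\{f\leq b\}$ and, by the last hypothesis, $S_-\subseteq\{f<a\}$. These inclusions exhibit the disk--sphere pair $(D_-,S_-)$ mapping into the sublevel pair $(\{f\leq b\},\{f<a\})$, which is the geometric core of the argument.

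The main step, and the one I expect to be the hardest, is the homological linking
\[
H^m\bigl(\{f\leq b\},\{f<a\};\Z_2\bigr)\neq\{0\}\,.
\]
All three index hypotheses enter here. By monotonicity of $\mathrm{Index}$, the inclusion $\{f<a\}\subseteq X\setminus X_+$ gives $\mathrm{Index}(\{f<a\})\leq\mathrm{Index}(X\setminus X_+)=m$; on the other hand, the odd deformation retraction $(u,s)\mapsto\bigl[(1-s)+sr/\|u\|\bigr]u$ of $X_-\setminus\{0\}$ onto $S_-$ (which stays in the cone $X_-\setminus\{0\}$ because cones are closed under positive scaling) shows $\mathrm{Index}(S_-)=\mathrm{Index}(X_-\setminus\{0\})=m$, so $S_-\subseteq\{f<a\}$ yields $\mathrm{Index}(\{f<a\})\geq m$. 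Hence $\mathrm{Index}(\{f<a\})=m$ exactly. With this identity in hand, I would obtain the nontrivial relative class by verifying that the proof of the homological linking theorem \cite[Theorem~3.6]{cingolani_degiovanni2005} carries over when the closed subspace and its complement used there are replaced by the symmetric cones $X_+$ and $X_-$: the cone $D_-$ over $S_-$ provides the filling that makes $S_-$ bound in $\{f\leq b\}$, while the equality $\mathrm{Index}(\{f<a\})=m=\mathrm{Index}(X\setminus X_+)$ prevents the corresponding class from dying in $\{f<a\}$, forcing a nonzero image under the connecting homomorphism of the pair. The genuine obstacle is precisely that $\mathrm{Index}$ controls the $\Z_2$-\emph{equivariant} cohomology of the free $\Z_2$-space $\{f<a\}$, whereas the group above is \emph{ordinary} cohomology of a pair whose larger member $\{f\leq b\}$ contains the fixed point $0$; reconciling the two, and checking that the cone structure (rather than linearity) of $X_\pm$ suffices for the filling and the detecting argument, is where the real work lies. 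In the degenerate case $X_-=\{0\}$ one has $m=0$, and $\mathrm{Index}(\{f<a\})=0$ forces the free space $\{f<a\}$ to be empty, so that $H^0(\{f\leq b\},\emptyset;\Z_2)\neq\{0\}$ holds trivially.

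Finally I would invoke Morse theory. The Cerami--Palais--Smale condition on $f^{-1}([a,b])$ supplies the deformation lemmas underlying the Morse relations, and since $f^{-1}([a,b])$ contains only finitely many critical points these take the form
\[
\sum_{k}C_k\,t^k=\sum_{k}\beta_k\,t^k+(1+t)Q(t)\,,
\]
where $\beta_k=\dim_{\Z_2}H^k(\{f\leq b\},\{f<a\};\Z_2)$, the quantity $C_k=\sum_{a\leq f(u)\leq b}\dim_{\Z_2}C_k(f,u;\Z_2)$, and $Q(t)=\sum_k q_k t^k$ has nonnegative integer coefficients (see \cite[Theorem~I.4.3]{chang1993}). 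Comparing the coefficients of $t^m$ gives $C_m-\beta_m=q_m+q_{m-1}\geq 0$, whence $C_m\geq\beta_m>0$ by the linking step. Therefore at least one critical point $u$ with $a\leq f(u)\leq b$ satisfies $C_m(f,u;\Z_2)\neq\{0\}$, which is the desired conclusion.
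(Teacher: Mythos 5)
Your overall architecture coincides with the paper's: show that $(D_-,S_-)$ links $X_+$ cohomologically in dimension $m$ over $\Z_2$, then convert that, via deformation/Morse relations under the Cerami--Palais--Smale condition, into a critical point with $C_m(f,u)\neq\{0\}$. (The paper does both steps by citation: the linking is \cite[Theorems~2.7 and~2.8]{degiovanni_lancelotti2007}, and the Morse-theoretic conclusion is \cite[Theorem~5.2, Remark~5.3 and Theorem~7.5]{degiovanni2011}, after using \cite[Remark~4.4]{corvellec1999} to reduce the Cerami condition to the Palais--Smale condition for an auxiliary metric --- your appeal to Chang's Morse relations needs this same reduction, but that is a citable technicality.) The problem lies in your treatment of the linking step, which contains both an error and an acknowledged gap.

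The error: the pivotal identity $\idx{\{f<a\}}=m$ is not meaningful. The functional $f$ is not assumed to be even, so the sublevel set $\{f<a\}$ need not be invariant under $u\mapsto -u$; the Fadell--Rabinowitz $\Z_2$-cohomological index is defined only for symmetric sets (free $\Z_2$-spaces), and both monotonicity inequalities you invoke therefore have an undefined middle term. The only symmetric sets available are $X_\pm$, $X_-\setminus\{0\}$, $S_-$ and $D_-$; accordingly, the paper (following \cite{degiovanni_lancelotti2007}) states the linking as a purely topological property of the triple $(D_-,S_-,X_+)$, with no reference to $f$: the inclusion of pairs $(D_-,S_-)\hookrightarrow(X,X\setminus X_+)$ induces a map with nontrivial image on $H^m(\,\cdot\,;\Z_2)$. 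Granting that, the nonvanishing you want follows at once by naturality from the factorization
\[
(D_-,S_-)\subseteq(\{f\leq b\},\{f<a\})\subseteq(X,X\setminus X_+)\,,
\]
which uses exactly the inclusions $D_-\subseteq\{f\leq b\}$, $S_-\subseteq\{f<a\}$ and $\{f<a\}\subseteq X\setminus X_+$ that you derived correctly. The gap: this topological linking over cones --- the passage from equivariant index information to ordinary relative cohomology, with cones replacing linear subspaces --- is precisely the step you leave to ``verifying that \cite[Theorem~3.6]{cingolani_degiovanni2005} carries over,'' and which you yourself identify as ``where the real work lies.'' That work is the entire content of \cite[Theorems~2.7 and~2.8]{degiovanni_lancelotti2007} and is neither reproduced nor replaced in your proposal; as written, the central step of the proof is missing.
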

\begin{proof}
From~\cite[Theorems~2.7 and~2.8]{degiovanni_lancelotti2007}
we infer that $(D_-,S_-)$ links $X_+$ cohomologically
in dimension $m$ over $\Z_2$.
According 
to~\cite[Remark~4.4]{corvellec1999},
the Cerami-Palais-Smale condition is just the 
usual Palais-Smale condition with respect to an
auxiliary distance function.
Then the assertion follows from~\cite[Theorem~5.2, 
Remark~5.3 and Theorem~7.5]{degiovanni2011}.
\end{proof}
\begin{thm}
\label{thm:cones}
Let $(\lambda_m)$ be defined as in
the Introduction and let $m\geq 0$ be such that
$\lambda_m<\lambda_{m+1}$.
If we set
\[
\begin{array}{ll}
\left\{
\begin{array}{lll}
X_- &= 
&\displaystyle{\left\{u\in W^{1,p}_0(\Omega):\,\,
\int_\Omega |\nabla u|^p\,dx \leq 
\lambda_m \int_\Omega |u|^p\,dx\right\}} \\
\noalign{\medskip}
X_+ &= 
&\displaystyle{\left\{u\in W^{1,p}_0(\Omega):\,\,
\int_\Omega |\nabla u|^p\,dx \geq 
\lambda_{m+1} \int_\Omega |u|^p\,dx\right\}}
\end{array}
\right.
&\qquad\text{if $m\geq 1$}\,,\\
\noalign{\medskip}
\left\{
\begin{array}{lll}
X_- &= 
&\displaystyle{\left\{0\right\}} \\
\noalign{\medskip}
X_+ &= 
& W^{1,p}_0(\Omega)
\end{array}
\right.
&\qquad\text{if $m=0$}\,,
\end{array}
\]
then $X_-, X_+$ are two closed symmetric cones 
in $W^{1,p}_0(\Omega)$ such that $X_- \cap X_+ = \{0\}$ 
and such that
\[
\mathrm{Index}(X_-\setminus\{0\}) = 
\mathrm{Index}(W^{1,p}_0(\Omega)\setminus X_+) = m \,.
\]
\end{thm}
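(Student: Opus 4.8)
The plan is to separate the three elementary structural claims from the two genuine index computations, reducing the latter to a minimax analysis of $\mathcal{E}$ on the sphere $M$. First I would settle the algebra and topology. Each defining inequality is homogeneous of degree $p$ in $u$ and invariant under $u\mapsto -u$, since $|{-u}|=|u|$ and $|\nabla(-u)|=|\nabla u|$; hence $X_-$ and $X_+$ are symmetric cones, and they are closed because $u\mapsto\io|\nabla u|^p\,dx$ and $u\mapsto\io|u|^p\,dx$ are continuous on $\w$. If $u\in X_-\cap X_+$ with $u\neq 0$, then $\io|u|^p\,dx>0$ and
\[
\lambda_{m+1}\io|u|^p\,dx\le \io|\nabla u|^p\,dx\le \lambda_m\io|u|^p\,dx\,,
\]
forcing $\lambda_{m+1}\le\lambda_m$, against the hypothesis; thus $X_-\cap X_+=\{0\}$. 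The case $m=0$ is then immediate, since $X_-\setminus\{0\}=\emptyset$ and $\w\setminus X_+=\emptyset$ both have index $0$, while $\lambda_0=-\infty<\lambda_1$.

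Assume now $m\ge 1$. The radial retraction $u\mapsto u/\|u\|_p$ is odd, continuous, and preserves the Rayleigh quotient $R(u)=\io|\nabla u|^p\,dx\big/\io|u|^p\,dx$; since the cones are unions of rays, it provides an odd deformation retraction of $X_-\setminus\{0\}$ onto $\{v\in M:\mathcal{E}(v)\le\lambda_m\}$ and of $\w\setminus X_+$ onto $\{v\in M:\mathcal{E}(v)<\lambda_{m+1}\}$ (note $0\in X_+$, so the latter complement carries a free $\Z_2$-action). By the homotopy invariance and monotonicity of the cohomological index it therefore suffices to establish
\[
\idx{\{v\in M:\mathcal{E}(v)\le\lambda_m\}}=\idx{\{v\in M:\mathcal{E}(v)<\lambda_{m+1}\}}=m\,.
\]

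The two computations rest on a minimax reading of the definition of $(\lambda_m)$. Write $A^c=\{v\in M:\mathcal{E}(v)\le c\}$. If $\idx{A^c}\ge m+1$, then $A^c$ is admissible in the minimax defining $\lambda_{m+1}$ and $\sup_{A^c}\mathcal{E}\le c$, so $\lambda_{m+1}\le c$; equivalently, $c<\lambda_{m+1}$ implies $\idx{A^c}\le m$. Conversely, if $c>\lambda_m$, the definition of $\lambda_m$ furnishes a symmetric $A\subseteq M$ with $\idx{A}\ge m$ and $\sup_A\mathcal{E}<c$, whence $A\subseteq A^c$ and $\idx{A^c}\ge m$; thus the index equals $m$ on the whole interval $]\lambda_m,\lambda_{m+1}[$. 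Taking $c=\lambda_m$ in the first implication gives the free half $\idx{A^{\lambda_m}}\le m$, while choosing any $c\in]\lambda_m,\lambda_{m+1}[$ and using $A^c\subseteq\{v\in M:\mathcal{E}(v)<\lambda_{m+1}\}$ gives the free half $\idx{\{v\in M:\mathcal{E}(v)<\lambda_{m+1}\}}\ge m$.

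The two opposite inequalities remain, and these require the compactness of the problem. For the open sublevel I would use that a finite cohomological index is realized on a compact invariant subset: were $\idx{\{v\in M:\mathcal{E}(v)<\lambda_{m+1}\}}\ge m+1$, a compact symmetric $K$ with $\idx{K}\ge m+1$ would satisfy $\max_K\mathcal{E}<\lambda_{m+1}$, contradicting the definition of $\lambda_{m+1}$; hence this index is $\le m$, and so equal to $m$. For the closed sublevel I would prove $\idx{A^{\lambda_m}}\ge m$ through the continuity property of the index: choosing an invariant neighborhood $N\supseteq A^{\lambda_m}$ with $\idx{\overline{N}}=\idx{A^{\lambda_m}}$, the Palais--Smale condition for $\mathcal{E}|_M$ forces $A^{\lambda_m+\e}\subseteq N$ for some $\e>0$, after which $m\le\idx{A^{\lambda_m+\e}}\le\idx{\overline{N}}=\idx{A^{\lambda_m}}$. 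Combined with the reduction above, this yields $\idx{X_-\setminus\{0\}}=\idx{\w\setminus X_+}=m$. I expect the last step — the passage from sublevels just above $\lambda_m$ to the sublevel at the exact value $\lambda_m$ — to be the main obstacle: it is precisely where the Palais--Smale compactness of $\mathcal{E}|_M$ and the continuity of Alexander--Spanier cohomology under decreasing invariant limits genuinely enter, and it is the point at which one may instead invoke the index--eigenvalue framework of~\cite{cingolani_degiovanni2005, degiovanni2011}.
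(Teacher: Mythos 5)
Your structural reductions are all correct and unproblematic: the cone/symmetry/closedness claims, the intersection argument, the case $m=0$, the odd radial retractions of $X_-\setminus\{0\}$ and $\w\setminus X_+$ onto the sublevels of $\mathcal{E}$ on $M$, and the two ``free'' inequalities read off from the minimax definition ($\idx{A^c}\le m$ for $c<\lambda_{m+1}$ and $\idx{A^c}\ge m$ for $c>\lambda_m$, where $A^c=\{v\in M:\,\mathcal{E}(v)\le c\}$). Be aware, though, that the paper proves none of this directly: it declares $m=0$ obvious and cites \cite[Theorem~3.2]{degiovanni_lancelotti2007} for $m\ge 1$, so what you are really attempting is a reconstruction of that cited theorem. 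Your argument for the open sublevel is essentially sound, but the property you invoke --- that a finite cohomological index is realized on a compact invariant subset --- is not among the standard listed properties of the Fadell--Rabinowitz index (monotonicity, subadditivity, continuity); it is true for the sets at hand (an open subset of the $C^1$ Banach manifold $M$ and its free quotient are ANRs, so Alexander--Spanier and singular cohomology agree, and over the field $\Z_2$ a nonzero class pairs nontrivially with a singular homology class, which is carried by a compact subset), but this requires justification or a precise reference.

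The genuine gap is at your final step. The claim that the Palais--Smale condition ``forces $A^{\lambda_m+\e}\subseteq N$ for some $\e>0$'' does not follow from (PS) and is not true in the generality you use it. (PS) constrains only almost-critical sequences; a point $v$ with $\mathcal{E}(v)\le\lambda_m+\e$ lying outside $N$ carries no gradient information at all, and the closed invariant neighborhood $N$ supplied by the continuity property is an arbitrary neighborhood of the non-compact set $A^{\lambda_m}$: it may pinch at infinity, so it need not contain any full sublevel $A^{\lambda_m+\e}$, nor even a uniform $\delta$-neighborhood of $A^{\lambda_m}$. What (PS) actually gives is weaker: for $\e$ small, every critical point of $\mathcal{E}\bigl|_M$ with value in $\left]\lambda_m,\lambda_m+\e\right]$ lies in $\tint{N}$ (otherwise such critical points would form a (PS) sequence at level $\lambda_m$ converging to a critical point of $A^{\lambda_m}\subseteq\tint{N}$). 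The correct completion --- and the real content of \cite[Theorem~3.2]{degiovanni_lancelotti2007} --- is to use this fact to construct an odd pseudo-gradient deformation carrying $A^{\lambda_m+\e}$ \emph{into} $N$: outside $N$, in the strip $\{\lambda_m\le\mathcal{E}\le\lambda_m+\e\}$, the derivative is bounded away from zero by (PS), so trajectories either drop below level $\lambda_m$ (hence into $A^{\lambda_m}\subseteq N$) or enter $N$; one must then control re-entry/exit and uniformity of deformation times. Once an odd continuous map $h:A^{\lambda_m+\e}\to N$ exists, monotonicity yields $m\le\idx{A^{\lambda_m+\e}}\le\idx{N}=\idx{A^{\lambda_m}}$, which is your intended conclusion. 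In short: replace the false inclusion by a deformation; as written, the chain of inequalities breaks precisely at the step you yourself flag as the main obstacle.
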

\begin{proof}
If $m\geq 1$, the result is contained 
in~\cite[Theorem~3.2]{degiovanni_lancelotti2007}.
The case $m=0$ is obvious.
\end{proof}
Now let $f:W^{1,p}_0(\Omega)\to\R$ be the $C^1$-functional 
defined in~\eqref{eq:fmod} by setting
\[
f(u) = \int_{\Omega} \Psi_{p,\kappa}(\nabla u)\,dx
- \int_{\Omega} G(u)\,dx \,.
\]
\par\bigskip
\noindent
\emph{Proof of Theorem~\ref{thm:nonres}.}
\par\noindent
Let us show that $f$ satisfies the Cerami-Palais-Smale condition.
Let $(u_n)$ be a sequence in $W^{1,p}_0(\Omega)$ with
$f(u_n)$ bounded and $(1+\|u_n\|)\|f'(u_n)\| \to 0$,
so that
\begin{equation}
\label{eq:CPS}
\lim_n \,\langle f'(u_n),v-u_n\rangle = 0
\qquad\forall v\in W^{1,p}_0(\Omega)\,.
\end{equation}
First of all, let us show that $(u_n)$ is bounded in
$W^{1,p}_0(\Omega)$.
By contradiction, assume that $\|u_n\| \to \infty$
and set $z_n = \frac{u_n}{\|u_n\|}$.
Up to a subsequence, $z_n$ is convergent to some
$z$ weakly in $W^{1,p}_0(\Omega)$, strongly in $L^p(\Omega)$
and a.e. in~$\Omega$.
Since $\langle f'(u_n),z-z_n\rangle \to 0$,
dividing by $\|u_n \|^{p-1}$ and taking into
account~$(a)$, we get
\[
\lim_n \int_{\Omega} \left(\frac{\kappa^2}{\|u_n\|^2}+
|\nabla z_n|^2\right)^{\frac{p-2}{2}} \nabla z_n \cdot
\nabla(z - z_n) \, dx = 0 \,.
\]
By the convexity of $\Psi_{p,\kappa}$, it follows
\begin{multline*}
\limsup_n \int_\Omega |\nabla z_n|^p\,dx
\leq
\limsup_n \int_\Omega
\left(\frac{\kappa^2}{\|u_n\|^2}
+ |\nabla z_n|^2\right)^{\frac{p}{2}}\,dx \\
\leq
\lim_n \int_\Omega
\left(\frac{\kappa^2}{\|u_n\|^2}
+ |\nabla z|^2\right)^{\frac{p}{2}}\,dx =
\int_\Omega |\nabla z|^p\,dx \,,
\end{multline*}
so that $z_n \to z$ strongly in $W^{1,p}_0(\Omega)$
and $z\neq 0$.
\par
Given $v \in W^{1,p}_0(\Omega)$, we also have
$\langle f'(u_n),v\rangle \to 0$ whence,
dividing again by $\|u_n \|^{p-1}$,
\[
\lim_n \int_{\Omega} \left[\left(\frac{\kappa^2}{\|u_n\|^2}+
|\nabla z_n|^2\right)^{\frac{p-2}{2}} \nabla z_n \cdot
\nabla v 
- \frac{g(\|u_n\| z_n)}{\|u_n\|^{p-1}}\,v\right]\, dx = 0 \,.
\]
Taking again into account~$(a)$, we get
\[
\int_{\Omega} |\nabla z|^{p-2} \nabla z \cdot \nabla v \, dx =
\lambda \, \int_{\Omega} |z|^{p-2} z v \,dx 
\qquad\forall v \in W^{1,p}_0(\Omega)\,,
\]
which contradicts the assumption that
$\lambda \not\in\sigma(-\Delta_p)$.
Therefore $(u_n)$ is bounded in $W^{1,p}_0(\Omega)$, hence
convergent, up to a subsequence, to some
$u$ weakly in $W^{1,p}_0(\Omega)$.
\par
According 
to~\cite[Theorem~3.5]{almi_degiovanni2013}, the operator $f'$
is of class~$(S)_+$.
From~\eqref{eq:CPS} we infer that $(u_n)$ is strongly
convergent to $u$ in $W^{1,p}_0(\Omega)$.
\par
Now define $X_-, X_+$ according to Theorem~\ref{thm:cones}
with $m=m_\infty$, so that $X_-, X_+$ are 
two symmetric cones in $W^{1,p}_0(\Omega)$ satisfying
the assumptions of Theorem~\ref{thm:saddle} with
$\mathrm{Index}(X_-\setminus\{0\}) = m_\infty$.
Let us treat the case $m_\infty\geq 1$.
The case $m_\infty= 0$ is similar and simpler.
If
\[
\lambda_{m_\infty} 
< \alpha' < \alpha'' 
< \lambda 
< \beta' < \beta''  
< \lambda_{m_\infty +1}\,,
\]
taking into account assumption~$(a)$ we infer that there 
exists $C>0$ such that
\begin{alignat*}{7}
&\frac{\beta''}{p\lambda_{m_\infty +1}}
\,|\xi|^p - C &&\leq \Psi_{p,\kappa}(\xi) 
&&\leq \frac{\alpha'}{p\lambda_{m_\infty}}\,|\xi|^p +C
&&\qquad\forall \xi\in\R^N\,,\\
&\frac{\alpha''}{p}\,|s|^p - C &&\leq G(s) 
&&\leq \frac{\beta'}{p}\,|s|^p +C
&&\qquad\forall s\in\R\,.
\end{alignat*}
It easily follows that
\[
\inf_{X_+} f >-\infty\,,\qquad
\lim_{\substack{\|u\|\to\infty \\ u\in X_-}}\,f(u) = -\infty\,.
\]
In particular, there exists $r>0$ such that
\[
\forall u \in S_-:\,\,
f(u) < \inf_{X_+} f
\]
and, since $f$ is bounded on bounded subsets, we also have
$\sup\limits_{D_-} f < +\infty$.
\par
If $f$ has infinitely many critical points, we are done.
Otherwise, from Theorem~\ref{thm:saddle} we infer that
there exists a critical point $u$ of $f$ with
$C_{m_\infty}(f,u)\neq \{0\}$.
\par
Since $m_{\infty} \not\in [m(f,0),m^*(f,0)]$,
from Theorem~\ref{thm:general} we deduce that
$C_{m_\infty}(f,0) = \{0\}$.
Therefore $u\neq 0$ and the assertion follows.
\qed
\par\bigskip
In order to prove Theorem~\ref{thm:res}, we need an auxiliary
result.
\begin{prop}
\label{prop:res}
Let $\gamma\in\R$ and $\Gamma:\R\rightarrow\R$ be a function 
of class~$C^1$ such that
\begin{align*}
&\lim_{|s|\to\infty}\, \frac{\Gamma(s)}{|s|^p} = \gamma\,,\\
&\lim_{|s|\to\infty} \,
\left[p\Gamma(s) - s \Gamma'(s) \right]= +\infty \,.
\end{align*}
Then we have
\[
\lim_{|s|\to\infty}\, 
\left[\Gamma(s) - \gamma |s|^p \right]= +\infty \,.
\]
\end{prop}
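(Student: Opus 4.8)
The plan is to introduce the auxiliary function $h(s)=\Gamma(s)-\gamma|s|^p$ and to analyze the quotient $h(s)/|s|^p$, whose derivative turns out to be governed precisely by the quantity $p\Gamma(s)-s\Gamma'(s)$ appearing in the hypotheses. By symmetry it suffices to prove that $h(s)\to+\infty$ as $s\to+\infty$: the limit as $s\to-\infty$ follows by applying the same argument to $\widetilde{\Gamma}(\sigma)=\Gamma(-\sigma)$, $\sigma>0$, for which one checks that $\widetilde{\Gamma}(\sigma)/\sigma^p=\Gamma(-\sigma)/|-\sigma|^p\to\gamma$ and
\[
p\widetilde{\Gamma}(\sigma)-\sigma\widetilde{\Gamma}'(\sigma)
=p\Gamma(-\sigma)+\sigma\Gamma'(-\sigma)
=\bigl[p\Gamma(s)-s\Gamma'(s)\bigr]_{s=-\sigma}\to+\infty\,,
\]
so that $\widetilde{h}(\sigma)=\widetilde{\Gamma}(\sigma)-\gamma\sigma^p=h(-\sigma)$ also diverges to $+\infty$.

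For $s>0$ I would set $h(s)=\Gamma(s)-\gamma s^p$ and $K(s)=p\Gamma(s)-s\Gamma'(s)$, so that by assumption $K(s)\to+\infty$, while the first hypothesis gives $h(s)/s^p=\Gamma(s)/s^p-\gamma\to 0$. A direct computation shows that $s\,h'(s)-p\,h(s)=s\Gamma'(s)-p\Gamma(s)=-K(s)$, whence
\[
\frac{d}{ds}\left(\frac{h(s)}{s^p}\right)
=\frac{s\,h'(s)-p\,h(s)}{s^{p+1}}
=-\frac{K(s)}{s^{p+1}}\,.
\]
This identity is the heart of the matter: the divergence of $K$ forces $h/s^p$ to be strictly decreasing for large $s$, and combined with $h/s^p\to 0$ it will yield a diverging lower bound for $h$ itself.

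To make this quantitative, I would fix an arbitrary $M>0$ and choose $s_0$ with $K(s)\geq M$ for all $s\geq s_0$. Then, for any $T>s\geq s_0$, integrating the identity above gives
\[
\frac{h(T)}{T^p}-\frac{h(s)}{s^p}
=-\int_s^T\frac{K(t)}{t^{p+1}}\,dt
\leq -M\int_s^T\frac{dt}{t^{p+1}}
=-\frac{M}{p}\left(\frac{1}{s^p}-\frac{1}{T^p}\right)\,.
\]
Since $p>1$ the relevant integral converges at $+\infty$; letting $T\to+\infty$ and using $h(T)/T^p\to 0$ I obtain $h(s)/s^p\geq M/(p\,s^p)$, that is $h(s)\geq M/p$ for every $s\geq s_0$. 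As $M>0$ was arbitrary, this proves $h(s)\to+\infty$, completing the case $s\to+\infty$ and hence, via the symmetry noted above, the proposition.

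I do not expect a genuine obstacle here; the argument is a short comparison once $h/s^p$ is recognized as the correct object to differentiate. The only delicate point is the passage from ``$h$ is eventually bounded below'' to ``$h\to+\infty$'', which is exactly where the decay $h(s)/s^p\to 0$ from the first hypothesis is indispensable: integrating the differential inequality all the way to $+\infty$ converts the mere divergence of $K$ into a lower bound $M/p$ that grows without bound, rather than into positivity alone.
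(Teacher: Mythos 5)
Your proposal is correct and is essentially the paper's own argument: the paper shows that $\bigl(H(s)-M\bigr)/s^p$ is nonincreasing on the set where $pH(s)-sH'(s)\geq pM$ and then lets the upper endpoint tend to $+\infty$ using $H(s)/s^p\to 0$, which is exactly your integration of $\frac{d}{ds}\bigl(h(s)/s^p\bigr)=-K(s)/s^{p+1}$ followed by the limit $T\to+\infty$, up to a rescaling of $M$. The only cosmetic difference is that you spell out the reflection $s\mapsto -s$ for the limit at $-\infty$, which the paper dispatches as ``similar.''
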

\begin{proof}
Let $H(s)= \Gamma(s) -\gamma |s|^p$, so that
\begin{align*}
&\lim_{|s|\to\infty}\, \frac{H(s)}{|s|^p} = 0\,,\\
&\lim_{|s|\to\infty} \,
\left[p H(s) - s H'(s) \right]= +\infty \,.
\end{align*}
For every $M >0$, there exists $\overline{s} >0$ such that 
$p H(s) - s H'(s) \geq pM$ for any $s \geq \overline{s}$.
It follows 
\[
\biggl(\frac{H(s) - M}{s^p}\biggr)^\prime = 
\frac{s H'(s) - p H(s) + pM}{s^{p+1}} \leq 0
\qquad\forall s \geq \overline{s}\,,
\]
which implies that
\[
\frac{H(t)}{t^p}  -\frac{M}{t^{p}} 
\leq \frac{H(s)}{s^{p}}  - \frac{M}{s^{p}}
\qquad\text{whenever $t \geq s \geq \overline{s}$}\,.
\]
Passing to the limit as $t \to + \infty$, we get
\[
0 \leq \frac{H(s)}{s^p} - \frac{M}{s^{p}}
\qquad\forall s \geq \overline{s}\,,
\]
namely
\[
H(s) \geq M
\qquad\forall s \geq \overline{s}\,.
\]
Therefore
\[
\lim_{s\to+\infty}\,H(s) = +\infty\,.
\]
The limit as $s\to-\infty$ can be treated in a similar way.
\end{proof}
\par\bigskip
\noindent
\emph{Proof of Theorem~\ref{thm:res}.}
\par\noindent
Assume~$(b_-)$.
Let us show that $f$ satisfies the Cerami-Palais-Smale condition.
Let $(u_n)$ be a sequence in $W^{1,p}_0(\Omega)$ with
$f(u_n)$ bounded and $(1+\|u_n\|)\|f'(u_n)\| \to 0$.
First of all, let us show that $(u_n)$ is bounded in
$W^{1,p}_0(\Omega)$.
By contradiction, assume that $\|u_n\| \to \infty$
and set $z_n = \frac{u_n}{\|u_n\|}$.
Up to a subsequence, $z_n$ is convergent to some
$z$ weakly in $W^{1,p}_0(\Omega)$, strongly in $L^p(\Omega)$
and a.e. in~$\Omega$.
As in the proof of Theorem~\ref{thm:nonres},
we infer that $z_n \to z$ strongly in $W^{1,p}_0(\Omega)$
and $z\neq 0$.
\par
We also have
\[
\limsup_n \left|p f(u_n) - \langle f'(u_n),u_n\rangle \right|
<+\infty\,.
\]
Since
\[
p \Psi_{p,\kappa}(\xi) 
- \nabla\Psi_{p,\kappa}(\xi)\cdot\xi 
= \kappa^2 \left(\kappa^2+|\xi|^2\right)^{\frac{p-2}{2}}
- \kappa^p
\]
is bounded from below, we infer that
\[
\liminf_n 
\int_\Omega\left[pG(u_n)-g(u_n)u_n\right]\,dx > -\infty\,.
\]
On the other hand, there exists $C>0$ such that
\[
pG(s) - g(s)s \leq C \qquad\forall s\in\R
\]
whence, by Fatou's lemma,
\[
\int_\Omega \left\{\limsup_n 
\left[pG(u_n)-g(u_n)u_n\right]\right\}\,dx > -\infty\,.
\]
Since we have
\[
\lim_n\left[pG(u_n(x))-g(u_n(x))u_n(x)\right] = -\infty
\qquad\text{for a.e. $x\in\Omega$ with $z(x)\neq 0$}\,,
\]
we infer that $z=0$ a.e. in $\Omega$ and a contradiction follows.
\par
We conclude that the sequence $(u_n)$ is bounded, hence
convergent, up to a subsequence, to some
$u$ weakly in $W^{1,p}_0(\Omega)$.
As in the proof of Theorem~\ref{thm:nonres}, we get that 
$(u_n)$ is strongly convergent to $u$ in $W^{1,p}_0(\Omega)$.
\par
Now let 
\[
\lambda_{m_{\infty}} < \lambda \leq \lambda_{m_{\infty}+1}
\]
and define $X_-, X_+$ as in the proof of Theorem~\ref{thm:nonres}.
\par
We have
\[
\Psi_{p,\kappa}(\xi) \geq 
\frac{1}{p}\,|\xi|^p - \frac{1}{p}\,\kappa^p
\qquad\forall \xi\in\R^N
\]
and, by Proposition~\ref{prop:res},
\[
\lim_{|s|\to\infty}\, 
\left[G(s)-\frac{\lambda}{p}\,|s|^p\right] = -\infty\,.
\]
Therefore, there exists $C>0$ such that
\[
G(s) \leq 
\frac{\lambda}{p}\,|s|^p + C
\qquad\forall s\in\R\,.
\]
It easily follows that $\inf\limits_{X_+} f > -\infty$
and we conclude as in the proof of Theorem~\ref{thm:nonres}.
\par
Now assume~$(b_+)$, so that
\[
\lambda_{m_{\infty}} \leq \lambda < \lambda_{m_{\infty}+1}
\]
and either $1<p\leq 2$ with $\kappa\geq 0$ or 
$p>2$ with $\kappa=0$.
It follows that 
\[
p \Psi_{p,\kappa}(\xi) 
- \nabla\Psi_{p,\kappa}(\xi)\cdot\xi 
\]
is even bounded and the Cerami-Palais-Smale condition
can be proved as in the previous case.
\par
Now let us show that
\begin{equation}
\label{eq:lim-}
\lim_{\substack{\|u\|\to\infty \\ u\in X_-}}\,f(u) = -\infty\,.
\end{equation}
Let $u_n\in X_-$ with $\|u_n\|\to\infty$ and let
$z_n = \frac{u_n}{\|u_n\|}$.
Up to a subsequence, $(z_n)$ is convergent to some $z$
weakly in $W^{1,p}_0(\Omega)$, strongly in $L^p(\Omega)$
and a.e. in $\Omega$.
Since $z_n\in X_-$, we also have $z\neq 0$.
From Proposition~\ref{prop:res} we infer that
\[
\lim_{|s|\to\infty}\, 
\left[G(s)-\frac{\lambda}{p}\,|s|^p\right] = +\infty\,.
\]
In particular, there exists $C>0$ such that
\[
G(s) \geq 
\frac{\lambda}{p}\,|s|^p - C
\qquad\forall s\in\R\,.
\]
From Fatou's lemma we infer that
\[
\lim_n\, 
\int_\Omega\left[G(u_n)-\frac{\lambda}{p}\,|u_n|^p\right]
\,dx = +\infty\,.
\]
Since
\[
\Psi_{p,\kappa}(\xi) \leq \frac{1}{p}\,|\xi|^p 
\qquad\forall \xi\in\R^N\,,
\]
it follows that
\[
f(u_n) \leq 
\frac{1}{p}\, \int_{\Omega} \left[ |\nabla u_n|^p 
- {\lambda} |u_n|^p \right]\,dx 
- \int_{\Omega} \left[G(u_n) 
- \frac{\lambda}{p} |u_n|^p \right] \,dx\,,
\]
whence~\eqref{eq:lim-}.
Now we conclude as in the proof of Theorem~\ref{thm:nonres}.
\qed


%

\begin{thebibliography}{99}
%
\bibitem{abbondandolo_schwarz2009}
\textsc{A.~Abbondandolo and M.~Schwarz},
A smooth pseudo-gradient for the Lagrangian action functional,
\textit{Adv. Nonlinear Stud.} \textbf{9} (2009),
\textit{no.} 4, 597--623.
%
\bibitem{aftalion_pacella2004-tams}
\textsc{A.~Aftalion and F.~Pacella},
Morse index and uniqueness for positive solutions of radial
$p-$Laplace equations,
 \textit{Trans. Amer. Math. Soc.} \textbf{356} (2004),
\textit{no.} 11, 4255-4272.
%
\bibitem{almi_degiovanni2013}
\textsc{S.~Almi and M.~Degiovanni},
On degree theory for quasilinear elliptic equations with natural
growth conditions, in \textit{Recent Trends in Nonlinear Partial
Differential Equations II: Stationary Problems} (Perugia, 2012),
J.B.~Serrin, E.L.~Mitidieri and V.D.~R\u{a}dulescu eds., 1--20,
\textit{Contemporary Mathematics}, \textbf{595}, Amer. Math. Soc.,
Providence, R.I., 2013.
%
\bibitem{amann_zehnder}
\textsc{H.~Amann and E.~Zehnder},
Nontrivial solutions for a class of nonresonance problems and 
applications to nonlinear differential equations,
\textit{Ann. Scuola Norm. Sup. Pisa Cl. Sci. (4)} \textbf{7} (1980),
\textit{no.} 4, 539--603.
%
\bibitem{anane1987}
\textsc{A.~Anane},
Simplicit\'e et isolation de la premi\`ere valeur propre du
$p$-laplacien avec poids, 
\textit{C.~R. Acad. Sci. Paris S\'er. I Math.} \textbf{305} (1987), 
\textit{no.} 16, 725--728.
%
\bibitem{anane_tsuoli1996}
\textsc{A.~Anane and N.~Tsouli},
On the second eigenvalue of the $p$-Laplacian,
in \textit{Nonlinear Partial Differential Equations} (F\`es, 1994), 
A.~Benkirane, J.-P.~Gossez eds., 1--9, 
\textit{Pitman Res. Notes Math. Ser.}, \textbf{343}, 
Longman, Harlow, 1996.
%
\bibitem{brezis_nirenberg1993}
\textsc{H.~Brezis and L.~Nirenberg},
$H^1$ versus $C^1$ local minimizers,
\textit{C. R. Acad. Sci. Paris S\'er. I Math.} \textbf{317} (1993),
\textit{no.} 5, 465--472.
%
\bibitem{browder1983}
\textsc{F.E.~Browder},
Fixed point theory and nonlinear problems,
\textit{Bull. Amer. Math. Soc. (N.S.)} \textbf{9} (1983),
\textit{no.} 1, 1--39.
%
\bibitem{chang1981-cpam}
\textsc{K.C.~Chang},
Solutions of asymptotically linear operator equations via Morse 
theory,
\textit{Comm. Pure Appl. Math.} \textbf{34} (1981),
\textit{no.} 5, 693--712.
%
\bibitem{chang1983-cam}
\textsc{K.C.~Chang},
Morse theory on Banach space and its applications
to partial differential equations,
\textit{Chinese Ann. Math. Ser. B} \textbf{4} (1983),
\textit{no.} 3, 381--399.
%
\bibitem{chang1993}
\textsc{K.C.~Chang},
``Infinite-dimensional Morse theory and multiple solution
problems'',
\textit{Progress in Nonlinear Differential Equations and their
Applications}, \textbf{6}, Birkh\"auser, Boston, 1993.
%
\bibitem{chang1998}
\textsc{K.C.~Chang},
Morse theory in nonlinear analysis,
in \textit{Nonlinear Functional Analysis and Applications to
Differential Equations} (Trieste, 1997), 
A.~Ambrosetti, K.C.~Chang, I.~Ekeland eds., 60--101, 
World Sci. Publishing, River Edge, NJ, 1998.
%
\bibitem{cingolani_degiovanni2005}
\textsc{S.~Cingolani and M.~Degiovanni},
Nontrivial solutions for $p$-Laplace equations with right hand side
having $p$-linear growth at infinity,
\textit{Comm. Partial Differential Equations} \textbf{30} (2005),
\textit{no.} 8, 1191--1203.
%
\bibitem{cingolani_degiovanni2009}
\textsc{S.~Cingolani and M.~Degiovanni},
On the Poincar\'e-Hopf Theorem for functionals defined on Banach
spaces,
\textit{Adv. Nonlinear Stud.} \textbf{9} (2009),
\textit{no.} 4, 679-699.
%
\bibitem{cingolani_degiovanni_vannella2014}
\textsc{S.~Cingolani, M.~Degiovanni and G.~Vannella},
On the critical polynomial of functionals related to $p$-area
($1<p<\infty$) and $p$-Laplace ($1<p\leq 2$) type operators,
\textit{Atti Accad. Naz. Lincei Rend. Lincei Mat. Appl.},
\textbf{26} (2015), \textit{no.} 1, 49-–56.
%
\bibitem{cingolani_vannella2003-aihp}
\textsc{S.~Cingolani and  G.~Vannella},
Critical groups computations on a class of Sobolev Banach spaces
via Morse index,
\textit{Ann. Inst. H.~Poincar\'e Anal. Non Lin\'eaire} \textbf{20}
(2003), \textit{no.} 2, 271--292.
%
\bibitem{cingolani_vannella2003-bergamo}
\textsc{S.~Cingolani and  G.~Vannella},
Morse index computations for a class of functionals defined in
Banach spaces,
in \textit{Nonlinear Equations: Methods, Models and Applications}
(Bergamo, 2001), D.~Lupo, C.~Pagani and B.~Ruf, eds., 107--116,
\textit{Progr. Nonlinear Differential Equations Appl.}, \textbf{54},
Birkh\"auser, Basel, 2003.
%
\bibitem{cingolani_vannella2006}
\textsc{S.~Cingolani and G.~Vannella},
Morse index and critical groups for $p$-Laplace equations with
critical exponents,
\textit{Mediterr. J. Math.} \textbf{3} (2006),
\textit{no.} 3-4, 495--512.
%
\bibitem{cingolani_vannella2007}
\textsc{S.~Cingolani and  G.~Vannella},
Marino-Prodi perturbation type results and Morse indices of minimax
critical points for a class of functionals in Banach spaces,
\textit{Ann. Mat. Pura Appl. (4)} \textbf{186} (2007),
\textit{no.} 1, 157--185.
%
\bibitem{corvellec1999}
\textsc{J.-N.~Corvellec},
Quantitative deformation theorems and critical point theory,
\textit{Pacific J. Math.} \textbf{187} (1999),
\textit{no.} 2, 263--279.
%
\bibitem{corvellec_hantoute2002}
\textsc{J.-N.~Corvellec and A.~Hantoute},
Homotopical stability of isolated critical points of
continuous functionals,
\textit{Set-Valued Anal.} \textbf{10} (2002),
\textit{no.} 2-3, 143--164.
%
\bibitem{cuesta2001}
\textsc{M.~Cuesta},
Eigenvalue problems for the $p$-Laplacian with indefinite weights,
\textit{Electron. J. Differential Equations} \textbf{2001}, 
No. 33, 9 pp.
%
\bibitem{degiovanni2011}
\textsc{M.~Degiovanni},
On topological Morse theory,
\textit{J. Fixed Point Theory Appl.} \textbf{10} (2011),
\textit{no.} 2, 197–-218.
%
\bibitem{degiovanni_lancelotti2007}
\textsc{M.~Degiovanni and S.~Lancelotti},
Linking over cones and nontrivial solutions for $p$-Laplace
equations with $p$-superlinear nonlinearity,
\textit{Ann. Inst. H.~Poincar\'e Anal. Non Lin\'eaire}
\textbf{24} (2007), \textit{no.} 6, 907--919.
%
\bibitem{dibenedetto1983}
\textsc{E.~DiBenedetto},
$C^{1+\alpha}$ local regularity of weak solutions of degenerate
elliptic equations,
\textit{Nonlinear Anal.} \textbf{7} (1983),
\textit{no.} 8, 827--850.
%
\bibitem{drabek_robinson1999}
\textsc{P.~Dr\'abek and S.B.~Robinson},
Resonance problems for the $p$-Laplacian,
\textit{J. Funct. Anal.} \textbf{169} (1999), \textit{no.} 1, 
189--200.
%
\bibitem{fadell_rabinowitz1977}
\textsc{E.R.~Fadell, P.H.~Rabinowitz},
Bifurcations for odd potential operators and an alternative 
topological index,
\textit{J. Funct. Anal.} \textbf{26} (1977), 
\textit{no.} 1, 48--67.
%
\bibitem{fadell_rabinowitz1978}
\textsc{E.R.~Fadell, P.H.~Rabinowitz},
Generalized cohomological index theories for Lie group actions with 
an application to bifurcation questions for Hamiltonian systems,
\textit{Invent. Math.} \textbf{45} (1978), \textit{no.} 2, 139--174.
%
\bibitem{garciaazorero_peralalonso_manfredi2000}
\textsc{J.P.~Garc{\'{\i}}a Azorero, I.~Peral Alonso and
J.J.~Manfredi},
Sobolev versus H\"older local minimizers and global multiplicity
for some quasilinear elliptic equations,
\textit{Commun. Contemp. Math.} \textbf{2} (2000),
\textit{no.} 3, 385--404.
%
\bibitem{gromoll_meyer1969-t}
\textsc{D.~Gromoll and W.~Meyer},
On differentiable functions with isolated critical points,
\textit{Topology} \textbf{8} (1969), 361--369.
%
\bibitem{guedda_veron1989}
\textsc{M.~Guedda and L.~V{\'e}ron},
Quasilinear elliptic equations involving critical Sobolev exponents,
\textit{Nonlinear Anal.} \textbf{13} (1989),
\textit{no.} 8, 879--902.
%
\bibitem{guo_zhang2003}
\textsc{Z.~Guo and Z.~Zhang},
$W^{1,p}$ versus $C^1$ local minimizers and multiplicity results
for quasilinear elliptic equations,
\textit{J. Math. Anal. Appl.} \textbf{286} (2003),
\textit{no.} 1, 32--50.
%
\bibitem{ioffe1977-b}
\textsc{A.D.~Ioffe},
On lower semicontinuity of integral functionals. II,
\textit{SIAM. J. Control Optimization} \textbf{15} (1977),
\textit{no.} 6, 991--1000.
%
\bibitem{kyritsi_papageorgiou2005}
\textsc{S.T.~Kyritsi and N.~Papageorgiou},
Minimizers of nonsmooth functionals on manifolds and nonlinear
eigenvalue problems with constraints,
\textit{Publ. Math. Debrecen} \textbf{67} (2005),
\textit{no.} 3-4, 265--284.
%
\bibitem{ladyzhenskaya_uraltseva1968}
\textsc{O.A.~Ladyzhenskaya and N.N.~Ural'tseva},
``Linear and quasilinear elliptic equations'',
Nauka Press, Moscow, 1964. Academic Press, New York-London, 1968.
%
\bibitem{lancelotti2002}
\textsc{S.~Lancelotti},
Morse index estimates for continuous functionals associated with
quasilinear elliptic equations,
\textit{Adv. Differential Equations} \textbf{7} (2002),
\textit{no.} 1, 99--128.
%
\bibitem{lazer_solimini1988}
\textsc{A.C.~Lazer, S. Solimini},
Nontrivial solutions of operator equations and Morse indices of 
critical points of min-max type,
\textit{Nonlinear Anal.} \textbf{12} (1998),
\textit{no.} 8, 761--775.
%
\bibitem{lieberman1988}
\textsc{G.M.~Lieberman},
Boundary regularity for solutions of degenerate elliptic equations,
\textit{Nonlinear Anal.} \textbf{12} (1988),
\textit{no.} 11, 1203--1219.
%
\bibitem{lindqvist1990}
\textsc{P.~Lindqvist},
On the equation 
{$\mathrm{div}\,(\vert \nabla u\vert \sp
{p-2}\nabla u)+\lambda\vert u\vert \sp {p-2}u=0$},
\textit{Proc. Amer. Math. Soc.} \textbf{109} (1990), 
\textit{no.} 1, 157--164
and
\textbf{116} (1992), \textit{no.} 2, 583--584.
%
\bibitem{mawhin_willem1989}
\textsc{J.~Mawhin and M.~Willem},
``Critical point theory and Hamiltonian systems'',
\textit{Applied Mathematical Sciences}, \textbf{74},
Springer-Verlag, New York, 1989.
%
\bibitem{perera2003-tmna}
\textsc{K.~Perera},
Nontrivial critical groups in $p$-Laplacian problems via the Yang
index, 
\textit{Topol. Methods Nonlinear Anal.} \textbf{21} (2003),
\textit{no.} 2, 301--309.
%
\bibitem{simader1972}
\textsc{C.G.~Simader}, 
``On Dirichlet's boundary value problem'', 
\textit{Lecture Notes in Mathematics}, \textbf{268}, 
Springer-Verlag, Berlin-New York, 1972.
%
\bibitem{skrypnik1994}
\textsc{I.V.~Skrypnik},
``Methods for analysis of nonlinear elliptic boundary value
problems'',
\textit{Translations of Mathematical Monographs}, \textbf{139},
American Mathematical Society, Providence, RI, 1994.
%
\bibitem{spanier1966}
\textsc{E.H.~Spanier},
``Algebraic topology'',
McGraw-Hill Book Co., New York, 1966.
%
\bibitem{tolksdorf1983}
\textsc{P.~Tolksdorf},
On the Dirichlet problem for quasilinear equations in domains
with conical boundary points,
\textit{Comm. Partial Differential Equations} \textbf{8} (1983),
\textit{no.} 7, 773--817.
%
\bibitem{tolksdorf1984}
\textsc{P.~Tolksdorf},
Regularity for a more general class of quasilinear elliptic
equations,
\textit{J. Differential Equations} \textbf{51} (1984),
\textit{no.} 1, 126--150.
%
\bibitem{tromba1977}
\textsc{A.J.~Tromba},
A general approach to Morse theory,
\textit{J. Differential Geometry} \textbf{12} (1977),
\textit{no.} 1, 47--85.
%
\bibitem{uhlenbeck1972}
\textsc{K.~Uhlenbeck},
Morse theory on Banach manifolds,
\textit{J. Funct. Anal.} \textbf{10} (1972), 430--445.
%
\end{thebibliography}
\end{document}